\documentclass[journal,10pt]{IEEEtran}
\IEEEoverridecommandlockouts

%
\usepackage[cmex10]{amsmath}
\usepackage{amsfonts,amssymb}
\usepackage{amsthm}
\usepackage{tikz}
\usepackage{pgfplots}
\usepackage{bm}
\usepackage{array}
\usepackage[hidelinks]{hyperref}
\usepackage[normalem]{ulem}
\usepackage{cite}
\usepackage{xr}

\ifCLASSOPTIONcompsoc
 \usepackage[caption=false,font=normalsize,labelfont=sf,textfont=sf]{subfig}
\else
 \usepackage[caption=false,font=footnotesize]{subfig}
\fi

\hyphenation{op-tical net-works semi-conduc-tor}
\theoremstyle{plain}
\newtheorem{thm}{Theorem}
\newtheorem{lem}[thm]{Lemma}
\newtheorem{col}[thm]{Corollary}
\newtheorem{prop}[thm]{Proposition}

\newtheorem{assum}[thm]{Assumption}
\newtheorem{dfn}[thm]{Definition}

\newtheorem{rem}{Remark}

\usepackage{mydefs}
\usepackage{color,soul}
\usepackage{tabu,multirow}
\newcommand{\YT}[1]{{\color{blue} #1}}

\def\jm#1{\textcolor{black}{#1}}

\allowdisplaybreaks

\begin{document}

\title{Distributed Algorithms for Composite Optimization: Unified Framework and Convergence Analysis}

 \author{Jinming Xu$^{\dagger}$\thanks{$^\dagger$College of Control Science and Engineering, Zhejiang University, Hangzhou 310027, China. Email:~\texttt{jimmyxu@zju.edu.cn}.}, Ye Tian$^\ddagger$, Ying Sun$^\ddagger$, and Gesualdo Scutari$^\ddagger$\thanks{$^\ddagger$School of Industrial Engineering, Purdue University, West-Lafayette, IN, USA. Emails: \texttt{<tian110, sun578, gscutari>} \texttt{@purdue.edu.} This work has been supported by the  USA NSF
Grants CIF 1632599 and CIF 1564044; and the ARO  Grant
W911NF1810238.

Part of this work has been presented at IEEE CAMSAP 2019 \cite{ XuSunScutariJ,XUCAMSAP_arxiv}.
}\vspace{-0.6cm}}

\maketitle


\begin{abstract}
We study distributed composite  optimization over networks: agents minimize  a sum of smooth (strongly) convex  functions--the agents' sum-utility--plus a nonsmooth (extended-valued) convex one. We propose a  general {\it unified}  algorithmic framework for such a class of problems and provide a   unified convergence analysis leveraging   the theory of operator splitting. 
Distinguishing features of our scheme are: 
(i)  When the agents' functions are strongly convex, the algorithm converges at a {\it linear} rate,  whose dependence on the agents' functions   and   network topology is {\it decoupled}, matching the typical rates of centralized optimization; the rate expression   improves on existing results;  (ii) When the objective function is convex (but not strongly convex), similar separation as in (i) is established for the  coefficient of the proved sublinear rate; 
 (iii) The algorithm can adjust the ratio between the number of communications and computations to achieve a rate (in terms of computations) independent on the network connectivity; 
 and (iv)  
A by-product of our analysis is a tuning recommendation for several existing (non accelerated) distributed algorithms yielding the fastest provably (worst-case) convergence rate. 
This is the first time that a general distributed algorithmic framework  applicable to {\it composite} optimization enjoys all such properties.
\vspace{-0.4cm}
\end{abstract}

\IEEEpeerreviewmaketitle

\section{Introduction}

We study distributed multi-agent  optimization over networks, modeled as undirected static graphs. Agents aim at solving \vspace{-0.2cm}
\begin{equation}\label{prob:dop_nonsmooth_same}
\min_{x\in\mathbb{R}^{d}} F(x)+G(x),\quad F(x)\triangleq {\frac{1}{m}}\sum_{i=1}^m f_i(x),\tag{P}
\end{equation}
where $f_i:\mathbb{R}^d\to \mathbb{R}$ is the cost function of agent $i$, assumed to be $L$-smooth,  $\mu$-strongly convex (with $\mu\geq 0$), and known only to the agent; and $G:\mathbb{R}^d\to \mathbb{R}\cup\{-\infty, \infty\}$ is a  nonsmooth, convex (extended-value) function, which can be used to enforce shared constraints  or specific structures on the solution (e.g., sparsity).

The focus of this paper is the design of a {\it unified} (first-order) algorithmic framework for  Problem \eqref{prob:dop_nonsmooth_same} with provably  convergence rate. 
When $G=0$ and $\mu>0$,  several distributed schemes have been proposed in the literature that enjoy {\it linear} rate; examples include EXTRA~\cite{shi2015extra}, AugDGM~\cite{xu2015augmented}, NEXT~\cite{di2016next}, Harnessing~\cite{qu2017harnessing}, SONATA \cite{YingMAPR,sun2019convergence}, DIGing~\cite{nedich2016achieving},   NIDS~\cite{li2017decentralized},   Exact Diffusion~\cite{yuan2018exact_p1},  MSDA~\cite{scaman17optimal}, and the distributed algorithms in \cite{jakovetic2018unification,mansoori2019general}. When  $\mu=0$  and still $G=0$, a sublinear rate of $O(1/k)$ ($k$ counts the number of gradient evaluations) is   achieved by some of the above methods \cite{di2016next,qu2017harnessing,nedich2016achieving,xu2015augmented}  and   other primal-dual schemes, including  D-ADMM~\cite{wei2012distributed_admm}. Results for  $G\neq 0$ are relatively scarce; to our knowledge, the only two schemes achieving linear rate for \eqref{prob:dop_nonsmooth_same} are SONATA \cite{sun2019convergence} and the one in \cite{alghunaim2019linearly}; the former under the assumption that $F$ is strongly convex and the latter requiring each $f_i$ to be so. Sublinear rate of $O(1/k)$ has been proved for a variety of schemes, including  PG-EXTRA~\cite{shi2015proximal}, D-FBBS~\cite{xu2018bregman} and DPGA~\cite{aybat2017distributed}.  

{Although the aforementioned algorithms all achieve linear  or   sublinear convergence rates, they differ in the nature and strength of their convergence guarantees.}   No {\it unified} algorithmic design and convergence analysis can be inferred by existing studies. 
Furthermore, for most of the schemes, one notices  a gap between theory and practice:  convergence analyses yield tuning recommendations and associated rate bounds  that numerical simulations prove being far too conservative. To make these algorithms work in practice, practitioners  often use manual,  ad-hoc tunings. This  however makes the comparison  of  different schemes hard, running the risk of drawing  misleading conclusions. 
These issues  suggest the following questions: 
\begin{itemize}
\item[{\bf (Q1)}] Can one unify the design and analysis of distributed algorithms for Problem \eqref{prob:dop_nonsmooth_same}? 
\item[{\bf (Q2)}]  How do provable  rates of such schemes compare each other and with that  of the centralized proximal-gradient algorithm applied to   \eqref{prob:dop_nonsmooth_same}?
\end{itemize}
\textbf{On (Q1):} Recent efforts toward a better understanding of the taxonomy of  distributed algorithms   are the following: \cite{jakovetic2018unification} provides a connection between EXTRA and DIGing; \cite{Scoy-canonical18} provides a canonical representation of some of the distributed algorithms above--NIDS and Exact-Diffusion are proved to be equivalent; and \cite{Sundararajan17} provides an automatic (numerical) procedure to prove linear rate of some classes of distributed algorithms.  These efforts model only first-order distributed algorithms applicable to Problem \eqref{prob:dop_nonsmooth_same} {\it with $G=0$} and employing a {\it single} round of communication and gradient computation. 
However, existing algorithms have their rate analysis done in isolation, under ad-hoc convergence conditions and different ranges for the stepsize--see Table~\ref{tab:optim_linear_rate}.  
{For instance,   NIDS~\cite{li2017decentralized} and    Exact Diffusion~\cite{yuan2018exact_p1}  are proved to be equivalent \cite{jakovetic2018unification,Scoy-canonical18};}
this   however is not reflected   by the convergence analyses and associated   rate bounds and admissible stepsize values, which instead are quite different.


\noindent \textbf{On (Q2):} Question (Q2) has been only partially addressed in the literature.  For instance, MSDA~\cite{scaman17optimal} uses multiple communication steps to achieve the lower complexity bound of  \eqref{prob:dop_nonsmooth_same} when $\mu>0$ and $G=0$; the OPTRA algorithm \cite{xu2019accelerated} achieves the lower bound when  $\mu=0$ (still and $G=0$); and
the algorithms in \cite{van2019distributed} and \cite{li2017decentralized} achieve linear rate and can adjust the number of communications performed at each iteration to  match the rate of the centralized gradient descent. 
However it is not clear how to extend  (if possible) these methods and their convergence analysis to the more general composite ($G\neq 0$) setting \eqref{prob:dop_nonsmooth_same}.
Furthermore, even when $G= 0$,   the rate results of existing algorithms are not theoretically comparable with each other--see Table~\ref{tab:optim_linear_rate}; they have been obtained under different stepsize range values and technical assumptions (e.g., on the weight matrices). 
   {Similarly, when $\mu=0$,   EXTRA~\cite{shi2015extra}, DIGing~\cite{qu2017harnessing,nedich2016achieving} D-ADMM~\cite{wei2012distributed_admm}, and  PG-EXTRA~\cite{shi2015proximal}, D-FBBS~\cite{xu2018bregman}, DPGA~\cite{aybat2017distributed}   achieve a sublinear rate of $O(1/k)$ for $G=0$ and $G\neq 0$, respectively.   However, the rate expression given in terms of ``big-O'' notation lacks of any insight on the dependence of the rate on the key design parameters (e.g., the stepsize).}


\begin{table*}[ht]
\caption{Convergence Properties of Distributed Algorithms for $L$-Smooth and $\mu$-Strongly Convex  $f_i$ ($\mu>0$): Existing results vs. improvements from this paper.  
}
\centering
\label{tab:optim_linear_rate}
\renewcommand{\arraystretch}{1.8}
\begin{tabular}{c|c|c|c|c|c}
\hline
\multicolumn{6}{c}{$L$-Smooth and $\mu$-Strongly Convex Functions [$\rho=\lambda_{\max}(W-J)]$, $W\in \mathbb{S}^m$, and $-I \prec W \preceq I$]} \\
\hline
\multirow{2}{*}{{\bf Algorithm}} &  \multirow{2}{*}{{\bf Problem} }  &   \multicolumn{2}{c|}{{\bf Stepsize }}  & \multicolumn{2}{c}{ {\bf Rate: $\mathcal{O}\left( \delta \log(\frac{1}{\epsilon})\right)$}} \\
\cline{3-6}
 &   &  {\bf literature (upper bound) } &  {\bf this paper (optimal, Corollary \ref{col:optimal_D})} &$\delta$,   {\bf literature} &   $\delta$,  {\bf  this paper} \\
\hline \hline
EXTRA~\cite{shi2015extra} & $F$  &$ \mathcal{O}\left(\frac{\mu (1-\rho)}{L^2} \right)$ & $\frac{2}{2L/(1-\rho)+\mu}=\mathcal{O}\left( \frac{1-\rho}{L}\right) $ & $\frac{L^2\kappa^2}{1-\rho}$ &    $\frac{\kappa}{1-\rho}$\\ 
\hline
NEXT~\cite{di2016next}  & \multirow{2}{*}{$F$}    & \multirow{2}{*}{$\mathcal{O}\left( \frac{(1-\rho)^2}{L} \right)$}    &  \multirow{2}{*}{$\frac{2}{L+\mu}=\mathcal{O}\left( \frac{1}{L} \right)$} & \multirow{2}{*}{N.A. }          &   \multirow{2}{*}{$\max\left\{\kappa, ~\frac{1}{(1-\rho)^2} \right\}$ }\\
AugDGM~\cite{xu2015augmented,xu2017convergence} &  &  &  &    &   \\
\hline
DIGing~\cite{nedich2016achieving,nedic2017geometrically}   & $F$     &$ \mathcal{O}\left( \min\{\frac{1}{\sqrt{n}\sqrt{\kappa} L \rho (1-\rho)},\frac{1}{L}\}\right)$ & $\frac{2}{4L/(1-\rho)^2+\mu} = \mathcal{O}\left( \frac{(1-\rho)^2}{L} \right)$    &$\max\{\kappa,\, \gg \frac{1}{1-\rho}\}$  &$\frac{\kappa}{(1-\rho)^2}$  \\
\hline
Harnessing~\cite{qu2017harnessing}   & $F$       &$ \mathcal{O}\left(\frac{(1-\rho)^2}{\kappa L}  \right)$  &  $\frac{2}{4L/(1-\rho)^2+\mu} = \mathcal{O}\left( \frac{(1-\rho)^2}{L} \right)$      &$\frac{\kappa^2}{(1-\rho)^2}$ & $\frac{\kappa}{(1-\rho)^2}$ \\
\hline
NIDS~\cite{li2017decentralized}    & $F$      &$ \mathcal{O}\left(\frac{1}{L} \right) $  &    $\frac{2}{L+\mu} = \mathcal{O}\left( \frac{1}{L} \right) $   &$\max\{\kappa,\, \frac{1}{1-\rho}\}$      &    $\max\{\kappa,\, \frac{1}{1-\rho}\}$   \\
\hline
Exact Diffusion~\cite{yuan2018exact_p1}    & $F$     &$ \mathcal{O}\left( \frac{\mu}{L^2}\right) $ & $\frac{2}{L+\mu}=  \mathcal{O}\left(\frac{1}{L} \right) $ &   $\gg \max \{ \kappa^2, \, \frac{1}{1-\rho}\}$   & $\max\{\kappa,\, \frac{1}{1-\rho}\}$     \\
\hline
\cite{jakovetic2018unification} $(b=0)$    & $F$      & $\mathcal{O}\left( \frac{(1-\rho)^2}{\kappa L} \right) $    &   $\frac{2}{L/\lambda_{\min}(W^2)+\mu} = \mathcal{O}\left( \frac{\lambda_{\min}(W^2)}{L} \right)$       & $ \frac{\kappa^2}{(1-\rho)^2}$        &  $\frac{\kappa}{1-\rho}$   \\
\hline
\cite{jakovetic2018unification} $(b=\frac{1}{\gamma}W, \, W \succ 0)$     & $F$      & N.A.    &   $\frac{2}{L/\lambda_{\min}(W)+\mu} =  \mathcal{O}\left( \frac{\lambda_{\min}(W)}{L} \right)$    & N.A.        &  $\frac{\kappa}{1-\rho}$   \\
\hline
\multirow{2}{*}{\cite{mansoori2019general} $(W \succ 0)$}      & \multirow{2}{*}{$F$}     &  \multirow{2}{*}{(14) in the paper} &         $\frac{2}{\mu + L/((1-\lambda_{\min}(W))\lambda_{\min}(W)^K)} $          &  \multirow{2}{*}{N.A. }                     &   $\max \big\{ \frac{1}{1-\rho^K}, $  \\
                      &         &                 &          $=\mathcal{O}\left( \frac{(1-\lambda_{\min}(W))\lambda_{\min}(W)^K}{L} \right)$             &                    &      $\frac{\kappa}{(1-\lambda_{\min}(W))\lambda_{\min}(W)^K} \big\}$  \\
 \hline
\cite{alghunaim2019linearly}      & $F+G$       & $\frac{2\lambda_{\min}(W)}{L+\mu} = \mathcal{O}\left( \frac{\lambda_{\min}(W)}{L}\right) $  &     $\frac{2\lambda_{\min}(W}{L+\mu\lambda_{\min}(W)} = \mathcal{O}\left( \frac{\lambda_{\min}(W)}{L}\right) $           & $> \frac{\kappa}{1-\rho}$   &  $\max\{\frac{\kappa}{\lambda_{\min}(W)}, \frac{1}{\alpha (1-\rho)}\}$   \\
\hline
this paper    & $F+G$     & \multicolumn{2}{c|}{ $\frac{2}{L+\mu}  $} & \multicolumn{2}{c}{$\max\left\{\kappa, ~\frac{1}{1-\rho} \right\}$} \\
\hline
\end{tabular}
\end{table*}

\indent This paper aims at addressing Q1 and Q2 
in the general setting \eqref{prob:dop_nonsmooth_same}, with either $\mu>0$ or $\mu=0$. Our major contributions are discussed next.
\noindent \textbf{1) Unified framework and rate analysis:} We propose a general primal-dual  distributed  algorithmic framework   that unifies for the first time   ATC (Adapt-Then-Combine)-  {\it and} CTA (Combine-Then-Adapt)-based distributed algorithms, solving either smooth ($G=0$) or {\it composite} optimization problems ($G\neq0$). Most of existing ATC and CTA schemes are special cases of the proposed framework--cf. Table~\ref{tab:instances}. A unified set of convergence conditions and rate expression are provided, leveraging   a novel operator contraction-based analysis. By product of our unified framework and convergence conditions, several existing schemes, proposed  only to solve smooth instances of \eqref{prob:dop_nonsmooth_same} \cite{shi2015extra,li2017decentralized,yuan2018exact_p1,qu2017harnessing,di2016next,xu2015augmented}, gain now their ``proximal'' extension and thus become applicable also to composite optimization while  enjoying the same (novel) convergence rate (as derived in this paper)  of their ``non-proximal'' counterparts. \textbf{2) Improving upon existing results and tuning recommendations:}  Our results improve on existing convergence conditions and rate bounds,   such as \cite{shi2015extra,li2017decentralized,yuan2018exact_p1,qu2017harnessing,di2016next,xu2015augmented}--Table~\ref{tab:optim_linear_rate} shows the improvement achieved by our analysis in terms of stepsize bounds and rate expression (see  Sec. \ref{sec:discussion} for more details). 
Our rate results provide for the first time a platform for a fair   comparison of these algorithms; the tightness of our rates as well as the established ranking of the algorithms based on the new rate expressions are supported  by  numerical results. 
\textbf{3) Rate separation when $G\neq 0$:} For ATC-based schemes, when $\mu>0$, the dependency of the linear rate  on the agents' functions   and the network topology are {\it decoupled}, matching the typical rates of the proximal gradient algorithm applied to \eqref{prob:dop_nonsmooth_same} and  consensus averaging. Furthermore, 
the optimal stepsize value 
is independent on the network and  matches the optimal choice for the centralized proximal gradient algorithm. 
When $\mu=0$, we provide an explicit expression of the sublinear rate (beyond the ``Big-O'' decay) revealing a similar decoupling between optimization and network parameters. Our novel expression sheds also light on the choice of the stepsize minimizing the rate bound: the optimal choice  is not necessarily $1/L$ but  instead   depends on the network parameters as well as the degree of heterogeneity of the agents' functions (cf. Sec. \ref{sec:sublinear}). These results are a major departure from existing analyses,  which do not show such a clear separation, and complements the results in \cite{li2017decentralized} applicable only to smooth and strongly convex instances of  \eqref{prob:dop_nonsmooth_same}. 
\textbf{4) Balancing computation and communication:} When $\mu>0$, the proposed scheme can  adjust the ratio between the number of communication and computation steps to achieve the {\it same} rate of the centralized proximal gradient scheme. We show that Chebyshev acceleration can also be employed to further reduce the number of communication steps per computation.

The results of this work have been partially presented in \cite{XuSunScutariJ}.  While  preparing the final version of this manuscript, we noticed the arxiv submission  \cite{alghunaim2019decentralized}, which is an independent and parallel work (cf. \cite{XUCAMSAP_arxiv}). There are  some substantial differences between our findings and \cite{alghunaim2019decentralized}: i) our algorithmic framework  unifies ATC and CTA schemes while \cite{alghunaim2019decentralized} can cover only ATC ones;    our analysis is based on an operator contraction-based analysis, which is of independent interest; and  ii) we  study  convergence also when $F$ is convex but not strongly convex while  \cite{alghunaim2019decentralized} focuses only on strongly convex problems.

 \vspace{-0.2cm}

\section{Problem Statement}
We study Problem~\eqref{prob:dop_nonsmooth_same} under   the following  assumption, capturing either strongly convex or just convex objectives.
\begin{assum}\label{assum:smooth_strong_reg}
Each   $f_i:\mathbb{R}^d\rightarrow\mathbb{R}$ is  $\mu$-strongly convex, $\mu\geq 0$,  and $L$-smooth; and $G:\mathbb{R}^d\rightarrow\mathbb{R}\cup\{\pm\infty\}$ is proper, closed and convex. When $\mu>0$, define $\kappa\triangleq L/\mu$.
\end{assum} 


\smallskip

\noindent {\bf Network model:} Agents are embedded in a network, modeled as  an undirected, static graph $\Gh=(\Vx,\Eg)$, where $\Vx$ is the set of nodes (agents) and $\{i,j\}\in\Eg$ if there is an edge (communication link) between node $i$ and $j$. 
We make the blanket assumption that $\Gh$ is connected.  We introduce the following matrices associated with $\mathcal{G}$, which will be used to build the proposed distributed algorithms. \vspace{-0.1cm}
\begin{dfn}[Gossip matrix]
\label{dfn:weight_matrix}
A matrix $\W\triangleq [W_{ij}]\in \mathbb{R}^{m\times m}$ is said to be compliant to the graph $\Gh=(\Vx,\Eg)$ if  $W_{ij} \neq 0$ for  $\{i,j\}\in\Eg$, and $W_{ij}=0$ otherwise. The set of such matrices is denoted by $\mathcal{W}_\mathcal{G}$.
\end{dfn}
\begin{dfn}[$K$-hop gossip matrix]
\label{dfn:k-hop_weight_matrix}
Given $K\in\mathbb{N}_+,$ a matrix $ {W}'\in\mathbb{R}^{m\times m}$ is said to be a $K$-hop gossip matrix associated to $\Gh=(\Vx,\Eg)$  if $ {W}'=P_K(\W)$, for some $\W\in\mathcal{W}_\Gh$, where   $P_K(\cdot)$ is a monic polynomial of order $K$.\vspace{-0.1cm}
\end{dfn}
Note that, if  $\W\in \mathcal{W}_\Gh$,  using $W_{ij}$ to   linearly combine information between two immediate neighbor  agents $i$ and $j$ corresponds to performing a single communication  round.   Using  a $K$-hop matrix ${W}'=P_K(\W)$  requires instead $K$ consecutive rounds of communications. 
$K$-hop gossip matrices are crucial to employ acceleration of the communication step, which will be a key ingredient to exploit the tradeoff between  communications and computations (cf.~ Sec.~\ref{sec:tradeoff}).\\
\noindent{\textbf{A saddle-point reformulation:}} Our path to design distributed solution methods for \eqref{prob:dop_nonsmooth_same} is to solve a   saddle-point reformulation of  \eqref{prob:dop_nonsmooth_same}   via general proximal splitting algorithms that are implementable over  $\mathcal{G}$.   Following a standard path in the literature,
we introduce local copies $x_i\in \mathbb{R}^d$ (the $i$-th one is owned by agent $i$) of  $x$  and functions \vspace{-0.2cm}\begin{equation}
	f(\x)\triangleq\sum_{i=1}^mf_i(x_i)\quad \text{and}\quad g(\x)\triangleq\sum_{i=1}^m G(x_i),\vspace{-0.1cm}
\end{equation} with $\x\triangleq [x_1,\ldots, x_m]^\top\in\mathbb{R}^{m\times d}$;
  \eqref{prob:dop_nonsmooth_same} can be  rewritten as\vspace{-0.1cm}
\begin{equation}\label{prob:dop_nonsmooth_same_augmented}
\min_{\x\in\mathbb{R}^{m\times d}} f(\x)+g(\x),~{\text{s.t.}}~  \sqrt{{C}}\x=0,\vspace{-0.1cm}
\end{equation}
where $C$ satisfies the following assumption:
\begin{assum}\label{assum:cond_C}
$C \in \mathbb{S}^m_+$ and $\Null{{C}}=\Span{\ones}$.
\end{assum}
Under this condition,  the constraint $\sqrt{ {C}}\x=0$   enforces a consensus among $x_i$'s and thus  \eqref{prob:dop_nonsmooth_same_augmented} is equivalent to   \eqref{prob:dop_nonsmooth_same}. 
 {The set of points satisfying the KKT conditions of \eqref{prob:dop_nonsmooth_same_augmented} reads:}
\begin{align}\label{eq:kkt_conditions}
 \mathcal{S}_{\texttt{KKT}}  \triangleq & \left\{   \x \in  \mathbb{R}^{m\times d} \, \big\vert\, \exists\, \y \in  \mathbb{R}^{m\times d} \text{ such that} \right.\nonumber\\
& \left.  \sqrt{{C}}\x=0, \quad  \nabla f(X)+  \sqrt{{C}}\y \in-\partial g(\x)\right\},
\end{align}
where $\nabla f(\x)\triangleq [\nabla f_1(x_1),\nabla f_2(x_2),...,\nabla f_m(x_m)]^\top$
and  $\partial g(\x)$ denotes the subdifferential  of $g$ at $\x$. Then we have the following standard result. 

\begin{lem}\label{lemma_eq_KKT} Under Assumption \ref{assum:smooth_strong_reg}, $x^\star\in \mathbb{R}^d$ is an optimal solution of Problem~\eqref{prob:dop_nonsmooth_same} if and only if  ${1}_m x^{\star\top} \in \mathcal{S}_{\texttt{KKT}}$.  
\end{lem}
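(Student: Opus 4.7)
\smallskip

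\noindent\textbf{Proof proposal.} The plan is a standard KKT-equivalence argument, exploiting the key algebraic fact that $\sqrt{C}$ has $\Span{\ones}$ as its null space and $\Span{\ones}^\perp$ as its range (a consequence of $C\in\mathbb{S}^m_+$ together with Assumption~\ref{assum:cond_C}). I will use this repeatedly. As a preliminary observation, since $\Null{\sqrt{C}}=\Null{C}=\Span{\ones}$, the constraint $\sqrt{C}\x=0$ is equivalent to $x_1=\cdots=x_m$, so \eqref{prob:dop_nonsmooth_same_augmented} is just a consensus reformulation of \eqref{prob:dop_nonsmooth_same} (the per-row objective in \eqref{prob:dop_nonsmooth_same_augmented} is $m$ times the original objective, which does not alter optimal points).

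For the forward direction, assume $x^\star$ is optimal for \eqref{prob:dop_nonsmooth_same}, i.e., $0\in\nabla F(x^\star)+\partial G(x^\star)$, so $\nabla F(x^\star)\in-\partial G(x^\star)$. Set $\x^\star\triangleq\ones_m x^{\star\top}$; I need to exhibit a multiplier $\y^\star$ such that $(\x^\star,\y^\star)$ realizes \eqref{eq:kkt_conditions}. The first KKT relation $\sqrt{C}\x^\star=0$ is immediate. For the inclusion, I will pick the same subgradient $\nabla F(x^\star)\in-\partial G(x^\star)$ for every row, and require
\[
\sqrt{C}\,\y^\star \;=\; \ones_m\nabla F(x^\star)^\top - \nabla f(\x^\star)\;\triangleq\; \D.
\]
This system is solvable iff $\D\in\mathrm{Range}(\sqrt{C})=\Span{\ones}^\perp$; but $\ones^\top \D = m\nabla F(x^\star)^\top - \sum_i \nabla f_i(x^\star)^\top = 0$ by definition of $F$, so such a $\y^\star$ exists and the KKT conditions hold at $\x^\star$.

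For the reverse direction, assume $\ones_m x^{\star\top}\in\mathcal{S}_{\texttt{KKT}}$, so there exists $\y^\star$ with $\nabla f(\x^\star)+\sqrt{C}\y^\star\in-\partial g(\x^\star)$. Since $g(\x)=\sum_i G(x_i)$ is separable, this inclusion is row-wise: for each $i$, $\nabla f_i(x^\star)+(\sqrt{C}\y^\star)_i\in-\partial G(x^\star)$. Averaging these row-wise subgradient inclusions (using that $\partial G(x^\star)$ is convex, so the mean of elements of $-\partial G(x^\star)$ lies in $-\partial G(x^\star)$) yields
\[
\tfrac{1}{m}\sum_{i=1}^m \nabla f_i(x^\star) \;+\; \tfrac{1}{m}\ones^\top\sqrt{C}\,\y^\star \;\in\; -\partial G(x^\star).
\]
Now $\ones^\top\sqrt{C}=0$ because $\sqrt{C}\ones=0$ (from $\Null{\sqrt{C}}=\Span{\ones}$), and the first term is exactly $\nabla F(x^\star)$. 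Hence $0\in\nabla F(x^\star)+\partial G(x^\star)$, i.e., $x^\star$ is optimal for \eqref{prob:dop_nonsmooth_same}.

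The argument is essentially elementary; the only subtlety is the bookkeeping between $\partial g(\x^\star)$ and $\partial G(x^\star)$ and the characterization of $\mathrm{Range}(\sqrt{C})$. Because the constraint is linear, no constraint qualification is needed beyond Assumption~\ref{assum:cond_C}, and convexity of $\partial G(x^\star)$ justifies the averaging step in the reverse direction. No structural obstacle is anticipated; the proof should fit in a few lines.
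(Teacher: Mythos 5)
Your argument is correct. Note that the paper does not actually supply a proof of this lemma---it is stated as a ``standard result''---so there is nothing to compare against; your write-up is precisely the expected argument. Both directions check out: in the forward direction the solvability of $\sqrt{C}\,\y^\star=\ones_m\nabla F(x^\star)^\top-\nabla f(\x^\star)$ follows because $\sqrt{C}$ is symmetric PSD with $\Null{\sqrt{C}}=\Null{C}=\Span{\ones}$, hence $\mathrm{Range}(\sqrt{C})=\Span{\ones}^\perp$, and each column of the right-hand side is orthogonal to $\ones$; in the reverse direction the row-wise averaging over the product subdifferential $\partial g(\x^\star)=\partial G(x^\star)\times\cdots\times\partial G(x^\star)$ together with $\ones^\top\sqrt{C}=0$ gives exactly $0\in\nabla F(x^\star)+\partial G(x^\star)$, which is necessary and sufficient for optimality since $F$ is convex and everywhere differentiable (so the subdifferential sum rule applies without further qualification).
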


Building on Lemma \ref{lemma_eq_KKT}, 
in the next section, we propose a general distributed algorithm for \eqref{prob:dop_nonsmooth_same}  based on  a suitably defined operator splitting solving  the KKT system   \eqref{eq:kkt_conditions}. 
 \vspace{-0.2cm}

\section{A General Primal-Dual Proximal Algorithm}\label{sec:alg-des}

\begin{table*}[ht]
\caption{Special cases of Algorithm~\eqref{alg:g-ABC} for specific choices of   $A,B,C$ matrices and given  gossip matrix $-\I\prec \W \preceq \I$. The listed algorithms can be cast in the proposed framework. 
}
\centering
\label{tab:instances}
\renewcommand{\arraystretch}{1.8}
\begin{tabular}{c|c|c|c}
\hline
\multirow{2}{*}{{\bf Algorithm}} & \multirow{2}{*}{{\bf Problem}} & \multirow{2}{*}{{\bf Choice of the $A,B,C$}}    & \# {\bf communications} \\
&&&    {\bf per gradient evaluation} \\
\hline \hline
EXTRA~\cite{shi2015extra} &$F$ &$A=\frac{I+W}{2} \quad B=I  \quad C=\frac{I-W}{2}$ & 1 \\
\hline
NEXT~\cite{di2016next}/AugDGM~\cite{xu2015augmented,xu2017convergence}  &$F$  & $A=\left( \frac{I+W}{2} \right)^2 \quad B=\left( \frac{I+W}{2} \right)^2  \quad C=\left( \frac{I-W}{2} \right)^2$  & 2\\
\hline
DIGing~\cite{nedich2016achieving,nedic2017geometrically}/Harnessing~\cite{qu2017harnessing} &$F$  & $A=\left( \frac{I+W}{2} \right)^2  \quad B=I    \quad C=\left( \frac{I-W}{2} \right)^2$ & 2 \\
\hline
NIDS~\cite{li2017decentralized}/Exact Diffusion~\cite{yuan2018exact_p1}  &$F$       & $A=\frac{I+W}{2}  \quad B=\frac{I+W}{2}   \quad C=\frac{I-W}{2}$  & 1 \\
\hline
\cite{jakovetic2018unification} ($B'=bI$) &$F$  &$A=W^2 + \gamma b(I-W)   \quad B=\I \quad C=(I-W)^2 + \gamma b(I-W) $ &2 \\
\hline
\cite{mansoori2019general}   &$F$  & $A= \W^K      \quad B=\sum_{i=0}^{K-1}{\W^i}   \quad C=\I-\W^K $ & $K$ \\
 \hline
\cite{alghunaim2019linearly} &$F+G$ & $A=W   \quad B=I     \quad C=\alpha(I-W)$ with $0\prec W \preceq I$ and $\alpha \leq 1$ & 1\\
\hline
\end{tabular}
\end{table*}

The proposed general primal-dual proximal   algorithm, termed $ABC-$Algorithm, reads
 \vspace{-0.1cm}
\begin{subequations}\label{alg:g-ABC}
\begin{align}
\x^k&=\prox{\gamma g}{\z^k}, \label{alg:g-ABC_z}\\
\z^{k+1}&=\A\x^k-\gamma\B\nabla f(\x^k)-Y^k, \label{alg:g-ABC_x}\\
Y^{k+1}&=Y^k+{\C}\z^{k+1}, \label{alg:g-ABC_y}
\end{align}
\end{subequations}
with  $\z^0\in \mathbb{R}^{m\times d}$ and $\y^0=0$. In \eqref{alg:g-ABC_z},  $\prox{\gamma g}{\x}\triangleq\text{arg}\min_{\y}g(\y)+\frac{1}{2\gamma}\norm{\x-\y}^2$ is the standard proximal operator. Eq. \eqref{alg:g-ABC_z} and~\eqref{alg:g-ABC_x} represent the update of the primal variables, where  $\A,\B\in \mathbb{R}^{m\times m}$  are suitably chosen weight matrices, and $\gamma>0$ is the stepsize. Eq.~\eqref{alg:g-ABC_y} represents the update of  the dual variables.
\\\indent
Define the  set\vspace{-0.1cm}\begin{align}
	 \hspace{-0.4cm}\mathcal{S}_{\texttt{Fix}}  &\triangleq   \Big\{   \x \in  \mathbb{R}^{m\times d} \, \big\vert\, \C\x=0 \text{ and }\nonumber\\& \quad 
  \ones^\top(I-\A)\x+\gamma \, \ones^\top\B\nabla f(\x)\in - \gamma\,  \ones^\top\partial g(\x) \Big\}.
\end{align}
Since all agents share the same $G$, it is not difficult to check that any fixed point $(X^\star, Z^\star, Y^\star)$ of  Algorithm \eqref{alg:g-ABC} is such that $X^\star \in \mathcal{S}_{\texttt{Fix}}$.
The following are {\it necessary} and {\it sufficient} conditions on  $\A, \B$   for  $X^\star \in \mathcal{S}_{\texttt{Fix}}$   to be a solution of \eqref{prob:dop_nonsmooth_same_augmented}.   


\begin{assum}\label{assum:cond_A_B}
The weight matrices $\A,\B \in \mathbb{R}^{m\times m}$ satisfy:
$\ones^\top\A \,\ones=m$,  and  $\ones^\top\B = \ones^\top$.
\end{assum}
\begin{lem}\label{lem:iff_fixed_point}
Under Assumption~\ref{assum:cond_C},  $\mathcal{S}_\texttt{KKT}=\mathcal{S}_\texttt{Fix}$ if and only if $\A,\B$ satisfy Assumption~\ref{assum:cond_A_B}.
\end{lem}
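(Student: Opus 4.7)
The plan is to reduce both sets to conditions purely on a consensus parameter $x\in\mathbb{R}^d$, then compare them pointwise for all admissible $f,g$.

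\textbf{Common reduction.} Under Assumption~\ref{assum:cond_C}, $\sqrt{C}\,{1}_m=0$ (since $\|\sqrt{C}\,{1}_m\|^2={1}_m^\top C\,{1}_m=0$) and $\mathrm{Null}(C)=\mathrm{Null}(\sqrt{C})=\mathrm{Span}({1}_m)$. Hence $CX=0\Leftrightarrow\sqrt{C}X=0\Leftrightarrow X={1}_m x^\top$ for some $x\in\mathbb{R}^d$, and both $\mathcal{S}_{\texttt{KKT}}$ and $\mathcal{S}_{\texttt{Fix}}$ impose this consensus structure on $X$.

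\textbf{Sufficiency.} With $X={1}_m x^\top$, multiply the KKT inclusion $\nabla f(X)+\sqrt{C}Y\in-\partial g(X)$ on the left by ${1}_m^\top$. Using ${1}_m^\top\sqrt{C}=0$, $\partial g({1}_m x^\top)=\partial G(x)\times\cdots\times\partial G(x)$ (rowwise), and convexity of $\partial G(x)$, this yields $\sum_i\nabla f_i(x)\in -m\,\partial G(x)$, i.e., the optimality condition of (P). Conversely, given such $x$, pick $q\in\partial G(x)$ with $-q=\tfrac{1}{m}\sum_i\nabla f_i(x)$; since $\mathrm{Range}(\sqrt{C})=\mathrm{Span}({1}_m)^{\perp}$ and the residual $-\nabla f({1}_m x^\top)-{1}_m q^\top$ is annihilated by ${1}_m^\top$, one can solve for $Y$ satisfying the KKT inclusion. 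Thus $\mathcal{S}_{\texttt{KKT}}=\{{1}_m x^{\star\top}:\,x^\star\text{ optimal for (P)}\}$. Now evaluate the Fix inclusion at $X={1}_m x^\top$: writing $b^\top\triangleq{1}_m^\top B$, it becomes
\begin{equation*}
(m-{1}_m^\top A\,{1}_m)\,x+\gamma\sum_{i=1}^m b_i\,\nabla f_i(x)\in -m\gamma\,\partial G(x).
\end{equation*}
Under Assumption~\ref{assum:cond_A_B}, the first term vanishes and $b_i=1$, so this collapses exactly to the reduced KKT condition above, giving $\mathcal{S}_{\texttt{Fix}}=\mathcal{S}_{\texttt{KKT}}$.

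\textbf{Necessity.} I will construct counterexamples in the two failure regimes. If ${1}_m^\top A\,{1}_m\neq m$, take $f_i\equiv 0$ and $G\equiv 0$: every $X={1}_m x^\top$ lies in $\mathcal{S}_{\texttt{KKT}}$, whereas the Fix condition forces $(m-{1}_m^\top A\,{1}_m)x=0$, i.e., $x=0$; the sets differ. If instead ${1}_m^\top A\,{1}_m=m$ but $b\neq{1}_m$, pick a scalar test case $G(x)=\tfrac{1}{2}\|x\|^2$ and $f_i(x)=-\langle a_i,x\rangle$ for arbitrary $a_i\in\mathbb{R}^d$: KKT yields $x^\star=\tfrac{1}{m}\sum_i a_i$, while Fix yields $x^\star=\tfrac{1}{m}\sum_i b_i a_i$; matching these across all choices of $\{a_i\}$ forces $b_i=1$ for every $i$, a contradiction.

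\textbf{Main obstacle.} The only nonroutine step is the converse part of the KKT reduction: turning the scalarized inclusion $\sum_i\nabla f_i(x)\in-m\partial G(x)$ back into the vector inclusion with a witness dual $Y$. This relies precisely on the range characterization $\mathrm{Range}(\sqrt{C})=\mathrm{Span}({1}_m)^{\perp}$ provided by Assumption~\ref{assum:cond_C} and on using the same subgradient $q$ in every block, exploiting the fact that all agents share the common $G$.
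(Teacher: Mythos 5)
Your proof is correct and follows essentially the same route as the paper's: the sufficiency direction rests on the same two facts (left-multiplication by $\ones^\top$ annihilates $\sqrt{C}$, and $\Span{\sqrt{C}}=\Span{\ones}^{\perp}$ lets you recover the dual witness $Y$), the only cosmetic difference being that you pass through the scalar optimality condition of (P) rather than comparing the two inclusions directly. On the necessity direction you are slightly more explicit than the paper, which just appeals to the ``arbitrary nature of $f$, $g$, and $\x$''; your concrete test instances ($f_i\equiv 0$, $G\equiv 0$ to force $\ones^\top A\,\ones=m$, and linear $f_i$ with quadratic $G$ to force $\ones^\top B=\ones^\top$) instantiate that arbitrariness argument in a checkable way.
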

\begin{proof}
See Sec. \ref{sec:pf_lm2} in Appendix.\vspace{-0.2cm}
\end{proof}

\subsection{Connections with existing distributed algorithms} \label{Sec:special-cases} 
Algorithm~\eqref{alg:g-ABC} contains a gamut of distributed (and centralized) schemes, corresponding to different choices of the weight matrices $\A,\B$ and $\C$; any   $A,B,C\in \mathcal{W}_{\mathcal{G}}$ leads to  distributed  implementations. The use of general matrices $\A$ and $\B$ (rather the more classical choices $\A=\B$ or $\B=I$) permits a unification of both ATC- and CTA-based  updates; this includes several existing distributed algorithms proposed for special cases of (P), as    discussed next. 
  
  We begin rewriting  Algorithm~\eqref{alg:g-ABC} in the following equivalent form by    subtracting   \eqref{alg:g-ABC_x} at iteration $k+1$  from \eqref{alg:g-ABC_x} at iteration $k$: 
\begin{align}\label{eq:g-pd-ATC_eliminate_y}
\z^{k+2}&=(\I-\C)\z^{k+1}+\A(\x^{k+1}-\x^k)\nonumber\\
&\quad -\gamma\B(\nabla f(\x^{k+1})-\nabla f(\x^k)),
\end{align}
where $\x^k=\prox{\gamma g}{\z^k}$.

When $G=0$, \eqref{eq:g-pd-ATC_eliminate_y} reduces to
\begin{equation}\label{eq:g-pd-ATC_eliminate_y_G=0}
\begin{aligned}
\x^{k+2}&=(\I-\C+\A)\x^{k+1}\\
&~~~-\A\x^k-\gamma\B(\nabla f(\x^{k+1})-\nabla f(\x^k)).
\end{aligned} 
\end{equation}

We show next  that   the schemes in \cite{shi2015extra,li2017decentralized,yuan2018exact_p1,di2016next,xu2015augmented,nedich2016achieving,qu2017harnessing,jakovetic2018unification,mansoori2019general,alghunaim2019linearly} are all special cases of Algorithm \eqref{alg:g-ABC}.  Table \ref{tab:instances} summarizes 
the specific choices of  $A, B$, and $C$ in   \eqref{alg:g-ABC} yielding  the desired equivalence, where $\W\in  \mathcal{W}_\mathcal{G}$ is the weight matrix used in the target distributed algorithms. Notice that all these choices  
satisfy Assumptions \ref{assum:cond_C} and \ref{assum:cond_A_B}. 

 

 
\noindent {\bf 1) EXTRA \cite{shi2015extra}:}    EXTRA solves \eqref{prob:dop_nonsmooth_same}  with $G=0$, and  reads
\begin{equation}\label{eq:EXTRA}
\x^{k+2}=(\I+\W)\x^{k+1}-\tilde{\W}\x^k-\gamma(\nabla f(\x^{k+1})-\nabla f(\x^k)),
\end{equation}
where $\W,\,\tilde{\W}$ are two design weight matrices satisfying  $({\I+\W})/2\succeq\tilde{\W}\succeq\W$ and $\tilde{\W}\succ 0$. 
Clearly,  \eqref{eq:EXTRA} is an instance of \eqref{eq:g-pd-ATC_eliminate_y_G=0} [and thus \eqref{alg:g-ABC}], with    $\A=\tilde{\W}$,   $\B=\I,$ and $\C=\tilde{\W}-\W$.  
\smallskip 

\noindent {\bf 2) NIDS~\cite{li2017decentralized}~/~Exact diffusion~\cite{yuan2018exact_p1,yuan2018exact_p2}:}
The NIDS (Exact Diffusion) algorithm applies to \eqref{prob:dop_nonsmooth_same}  with $G=0$, and reads
\begin{equation}
\x^{k+2}=\frac{\I+\W}{2}(2\x^{k+1}-\x^k-\gamma(\nabla f(\x^{k+1})-\nabla f(\x^k))),
\end{equation}
which is an instance of our general scheme, with $\A=\B=({\I+\W})/{2}$ and $\C=({\I-\W})/{2}$. 
\smallskip

\noindent {\bf 3) NEXT~\cite{di2016next} \& AugDGM~\cite{xu2015augmented}:}
The gradient tracking-based  algorithms  NEXT/AugDGM applied to \eqref{prob:dop_nonsmooth_same}  with $G=0$, are:
\begin{subequations}\label{alg:NEXT/AugDGM}
\begin{align}
\x^{k+1}&=\W(\x^k-\gamma Y^k), \label{alg:g-gradtrack-ATC_x}\\
Y^{k+1}&=\W(Y^k+\nabla f(\x^{k+1})-\nabla f(\x^k)).\label{alg:g-gradtrack-ATC_y}
\end{align}
\end{subequations}
Eliminating the $Y$-variable, \eqref{alg:NEXT/AugDGM} can be rewritten as: 
\begin{equation}\label{eq:g-gradtrack-ATC_eliminate_y}
\x^{k+2}=2\W\x^{k+1}-\W^2\x^k-\gamma\W^2(\nabla f(\x^{k+1})-\nabla f(\x^k)).
\end{equation}
Clearly (\ref{eq:g-gradtrack-ATC_eliminate_y}) is an instance of our scheme \eqref{alg:g-ABC}, with $\A=\B=\W^2,\C=(\I-\W)^2$.
Notice that distributed gradient tracking schemes in the so-called CTA form are also special cases of Algorithm \eqref{alg:g-ABC}. For instance, one can show that the DIGing algorithm~\cite{nedich2016achieving} corresponds to the setting $\A=\W^2,\B=\I,$ and $\C=(\I-\W)^2$.
\smallskip 

\noindent {\bf 4)  General primal-dual scheme~\cite{jakovetic2018unification,mansoori2019general}}: A general distributed primal-dual algorithm was proposed in~\cite{jakovetic2018unification} for \eqref{prob:dop_nonsmooth_same}  with $G=0$ as follows
\begin{subequations}\label{alg:Jakovetic}
\begin{align}
\x^{k+1}&=\W\x^k-\gamma(\nabla f(\x^k)+Y^k),\label{alg:Jakovetic_x}\\
Y^{k+1}&=Y^k-(\I-\W)(\nabla f(\x^k)+Y^k-\B'\x^k),\label{alg:Jakovetic_y}
\end{align}
\end{subequations}
where $\B'$ can be $b\I$ or $b\W$ for some positive constant $b>0$ therein. Eliminating the $Y$-variable,  \eqref{alg:Jakovetic} reduces to
\[
\begin{aligned}
\x^{k+2}&=2\W\x^{k+1}-(\W^2+\gamma(\I-\W)\B')\x^k\\
&-\gamma(\nabla f(\x^{k+1})-\nabla f(\x^k)),
\end{aligned}
\]
which corresponds to the proposed algorithm, with $\A=\W^2+\gamma(\I-\W)\B',\B=\I,\C=(\I-\W)^2+\gamma(\I-\W)\B'$. 
\\\indent Similarly, building on a general augmented Lagrangian, another general primal-dual algorithm was proposed in~\cite{mansoori2019general} for \eqref{prob:dop_nonsmooth_same}  with $G=0$, which reads
\begin{subequations}\label{alg:ermin}
\begin{align}
\x^{k+1}&=(\I-\alpha\B')^K\x^k-\alpha\C'(\nabla f(\x^k)+\A'^\top Y^k), \label{alg:ermin_x}\\
Y^{k+1}&=Y^k+\beta\A'\x^{k+1}, \label{alg:ermin_y}
\end{align}
\end{subequations}
where $\A',\B',\C'$ are certain weight matrices therein and $\C'=\sum_{i=0}^{K-1}(\I-\alpha\B')^i$, with $K$ being the number of communication steps performed at each iteration. Eliminating $\y$ yields
\[
\begin{aligned}
\x^{k+2}&=(\I+(\I-\alpha\B')^K-\alpha\beta\C'\A'^\top\A')\x^{k+1}\\
&~~~-(\I-\alpha\B')^K\x^k-\alpha\C'(\nabla f(\x^{k+1})-\nabla f(\x^k))
\end{aligned}
\]
which corresponds to Algorithm \eqref{alg:g-ABC} with $\A=(\I-\alpha\B')^K,\B=\C',\C=\alpha\beta\C'\A'^\top\A'$. Notice that, letting $\W=\I-\alpha\B'$ and $\B'=\beta\A'^\top\A'$, we have $\A=\W^K,\B=\sum_{i=0}^{K-1}\W^i$ and $\C=(\I-\W)\sum_{i=0}^{K-1}\W^i=\I-\W^K$, which satisfy Assumption~\ref{assum:cond_A_B}.\smallskip 

\noindent {\bf 6) Decentralized proximal algorithm~\cite{alghunaim2019linearly}}: A proximal algorithm is proposed to solve~\eqref{prob:dop_nonsmooth_same} with $G\neq 0$, which reads
\[
\begin{aligned}
\z^{k+2}&=(\I-\alpha\B')\z^{k+1}+(\I-\B')(\x^{k+1}-\x^k)\\
&-\gamma(\nabla f(\x^{k+1})-\nabla f(\x^k)).
\end{aligned}
\]
where $\x^k=\prox{\gamma g}{\z^k}$ and $\B'$ is some properly chosen  matrix that ensures consensus. It is easy to show that the above algorithm corresponds to Algorithm \eqref{alg:g-ABC} with $\A=\I-\B',\, \B=\I,\, \C=\alpha\B'$. Choosing $\W=\I-\B'$, we have $\A=\W,\B=\I$ and $\C=\alpha(\I-\W)$, which clearly satisfy Assumption~\ref{assum:cond_A_B}.  {Note that, since  $B=I$, this algorithm  (and thus \cite{alghunaim2019linearly}) is of CTA form and cannot model  ATC-based schemes, such as NEXT/AugDGM and NIDS/Exact Diffusion listed in Table~\ref{tab:instances}.}\vspace{-0.2cm}
 
\section{An Operator Splitting Interpretation}
Our convergence analysis builds on an equivalent fixed-point reformulation of Algorithm \eqref{alg:g-ABC}, whose mapping enjoys a favorable decomposition in terms of contractive and nonexpansive operators.  We begin introducing the following assumptions. 
\begin{assum}\label{assum:B_C_commute}
The weight matrices satisfy: \begin{itemize}
  \item[i)] $A = BD$; 
  \item[ii)] $B$ and $C$ commute.
\end{itemize}
\end{assum}
Under the above assumption, the following lemma provides an operator splitting form for Algorithm~\eqref{alg:g-ABC}.

\begin{prop}\label{lm:transform}
Given the sequence $\{(\z^k, \x^k, \y^k)\}_{k\in \mathbb{N}_+}$ generated by Algorithm \eqref{alg:g-ABC}, define $U^k \triangleq 
[({\z}^k)^\top, ({\y}^k)^\top]^\top$. Under Assumption~\ref{assum:B_C_commute}, the following hold:

\noindent\textbf{1)}\vspace{-0.4cm}
 \begin{equation} \label{eq:def_U_tilde}U^{k}=
\begin{bmatrix}B & 0\\
0 & B\sqrt{C}
\end{bmatrix}  \widetilde{U}^k, \quad\text{with}\quad \widetilde{U}^k\triangleq \begin{bmatrix}\widetilde{\z}^{k}\\
\sqrt{C}\widetilde{\y}^{k}
\end{bmatrix};\vspace{-0.2cm}\end{equation}
and $\{\widetilde{U}^k\}_k$ satisfies the following dynamics
\begin{equation}\label{eq:dyn_U_tilde}
\widetilde{U}^{k+1} = \underbrace{
\begin{bmatrix}
(D- \gamma \nabla f)\circ \text{prox}_{\gamma g} \circ B  & -\sqrt{C}\\
\sqrt{C}(D- \gamma \nabla f)\circ \text{prox}_{\gamma g} \circ B   & I-C
\end{bmatrix}}_{T}
\widetilde{U}^k,\quad k\geq 1,
\end{equation}
 with initialization  $\widetilde{\z}^{1} = \widetilde{\y}^{1} = (D - \gamma \nabla f) (\x^{0})$;
 
 \noindent\textbf{2)} The operator $T$ can be decomposed as
\begin{equation}\label{eq:T_operator}
T=
\underbrace{
\begin{bmatrix}
I  & -\sqrt{C}\\
\sqrt{C}   & I-C
\end{bmatrix}}_{\triangleq T_C}
\underbrace{
\begin{bmatrix}
D- \gamma \nabla f & 0\\
0  & I
\end{bmatrix}}_{\triangleq T_f}
\underbrace{
\begin{bmatrix}
\text{prox}_{\gamma g}  & 0 \\
0  & I
\end{bmatrix}}_{\triangleq T_g}
\underbrace{
\begin{bmatrix}
B &  0 \\
0    & I
\end{bmatrix}}_{\triangleq T_B},
\end{equation} where   $T_C$ and $T_B$ are the operators associated with communications while    $T_f$   and $T_g$ are the   gradient and proximal operators, respectively;

\noindent\textbf{3)} Every fixed point $\widetilde{U}^\star\triangleq [\widetilde{Z}^\star,\sqrt{C}\widetilde{Y}^\star]$ of $T$ is such  that $\x^\star \triangleq \text{prox}_{\gamma g}(B \widetilde{Z}^\star)\in \mathcal{S}_{\texttt{Fix}}$. Therefore,  $\x^\star  =\ones x^{\star \top}$, where  $x^\star$ is an optimal solution of  \eqref{prob:dop_nonsmooth_same}.
\end{prop}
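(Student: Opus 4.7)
My plan is driven by the two structural hypotheses in Assumption~\ref{assum:B_C_commute}: the factorization $A=BD$ and the commutativity $BC=CB$, which together let me strip a common left factor $B$ off the $z$-update \eqref{alg:g-ABC_x} and a common left factor $BC$ off the $Y$-update \eqref{alg:g-ABC_y}. For part \textbf{1)}, I proceed by induction on $k\ge 1$ with inductive hypothesis $z^k = B\widetilde z^k$ and $Y^k = BC\widetilde y^k$ for some pair $(\widetilde z^k,\widetilde y^k)$. The base case uses $Y^0=0$: the first iteration gives $z^1 = Ax^0-\gamma B\nabla f(x^0) = B(Dx^0-\gamma\nabla f(x^0))$ and $Y^1 = Cz^1 = BC(Dx^0-\gamma\nabla f(x^0))$, yielding $\widetilde z^1 = \widetilde y^1 = (D-\gamma\nabla f)(x^0)$, exactly the stated initialization. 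The inductive step, applied to \eqref{alg:g-ABC_x}--\eqref{alg:g-ABC_y} and using $x^k=\mathrm{prox}_{\gamma g}(B\widetilde z^k)$ from \eqref{alg:g-ABC_z}, produces the scalar recursions
\begin{align*}
\widetilde z^{k+1} &= (D-\gamma\nabla f)\bigl(\mathrm{prox}_{\gamma g}(B\widetilde z^k)\bigr) - C\widetilde y^k, \\
\widetilde y^{k+1} &= \widetilde y^k + \widetilde z^{k+1}.
\end{align*}
Pre-multiplying the $\widetilde y$-recursion by $\sqrt C$ and stacking with state $\widetilde U^k=[\widetilde z^k;\,\sqrt C\widetilde y^k]$ reproduces \eqref{eq:dyn_U_tilde}; the $-\sqrt C$ in the top-right block arises because multiplying the second component $\sqrt C\widetilde y^k$ by $-\sqrt C$ returns $-C\widetilde y^k$, via $\sqrt C\sqrt C=C$.

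Part \textbf{2)} is a one-shot block multiplication: $T_B$, $T_g$, $T_f$ each have a single nontrivial diagonal entry, and composing them right-to-left collapses the top-left block to $(D-\gamma\nabla f)\circ\mathrm{prox}_{\gamma g}\circ B$; a final left-multiplication by $T_C$ reproduces $T$ in \eqref{eq:T_operator} entry by entry. For part \textbf{3)}, let $\widetilde U^\star=[\widetilde z^\star;\,\sqrt C\widetilde y^\star]$ be a fixed point and set $x^\star=\mathrm{prox}_{\gamma g}(B\widetilde z^\star)$. Substituting the top fixed-point equation into the bottom one collapses the latter to $\sqrt C\widetilde z^\star=0$, whence $Cz^\star=CB\widetilde z^\star=BC\widetilde z^\star=0$. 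Because $\Null{C}=\Span{\ones}$ and $g$ separates across coordinates, $\mathrm{prox}_{\gamma g}$ preserves consensus, and so $Cx^\star=0$, giving the first clause of $\mathcal S_{\texttt{Fix}}$. For the subdifferential clause, I back-substitute to obtain $z^\star = Ax^\star - \gamma B\nabla f(x^\star) - Y^\star$, combine with the prox optimality $z^\star-x^\star\in\gamma\partial g(x^\star)$, pre-multiply by $\ones^\top$, and invoke Assumption~\ref{assum:cond_A_B} together with $\sqrt C\ones=0$ (from $C\succeq 0$ and $C\ones=0$) to conclude $\ones^\top Y^\star = \ones^\top B\sqrt C(\sqrt C\widetilde y^\star)=0$; the remaining terms are precisely the required inclusion. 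Lemma~\ref{lem:iff_fixed_point} then upgrades $x^\star\in\mathcal S_{\texttt{Fix}}$ to $x^\star\in\mathcal S_{\texttt{KKT}}$, and Lemma~\ref{lemma_eq_KKT} identifies $x^\star$ with $\ones x^{\star\top}$ for an optimal $x^\star$ of \eqref{prob:dop_nonsmooth_same}.

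The principal obstacle is the bookkeeping in part \textbf{1)}: the lifted variable $\widetilde y^k$ is only determined modulo $\Null{BC}$, so the choice of parametrizing the second block of $\widetilde U^k$ as $\sqrt C\widetilde y^k$ (rather than $\widetilde y^k$ itself) is precisely what makes the scalar recursion coincide with the matrix operator $T$ in \eqref{eq:dyn_U_tilde}; the identity $\sqrt C\sqrt C=C$ is the bridge that reconciles the $\sqrt C$ in the top-right block of $T$ with the $C\widetilde y^k$ in the scalar recursion. Once this change of variables is pinned down, part \textbf{2)} is routine algebra and part \textbf{3)} reduces to the single structural identity $\sqrt C\ones=0$, which annihilates the dual mean.
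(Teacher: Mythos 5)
Your proposal is correct and follows essentially the same route as the paper: you factor $B$ out of the primal update via $A=BD$ and the commutativity of $B$ and $C$, define $\widetilde{\y}$ through the running sum $\widetilde{\y}^{k+1}=\widetilde{\y}^k+\widetilde{\z}^{k+1}$ (the paper writes this as the explicit partial sum obtained by unrolling the differenced recursion, whereas you verify the same identities by induction --- a purely presentational difference), and your fixed-point argument in part \textbf{3)} (deducing $\sqrt{C}\widetilde{\z}^\star=0$, consensus preservation by $\mathrm{prox}_{\gamma g}$, and annihilation of the dual mean via $\ones^\top B C=0$ before invoking Lemmas~\ref{lemma_eq_KKT} and~\ref{lem:iff_fixed_point}) matches the paper's. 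No gaps.
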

\begin{proof}
From \eqref{alg:g-ABC}, we have
$\z^{k+1} =(\I-\C)\z^{k} 
  +\A(\x^{k}-\x^{k-1})-\gamma\B(\nabla f(\x^{k})-\nabla f(\x^{k-1}))$, which applied recursively yields
\begin{align*}
& \z^{k+1} \\
=    &  \sum_{t=1}^k (\I-\C)^{k-t} \left( \A(\x^{t}-\x^{t-1})-\gamma\B(\nabla f(\x^{t})-\nabla f(\x^{t-1}) \right)  \\
& + (\I-\C)^k  \left( \A\x^{0} - \gamma\B \nabla f(\x^0) \right)  \\
\stackrel{(*)}{=}    &  B \bigg(\sum_{t=1}^k (\I-\C)^{k-t} \big( D(\x^{t}   -\x^{t-1})-\gamma(\nabla f(\x^{t})-\nabla f(\x^{t-1}) \big)  \\
& + (\I-\C)^k  \left( D\x^{0} - \gamma \nabla f(\x^0) \right)   \bigg)\\
= & B \sum_{t=0}^k (\I-\C)^{k-t}  (D - \gamma \nabla f) (\x^{t}) \\
&  - B\sum_{t=0}^{k-1} (\I-\C)^{k-1-t}  (D - \gamma \nabla f) (\x^{t}) ,
\end{align*}
where in $(*)$ we used Assumption \ref{assum:conditions_convergence}i) and  \ref{assum:conditions_convergence}iv). 

Define $\widetilde{\z}^k$ such that $\z^k = B \widetilde{\z}^k,$   $k \geq 1$; and let 
\begin{align}\label{eq:tildeY}
\widetilde{\y}^{k+1} \triangleq \sum_{t=1}^{k+1} \widetilde{\z}^t = \sum_{t=0}^k (\I-\C)^{k-t}  (D - \gamma \nabla f) (\x^{t}),
\end{align}
for  $k\geq 0$. It is clear from the definition of $\widetilde{\z}$ and $\widetilde{\y}$ that
\begin{align}\label{eq:dym_tilde_system}
\begin{bmatrix}
\widetilde{\z}^{k+1}\\
\widetilde{\y}^{k+1}
\end{bmatrix} =
\begin{bmatrix}
(D- \gamma \nabla f)\circ \text{prox}_{\gamma g} \circ B  & -C\\
(D- \gamma \nabla f)\circ \text{prox}_{\gamma g} \circ B   & I-C
\end{bmatrix}
\begin{bmatrix}
\widetilde{\z}^{k}\\
\widetilde{\y}^{k}
\end{bmatrix}.
\end{align}

Introducing  $\widetilde{U}^k$ as defined in \eqref{eq:def_U_tilde}, it follows from (\ref{eq:dym_tilde_system}) that  $\widetilde{U}^k$ obeys the dynamics  \eqref{eq:dyn_U_tilde}.  The equation $Y^k = BC \widetilde{\y}^k $ follows readily from \eqref{alg:g-ABC_y} and \eqref{eq:tildeY}. Finally, the decomposition of the transition matrix $T$ can be checked by  inspection.  

We prove now the last statement of the theorem. For every fixed point $\widetilde{U}^\star\triangleq [\widetilde{Z}^\star,\sqrt{C}\widetilde{Y}^\star]$ of $T$, we have\footnote{For any matrix $M$, we use $\Span{M}$ to denote its column space.} $\Span{\widetilde{\z}^\star} \subset \Span{1}$ and 
\begin{align}\label{eq:fix1}
-1^\top \bracket{B(D - \gamma \nabla f)\circ \text{prox}_{\gamma g}\circ B \bracket{\widetilde{Z}^\star}} + 1^\top B \widetilde{Z}^\star =0.
\end{align}
For $\x^\star \triangleq \text{prox}_{\gamma g}(B \widetilde{Z}^\star)$, it holds $\Span{\x^\star} \subset \Span{1}$ and
\begin{align}\label{eq:fix2}
B \widetilde{Z}^\star \in \x^\star + \gamma \partial g(\x^\star ).
\end{align}
Combining \eqref{eq:fix1} and \eqref{eq:fix2} leads to
$
1^\top(I-\A)\x^\star+\gamma \, 1^\top\B\nabla f(\x^\star)\in - \gamma\,  1^\top\partial g(\x^\star), 
$
which is equivalent to $\x^\star \in \mathcal{S}_{\texttt{Fix}}$. The proof follows from Lemma~\ref{lemma_eq_KKT} and \ref{lem:iff_fixed_point}.\smallskip 
\end{proof}



We summarize next the main properties of the operators  $T_C$,  $T_f$,  $T_g$,  and  $T_B$, which will be instrumental to establish linear convergence rate of the proposed algorithm.  We will use the following notation: given  $X \in \mathbb{R}^{2m\times d}$, we denote by  $(X)_u$ and $(X)_\ell$  its upper and lower $m\times d$ matrix-block; for any matrix $A\in \mathbb{R}^{m\times m}$, we denote $\Lambda_{A} = \diag(A, I)\in \mathbb{R}^{2m \times 2m}$ and $V_A = \diag(I,A)\in \mathbb{R}^{2m \times 2m}.$ 

\begin{lem}[Contraction of $\T_C$]
\label{lem:contraction_T_c}
The operator $\T_C$ satisfies
\[
\norm{\T_C\, X-\T_C \,Y }_{\Lambda_{I-C}} = \norm{X-Y }_{V_{I-C}},\quad \forall X,Y\in\mathbb{R}^{2m\times d}.
\]
\end{lem}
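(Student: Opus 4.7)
The plan is to exploit linearity of $T_C$ and then carry out a direct computation on a single block vector, using the fact that every matrix in sight is a polynomial of $C$ and hence symmetric and mutually commuting.

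First, since $T_C$ is a linear operator, writing $Z\triangleq X-Y$ reduces the claim to the identity
$\|T_C Z\|_{\Lambda_{I-C}}^2 = \|Z\|_{V_{I-C}}^2$
for all $Z\in\mathbb{R}^{2m\times d}$. Partition $Z$ into its blocks $Z_u,Z_\ell\in\mathbb{R}^{m\times d}$ as in the paper's convention. Then a direct multiplication gives
\[
T_C Z = \begin{bmatrix} Z_u - \sqrt{C}\, Z_\ell \\ \sqrt{C}\, Z_u + (I-C)\, Z_\ell \end{bmatrix}.
\]

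Next, expand $\|T_C Z\|_{\Lambda_{I-C}}^2 = \langle (I-C)(T_C Z)_u,(T_C Z)_u\rangle + \langle (T_C Z)_\ell,(T_C Z)_\ell\rangle$ and group the result into (a) pure $Z_u$-terms, (b) pure $Z_\ell$-terms, and (c) cross terms. Here I would use the key algebraic facts that $I-C$ and $\sqrt{C}$ are both symmetric and that any two polynomials of $C$ commute, so in particular $(I-C)\sqrt{C}=\sqrt{C}(I-C)$.

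The cross terms turn out to be
\[
-\,2\langle Z_u,(I-C)\sqrt{C}\, Z_\ell\rangle + 2\langle Z_u,\sqrt{C}(I-C)\, Z_\ell\rangle,
\]
which vanishes by commutativity. The $Z_u$-terms simplify through $(I-C)+C=I$ to give $\langle Z_u,Z_u\rangle$, while the $Z_\ell$-terms collapse via $(I-C)C+(I-C)^2=(I-C)[C+(I-C)]=I-C$ to give $\langle (I-C)Z_\ell,Z_\ell\rangle$. Adding the two contributions yields exactly $\|Z\|_{V_{I-C}}^2$, finishing the proof.

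There is no real obstacle here; the statement is an algebraic identity and the only subtlety is the bookkeeping of making sure the cross terms cancel, which is guaranteed by the commutativity and symmetry of functions of $C$. Note also that the identity does not require $I-C\succeq 0$; it holds as an equality of quadratic forms regardless, though of course $\Lambda_{I-C}$ and $V_{I-C}$ define genuine (semi)norms only when $C\preceq I$, which is the regime used in the rest of the paper.
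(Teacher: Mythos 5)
Your proof is correct and is essentially the paper's argument: the paper simply invokes linearity of $T_C$ together with the matrix identity $T_C^\top\Lambda_{I-C}T_C=V_{I-C}$, and your blockwise expansion (cross terms cancelling by commutativity of $\sqrt{C}$ with $I-C$, diagonal terms collapsing via $(I-C)+C=I$) is exactly the verification of that identity. Nothing further is needed.
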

\begin{proof}
The result comes readily from the definition of $T_C$ and the fact that $T_C^\top\Lambda_{I-C} \T_C=V_{I-C}$.
\end{proof} 

\begin{lem}[Contraction of $\T_f$]
\label{lem:contraction_T_f}
Consider the operator $\T_f$ under  Assumption~\ref{assum:smooth_strong_reg}, with $\mu>0$, and  $0\prec \D \preceq I$.
If $0<\gamma\leq  \gamma^\star (D)$ with\vspace{-0.2cm}
\begin{equation}\label{eq:opt_ss_D}
\gamma^\star(D) \triangleq \frac{2\lambda_{\min}(D)}{L+\mu \cdot \lambda_{\min}(D)},\vspace{-0.2cm}
\end{equation}
 then    \[
\begin{aligned}
&\norm{(\T_f\x)_u-(\T_f\y)_u}\leq q(D,\gamma)\norm{(\x)_u-(\y)_u}_D,
\end{aligned}
\]
 $\forall  \x,\y \in\mathbb{R}^{2m\times d}$, where  \vspace{-0.2cm}
\begin{equation}\label{eq:contract_factor_T_f}
q(D,\gamma)= 1-   \frac{2\gamma L}{\kappa + \lambda_{\min}(D)}.
\end{equation}
The  stepsize minimizing the contraction factor is $\gamma=  \gamma^\star(D)$, resulting in  the smallest achievable  $q(D,\gamma)$, given by 
\begin{equation}\label{eq:opt_factor_T_f}
q^\star(D) \triangleq \frac{\kappa-\lambda_{\min}(D)}{\kappa+\lambda_{\min}(D)}.
\end{equation}
\end{lem}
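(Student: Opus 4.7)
The plan is to recognise that $T_f$ acts on the upper block as the preconditioned gradient step $u \mapsto Du - \gamma \nabla f(u)$, and to reduce it to a vanilla gradient iterate via a similarity transform. Since $D \succ 0$, I would introduce $\tilde u \triangleq D^{1/2}u$ and the auxiliary objective $\tilde f(\tilde u)\triangleq f(D^{-1/2}\tilde u)$: a direct chain-rule calculation shows that after left-multiplication by $D^{-1/2}$ the map $u \mapsto Du - \gamma \nabla f(u)$ becomes the standard gradient iterate $\tilde T(\tilde u) \triangleq \tilde u - \gamma\nabla \tilde f(\tilde u)$ on $\tilde f$.

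Next I would transfer the smoothness and strong-convexity constants from $f$ to $\tilde f$ by exploiting the two sides of the hypothesis $0 \prec D \preceq I$: the upper bound $\lambda_{\max}(D) \leq 1$ yields $\mu$-strong convexity of $\tilde f$, while the lower bound $\lambda_{\min}(D) > 0$ yields $(L/\lambda_{\min}(D))$-smoothness. I would then invoke the textbook one-step contraction estimate for gradient descent on such a class---a standard consequence of Baillon--Haddad cocoercivity---namely
\begin{equation*}
\|\tilde T(\tilde u) - \tilde T(\tilde v)\|^2 \;\leq\; \Bigl(1 - \tfrac{2\gamma\tilde\mu\tilde L}{\tilde\mu+\tilde L}\Bigr)\,\|\tilde u-\tilde v\|^2, \quad \gamma \in \bigl(0, \tfrac{2}{\tilde L+\tilde\mu}\bigr].
\end{equation*}
Substituting $\tilde\mu = \mu$ and $\tilde L = L/\lambda_{\min}(D)$ identifies the admissible stepsize range with $(0,\gamma^\star(D)]$ and the rate coefficient with the quantity $q(D,\gamma)$ appearing in the statement.

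Finally, I would undo the substitution. Setting $\Delta \triangleq (X)_u - (Y)_u$ and $g \triangleq \nabla f((X)_u) - \nabla f((Y)_u)$, the left-hand side of the above inequality is $\|D^{-1/2}(D\Delta-\gamma g)\|^2 = \|D\Delta - \gamma g\|_{D^{-1}}^2$, which upper-bounds the plain Frobenius norm $\|D\Delta - \gamma g\|^2$ because $D^{-1}\succeq I$ (again from $D \preceq I$), while the right-hand side equals $\|\Delta\|_D^2$. This delivers the claimed contraction. Minimising $q(D,\gamma)$ over the admissible interval trivially picks $\gamma = \gamma^\star(D)$, and the classical algebraic identity $\bigl(\tfrac{\tilde L-\tilde\mu}{\tilde L+\tilde\mu}\bigr)^2 = \bigl(\tfrac{\kappa-\lambda_{\min}(D)}{\kappa+\lambda_{\min}(D)}\bigr)^2$ identifies the optimal factor with $q^\star(D)$. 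The main subtlety I anticipate is ensuring that both halves of $0 \prec D \preceq I$ are invoked at the right stages: the lower bound $\lambda_{\min}(D)>0$ to obtain the smoothness constant of $\tilde f$, and the upper bound $\lambda_{\max}(D)\leq 1$ to pass from the $D^{-1}$-weighted norm produced by inverting the substitution back to the Frobenius norm required on the left-hand side of the lemma.
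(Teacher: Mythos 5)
Your proposal is correct and follows essentially the same route as the paper's own proof: the paper likewise bounds $\|D\Delta-\gamma g\|^2$ by the $D^{-1}$-weighted norm (using $D\preceq I$), passes to $\tilde f(\cdot)=f(\sqrt{D^{-1}}\,\cdot)$ via the substitution $x'=\sqrt{D}x$, and applies the cocoercivity inequality of Nesterov's Theorem~2.1.12 with $\tilde L=L/\lambda_{\min}(D)$ — which is exactly the inline derivation of the ``textbook'' one-step contraction you invoke as a black box. The only cosmetic differences are that the paper takes $\tilde\mu=\mu/\lambda_{\max}(D)$ where you use the (slightly weaker but statement-matching) bound $\tilde\mu=\mu$, and your argument inherits the same squared-versus-unsquared convention for $q(D,\gamma)$ that the paper itself uses when applying the lemma downstream.
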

\begin{proof}
See Sec. \ref{sec:pf_tf} in Appendix.
\end{proof}

We conclude with the properties of $\T_g$ and $\T_B$, which follow  readily from the non-expansive property of the proximal operator and the linear nature of $\T_B$, respectively. 
\begin{lem}[Non-expansiveness  of $T_g$]
 The operator $\T_g$ satisfies:  $\forall X, Y\in\mathbb{R}^{2m\times d}$,
\label{lem:contraction_T_g}
\begin{align*}
  \norm{(\T_g\, X)_u-(\T_g\, Y)_u}^2&\leq \norm{(X)_u-(Y)_u}^2 \\
  (\T_g\,X)_\ell &= (X)_\ell.
\end{align*}
\end{lem}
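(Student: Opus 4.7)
The plan is to read off the statement directly from the block-diagonal structure of $T_g$ together with the classical non-expansiveness of the proximal operator of a proper closed convex function. By the definition \eqref{eq:T_operator}, $T_g = \operatorname{diag}(\operatorname{prox}_{\gamma g}, I)$ acts on $X \in \mathbb{R}^{2m\times d}$ by sending its upper block $(X)_u$ through $\operatorname{prox}_{\gamma g}$ and leaving its lower block $(X)_\ell$ untouched. Hence $(T_g X)_\ell = (X)_\ell$ for every $X$, which instantly gives the second identity in the statement, and $(T_g X)_u - (T_g Y)_u = \operatorname{prox}_{\gamma g}((X)_u) - \operatorname{prox}_{\gamma g}((Y)_u)$.

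For the first inequality, I would invoke firm non-expansiveness of the proximal mapping of a proper, closed, convex function (see, e.g., Bauschke--Combettes). Concretely, since Assumption~\ref{assum:smooth_strong_reg} guarantees that $G$ is proper, closed, and convex, the separable sum $g(\x) = \sum_{i=1}^m G(x_i)$ inherits these properties on $\mathbb{R}^{m\times d}$, so $\operatorname{prox}_{\gamma g}$ is firmly non-expansive in the Frobenius norm:
\[
\|\operatorname{prox}_{\gamma g}(U) - \operatorname{prox}_{\gamma g}(V)\|^2 \le \|U-V\|^2,\quad \forall U,V\in\mathbb{R}^{m\times d}.
\]
Applying this with $U = (X)_u$ and $V = (Y)_u$ delivers the desired bound on the upper block.

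There is essentially no obstacle here: no choice of stepsize, no use of $B$, $C$, $D$ or any spectral assumption enters the argument; only separability of $g$ across rows and the textbook non-expansiveness of $\operatorname{prox}$ are needed. The one minor point worth mentioning explicitly in the write-up is that the relevant inner product on $\mathbb{R}^{m\times d}$ is $\langle U,V\rangle = \operatorname{trace}(U^\top V)$, so that $\operatorname{prox}_{\gamma g}$ decomposes row-wise as $(\operatorname{prox}_{\gamma g}(U))_i = \operatorname{prox}_{\gamma G}(U_i)$ and non-expansiveness aggregates to the Frobenius norm by simply summing the row-wise inequalities.
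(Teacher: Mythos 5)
Your proposal is correct and matches the paper's argument: the paper gives no explicit proof, stating only that the lemma ``follows readily from the non-expansive property of the proximal operator,'' which is exactly the block-diagonal reading of $T_g$ plus standard non-expansiveness of $\operatorname{prox}_{\gamma g}$ that you spell out. Your additional remark on the row-wise separability of $g$ under the trace inner product is a harmless elaboration of the same idea.
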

\begin{lem} [Non-expansiveness   of $T_B$]
The operator  $\T_B$ satisfies: $\forall X \in\mathbb{R}^{2m\times d}$,
\label{lem:contraction_T_B}
\begin{align*}
\norm{(\T_B\, X)_u}^2  = \norm{(X)_u}_{B^2}^2, \quad (\T_g\,X)_\ell  = (X)_\ell.
\end{align*}
\end{lem}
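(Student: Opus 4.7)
The plan is to exploit the block-diagonal structure of $T_B$, which makes both claimed identities essentially mechanical consequences of the definition. Recall from \eqref{eq:T_operator} that $T_B = \diag(B, I)$ acting on $\mathbb{R}^{2m\times d}$. Hence, for any $X\in\mathbb{R}^{2m\times d}$, writing $X=[(X)_u^\top,(X)_\ell^\top]^\top$, one immediately has $(T_B X)_u = B(X)_u$ and $(T_B X)_\ell = (X)_\ell$. The second identity in the lemma (which, reading past the apparent typo ``$T_g$'' in the statement, must refer to $T_B$) is then just the lower-block component of this observation.

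For the norm identity, the plan is to apply the definition of the weighted Frobenius norm introduced in the Notations section: for any matrix $M\in \mathbb{S}^m_+$ and $Y\in\mathbb{R}^{m\times d}$, $\|Y\|_M^2 = \innprod{Y}{Y}_M = \text{trace}(Y^\top M Y)$. Using this on the upper block:
\begin{align*}
\|(T_B X)_u\|^2 &= \|B(X)_u\|^2 = \text{trace}\bigl((X)_u^\top B^\top B\, (X)_u\bigr).
\end{align*}
Since $B\in\mathbb{S}^m$ (the weight matrices belong to the set of symmetric gossip matrices used throughout the framework, as implicit in Assumption~\ref{assum:cond_A_B} and the special cases in Table~\ref{tab:instances}), we have $B^\top B = B^2$, whence the right-hand side equals $\text{trace}((X)_u^\top B^2 (X)_u) = \|(X)_u\|_{B^2}^2$, yielding the claim.

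The main ``obstacle'' — and it is a very mild one — is simply recording the symmetry of $B$ in order to identify $B^\top B$ with $B^2$; without this the identity would instead read $\|(T_B X)_u\|^2 = \|(X)_u\|_{B^\top B}^2$, which is the standard non-expansiveness-type bound under a $\|\cdot\|\leq 1$ assumption on $B$. No contraction is asserted here (no factor strictly less than one appears), which is consistent with the fact that $T_B$ is not required to shrink: the contractive behavior of the overall operator $T$ is carried by $T_C$ (Lemma~\ref{lem:contraction_T_c}) and $T_f$ (Lemma~\ref{lem:contraction_T_f}), while $T_B$ and $T_g$ only need to be nonexpansive in the appropriate weighted sense. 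In short, the proof reduces to one line of linear algebra once the block-diagonal form of $T_B$ and the definition of $\|\cdot\|_{B^2}$ are recalled.
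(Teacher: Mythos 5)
Your proof is correct and is exactly the argument the paper has in mind: the paper gives no explicit proof for this lemma, stating only that it ``follows readily from the linear nature of $\T_B$,'' and your one-line computation using the block-diagonal form of $T_B$ together with the symmetry of $B$ (which holds in all relevant settings, since Assumptions~\ref{assum:conditions_convergence_no_reg} and~\ref{assum:conditions_convergence} require $B\in\mathbb{S}^m$) is the canonical way to fill that in. You are also right that ``$(\T_g\,X)_\ell$'' in the statement is a typo for ``$(\T_B\,X)_\ell$.''
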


\section{Linear Convergence} 
In this section we prove linear convergence of Algorithm \eqref{alg:g-ABC}, under strong convexity of each $f_i.$ 
Since most of the algorithms in the literature considered only the case $G=0$, we begin with that setting (cf. Sec. \ref{sec:linear-rate-G_zero} ). Sec.\ref{sec:sc_G} extends our analysis to    $G\neq 0$. Finally, we comment our results  in Sec.\ref{sec:discussion}. \vspace{-0.3cm}

\subsection{Convergence under $G= 0$} \label{sec:linear-rate-G_zero} Consider Problem \eqref{prob:dop_nonsmooth_same} with  $G=0$. Algorithm~\eqref{alg:g-ABC} reduces to\vspace{-0.2cm}
\begin{subequations}\label{alg:ABC_no_G}
\begin{align}
X^{k+1}&= AX^k-\gamma B\nabla f(X^k)-Y^k, \label{eq:ABC_alg_equi_X}\\
Y^{k+1}&=Y^k+CX^{k+1}, \label{eq:ABC_alg_equi_Y}
\end{align}
\end{subequations}
with $X^0 \in \mathbb{R}^{m\times d}$ and $Y^0 = 0$.    

Theorem~\ref{thm:contraction_T_c_T_f_T_B} below establishes linear convergence   of  Algorithm~\eqref{alg:ABC_no_G} under the following assumption on $A, B,$ and $C$. 

\begin{assum}\label{assum:conditions_convergence_no_reg} 
The weight matrices $A\in \mathbb{R}^{m\times m}$, $B,\,C\in \mathbb{S}^{m}$ and the stepsize $\gamma$ satisfy: \begin{itemize} 
  \item[i)] $A=BD$, with $0 \prec D \preceq I$ and $D\in \mathbb{S}^{m}$; 
  \item[ii)] $\ones^\top D \ones = m$ and $\ones^\top B = \ones^\top$;   \item[iii)]  $0 \preceq C \prec I$ and  $\Null{{C}}=\Span{\ones}$;
  \item[iv)] $B$ and $C$ commute;
  \item[v)]    ${q(D,\gamma)}^2 AB \prec (I-C)$ and $ 0<\gamma \leq \gamma^\star(D)$,
\end{itemize}
where $q(D,\gamma)$ and $\gamma^\star(D)$ are defined in  \eqref{eq:contract_factor_T_f} and (\ref{eq:opt_ss_D}), respectively.
\end{assum}

Assumption \ref{assum:conditions_convergence_no_reg} 
is quite mild and satisfied by a variety of algorithms; for instance, this is the case for all the schemes  in Table~\ref{tab:instances}. In particular, the commuting  property of $B$ and $C$ is trivially satisfied when $B, C\in P_K(W)$, for some given $W\in \mathcal{W}_{\mathcal G}$ (as in Table \ref{tab:instances}). Also, one can  show that condition v)  in Assumption~\ref{assum:conditions_convergence_no_reg} is {\it necessary} to achieve linear rate.

\begin{thm}[Linear rate for $T_CT_fT_B$]
\label{thm:contraction_T_c_T_f_T_B}
Consider Problem~\eqref{prob:dop_nonsmooth_same} under Assumption~\ref{assum:smooth_strong_reg}, $\mu>0$, and $G=0$, with   solution    $x^\star$. Let $\{(\x^k,Y^k)\}_{k \in \mathbb{N}_+}$ be the sequence generated by Algorithm \eqref{alg:ABC_no_G} under Assumption~\ref{assum:conditions_convergence_no_reg}. Then,   $\norm{\x^k-\ones x^{\star\top}}^2=\mathcal{O}(\delta^k),$ with
\begin{equation} \label{eq:def_lambda_Th15}  
 \delta \triangleq  \max\Big(q(D,\gamma)^2 \lambda_{\max} (A B(I-C)^{-1}), 1-\lambda_2(C) \Big), \end{equation}
where $q(D,\gamma)$ is defined in~\eqref{eq:contract_factor_T_f}.
\end{thm}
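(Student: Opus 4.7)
The plan is to view Algorithm \eqref{alg:ABC_no_G} as the fixed-point iteration $\widetilde{U}^{k+1}=T\,\widetilde{U}^k$ with $T=T_C T_f T_B$ (since $G=0$ makes $T_g=I$), and to turn the three operator lemmas into a one-step contraction in the weighted norm $\|\cdot\|_{\Lambda_{I-C}}^2 = \|(\cdot)_u\|_{I-C}^2 + \|(\cdot)_\ell\|^2$. First I would establish the existence of a fixed point $\widetilde{U}^\star = (\widetilde{Z}^\star,\sqrt{C}\widetilde{Y}^\star)$ of $T$ associated to the primal optimum: by Lemma~\ref{lem:iff_fixed_point} and the KKT system \eqref{eq:kkt_conditions}, one can pick $\widetilde{Z}^\star$ so that $B\widetilde{Z}^\star = \ones x^{\star\top}$ and choose $\widetilde{Y}^\star$ accordingly; a direct substitution then verifies $T\widetilde{U}^\star = \widetilde{U}^\star$.

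The core step is the one-step estimate. Exploiting linearity of $T_C$ and the fact that $T_C(0)=0$, Lemma~\ref{lem:contraction_T_c} yields
\[
\bigl\|\widetilde{U}^{k+1}-\widetilde{U}^\star\bigr\|_{\Lambda_{I-C}}^2 = \bigl\|T_fT_B\widetilde{U}^k - T_fT_B\widetilde{U}^\star\bigr\|_{V_{I-C}}^2,
\]
which splits as the Euclidean norm on the upper block plus the $I-C$ norm on the lower block. On the upper block, $T_B$ acts as multiplication by $B$ and $T_f$ applies $D-\gamma\nabla f$, so Lemma~\ref{lem:contraction_T_f} gives
\[
\bigl\|(T_fT_B\widetilde{U}^k - T_fT_B\widetilde{U}^\star)_u\bigr\|^2 \le q(D,\gamma)^2\bigl\|B\bigl(\widetilde{U}^k-\widetilde{U}^\star\bigr)_u\bigr\|_D^2 = q(D,\gamma)^2\bigl\|(\widetilde{U}^k-\widetilde{U}^\star)_u\bigr\|_{AB}^2,
\]
using $BDB = AB$ (because $B$ is symmetric and $A=BD$). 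On the lower block, $T_f$ and $T_B$ are the identity, so the contribution is simply $\|(\widetilde{U}^k-\widetilde{U}^\star)_\ell\|_{I-C}^2$.

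It remains to compare the right-hand side to $\|\widetilde{U}^k-\widetilde{U}^\star\|_{\Lambda_{I-C}}^2$. For the upper block I would use the generalized-eigenvalue bound $AB \preceq \lambda_{\max}(AB(I-C)^{-1})(I-C)$, valid because $AB$ is symmetric and $I-C\succ 0$ by Assumption~\ref{assum:conditions_convergence_no_reg}iii); this gives the factor $q(D,\gamma)^2\lambda_{\max}(AB(I-C)^{-1})$, which is strictly less than $1$ precisely because of Assumption~\ref{assum:conditions_convergence_no_reg}v). For the lower block, the key observation is that $(\widetilde{U}^k-\widetilde{U}^\star)_\ell \in \mathrm{Range}(\sqrt{C}) = \mathrm{Span}(\ones)^\perp$; on that subspace $I-C$ has largest eigenvalue $1-\lambda_2(C)<1$, yielding the second entry of the $\max$ in \eqref{eq:def_lambda_Th15}.

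Combining the two estimates gives $\|\widetilde{U}^{k+1}-\widetilde{U}^\star\|_{\Lambda_{I-C}}^2\le\delta\,\|\widetilde{U}^k-\widetilde{U}^\star\|_{\Lambda_{I-C}}^2$ and therefore $\|\widetilde{U}^k-\widetilde{U}^\star\|_{\Lambda_{I-C}}^2=\mathcal{O}(\delta^k)$. Finally, since $G=0$ implies $\x^k = B\widetilde{Z}^k$ and $\ones x^{\star\top} = B\widetilde{Z}^\star$, I would bound
\[
\|\x^k-\ones x^{\star\top}\|^2 \le \|B\|^2\,\|\widetilde{Z}^k-\widetilde{Z}^\star\|^2 \le \frac{\|B\|^2}{1-\lambda_{\max}(C)}\,\|\widetilde{U}^k-\widetilde{U}^\star\|_{\Lambda_{I-C}}^2,
\]
obtaining the claimed $\mathcal{O}(\delta^k)$ rate. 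The main obstacle I anticipate is the careful bookkeeping that reduces the two matrix-valued inequalities to the scalar $\delta$ in \eqref{eq:def_lambda_Th15}: ensuring symmetry of $AB$, invoking the generalized eigenvalue comparison, and exploiting the null-space structure of $C$ to restrict the lower-block estimate to $\mathrm{Span}(\ones)^\perp$ so that the factor $1-\lambda_2(C)$ (rather than the useless $\lambda_{\max}(I-C)=1$) emerges.
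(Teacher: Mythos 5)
Your proposal is correct and follows essentially the same route as the paper: the same splitting $T=T_CT_fT_B$, the same weighted norm $\|\cdot\|_{\Lambda_{I-C}}$, the same three operator lemmas applied in the same order, the same generalized-eigenvalue comparison $AB\preceq\lambda_{\max}(AB(I-C)^{-1})(I-C)$ for the upper block, and the same restriction of the lower block to $\mathrm{Span}(\sqrt{C})=\mathrm{Span}(\ones)^\perp$ to extract $1-\lambda_2(C)$. The concluding bound $\|\x^k-\ones x^{\star\top}\|^2\le\frac{\lambda_{\max}(B^2)}{\lambda_{\min}(I-C)}\|\widetilde{U}^k-\widetilde{U}^\star\|^2_{\Lambda_{I-C}}$ is also exactly the paper's.
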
\begin{proof} 
Since \eqref{alg:ABC_no_G} corresponds to Algorithm \eqref{alg:g-ABC} with $G=0$, by Assumption~\ref{assum:conditions_convergence_no_reg} and Prop.~\ref{lm:transform}, \eqref{alg:ABC_no_G} can be equivalently rewritten in the form \eqref{eq:dyn_U_tilde}, with $T_g=I$;  and thus the $Z$- and $X$-variables coincide. Define $\x^\star=\z^\star\triangleq \ones x^{\star\top}$.  
Let $\widetilde{U}^k={[(\widetilde{\z}^k)^\top,(
\sqrt{C}\widetilde{\y}^k)^\top]}^\top$ be the auxiliary sequence defined in~\eqref{eq:def_U_tilde} with $\widetilde{U}^\star\triangleq [\widetilde{Z}^\star,\sqrt{C}\widetilde{Y}^\star]$ the fixed point of $T=T_CT_fT_B$. Then, we have\vspace{-0.1cm}
\begin{equation}\label{eq:connect_R_rate}
\begin{aligned}
& \norm{\x^k-\x^\star}^2 =\norm{\z^k-\z^\star}^2 \stackrel{\eqref{eq:def_U_tilde}}{\leq} \norm{\widetilde{\z}^k-\widetilde{\z}^\star}_{\B^2}^2 \\
& \leq \frac{\lambda_{\max}(B^2)}{\lambda_{\min}(I-C)} \norm{\widetilde{\z}^k-\widetilde{\z}^\star}_{I-C}^2  \leq \frac{\lambda_{\max}(B^2)}{\lambda_{\min}(I-C)} \norm{\widetilde{\U}^k-\widetilde{\U}^\star}^2_{\Lambda_{I-C}}.
\end{aligned}
\end{equation}
Using  \eqref{eq:dyn_U_tilde} in (\ref{eq:connect_R_rate}), it is sufficient  to prove that $T$ is contractive w.r.t. the norm $\norm{\cdot}_{\Lambda_{I-C}}$. 
To this end, consider the following chain of inequalities: $\forall \, X, Y\in\mathbb{R}^{2m\times d}$,  $X_\ell,Y_\ell \in \Span{\sqrt{C}}$,
\begin{align*}
& \norm{T\,X- T\,Y}_{\Lambda_{I-C}}^2 \\
&= \norm{T_C \circ T_f \circ T_B\,(X)- T_C \circ T_f \circ T_B\,(Y)}_{\Lambda_{I-C}}^2 \\
& \stackrel{Lem.~\ref{lem:contraction_T_c}} {=}\norm{T_f \circ T_B\,(X)- T_f \circ T_B\,(Y)}_{V_{I-C}}^2 \\
& \stackrel{Lem.~\ref{lem:contraction_T_f}}{\leq} \norm{T_B\,(X)- T_B\,(Y)}_{V_{I-C} \Lambda_{{q(D,\gamma)}^2 \,D}}^2 \\
& \stackrel{Lem.~\ref{lem:contraction_T_B}}{=} \norm{X- Y}_{V_{I-C} \,\Lambda_{{q (D,\gamma)}^2\, BDB} }^2 \stackrel{(*)}{\leq} \delta \, \norm{X- Y}_{\Lambda_{I-C}}^2,
\end{align*}
where (*) is due to: i)   for all $\,(Z)_u \in \mathbb{R}^{m\times d}$,
 \begin{align*} 
& \|(Z)_u\|^2_{BDB}=\|(I-C)^{\frac{1}{2}}(Z)_u\|^2_{(I-C)^{-1/2} BDB(I-C)^{-1/2}}   \\
& \leq  \lambda_{\text{max}} (A B(I-C)^{-1})\|(I-C)^{\frac{1}{2}}(Z)_u\|^2 \\
 & = \lambda_{\text{max}} (A B(I-C)^{-1})\norm{(Z)_u}^2_{I-C};
\end{align*}
 and ii) $X_\ell,Y_\ell \in \Span{\sqrt{C}}$. 
\end{proof}

Note that Theorem \ref{thm:contraction_T_c_T_f_T_B} is the first unified convergence result stating linear rate  for  ATC (corresponding to $D=I$) {\it and} CTA (corresponding to $B=I$)  schemes. Because of this generality and consistently with existing conditions for the convergence of CTA-based schemes,    the choice of the stepsize satisfying Assumption \ref{assum:conditions_convergence_no_reg} might depend on some network parameters. This is due to the fact that $\lambda_{\max} (AB(I-C)^{-1}) \geq 1$, since  $(I-C)^{-1/2} AB(I-C)^{-1/2} \ones = \ones$. Hence, when  $\lambda_{\max} (AB(I-C)^{-1}) > 1$, the stepsize needs to be leveraged to guarantee that $q(D,\gamma)^2 \lambda_{\max} (A B(I-C)^{-1})<1$, reducing the  range of feasible values.  For instance, this happens for i) CTA schemes  ($B=I$) such that  $D \preceq I-C$ does not hold; of ii) for ATC schemes ($D=I$) that do not satisfy the condition $B^2 \preceq I-C$.

 The following corollary provides a condition on the weight matrices enlarging the range of the stepsize to $[0,\gamma^\star(D)]$. Furthermore, the tuning minimizing the contraction factor $\delta$ in  \eqref{eq:def_lambda_Th15} is derived. 
   
\begin{col}\label{col:optimal_D}
Consider the setting of Theorem~\ref{thm:contraction_T_c_T_f_T_B}, and  further assume $AB \preceq I-C$.  
Then, $\norm{\x^k-\ones x^{\star\top}}^2=\mathcal{O}(\delta^k),$ with\vspace{-0.1cm}
\begin{equation} \label{eq:def_lambda_simp}
\delta =  \max \Big(q(D,\gamma)^2, 1-\lambda_2(C) \Big). 
\end{equation}
The stepsize that minimizes (\ref{eq:def_lambda_simp}) is $\gamma=\gamma^\star(D)=\frac{2\lambda_{\min}(D)}{L+\mu \cdot \lambda_{\min}(D)}$, 
resulting in the contraction factor
\begin{equation} \label{eq:def_lambda_simp_2}
\delta =  \max \left(\left(\frac{\kappa-\lambda_{\min}(D)}{\kappa+\lambda_{\min}(D)}\right)^2, 1-\lambda_2(C) \right). 
\end{equation}
The smallest   $\delta$ is achieved choosing  $\D=\I$, which yields    $\gamma = \gamma^\star \triangleq \frac{2}{\mu+L}$ and \vspace{-0.3cm}
\begin{equation}\label{rate-separation}
\delta^\star=\max\left\{ \bracket{\frac{\kappa-1}{\kappa+1}}^2, ~ 1-\lambda_2(\C) \right\}.
\end{equation}
\end{col}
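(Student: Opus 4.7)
The plan is to derive \eqref{eq:def_lambda_simp} and \eqref{eq:def_lambda_simp_2} from Theorem~\ref{thm:contraction_T_c_T_f_T_B} by plugging in the extra hypothesis $AB \preceq I-C$ and then optimizing successively in $\gamma$ and in $D$. Comparing \eqref{eq:def_lambda_Th15} with \eqref{eq:def_lambda_simp}, the only thing that must be shown is $\lambda_{\max}(AB(I-C)^{-1}) = 1$. The upper bound $\leq 1$ is immediate from $AB \preceq I-C$: since $B, C \in \mathbb{S}^m$ commute (Assumption~\ref{assum:conditions_convergence_no_reg}iv), so do $B$ and $(I-C)^{-1/2}$, and $AB = BDB$ is symmetric, hence $(I-C)^{-1/2} AB (I-C)^{-1/2}$ is symmetric and similar to $AB(I-C)^{-1}$.

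For the matching lower bound, I would verify $AB\ones = \ones$. The identity $\ones^\top B = \ones^\top$ combined with symmetry of $B$ yields $B\ones = \ones$. For $D$, a short Rayleigh-quotient argument shows that $\ones^\top D\ones = m$ together with $D \preceq I$ forces $D\ones = \ones$: writing $\ones = \sum_i c_i v_i$ in an eigenbasis of $D$ with eigenvalues $\lambda_i(D) \leq 1$, the condition $\sum_i c_i^2 \lambda_i(D) = m = \sum_i c_i^2$ forces $\lambda_i(D) = 1$ whenever $c_i \neq 0$. Hence $AB\ones = BDB\ones = \ones$, and since $C\ones = 0$, $\ones$ is an eigenvector of $(I-C)^{-1/2} AB (I-C)^{-1/2}$ with eigenvalue $1$, so the maximum equals $1$ exactly. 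This yields \eqref{eq:def_lambda_simp}.

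Next I would optimize $\delta = \max(q(D,\gamma)^2, 1-\lambda_2(C))$ over $\gamma \in (0, \gamma^\star(D)]$. By Lemma~\ref{lem:contraction_T_f}, $q(D,\gamma)$ is nonnegative and monotonically decreasing in $\gamma$ on this interval, attaining its minimum value $q^\star(D) = (\kappa - \lambda_{\min}(D))/(\kappa + \lambda_{\min}(D))$ at $\gamma = \gamma^\star(D)$. Consequently $q(D,\gamma)^2$ is also minimized at the same point, and so is $\delta$ (the second argument of the max being independent of $\gamma$), producing \eqref{eq:def_lambda_simp_2}.

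Finally, I would optimize over $D \in \mathbb{S}^m$ with $0 \prec D \preceq I$. The term $1-\lambda_2(C)$ is independent of $D$, while $\left(\frac{\kappa-\lambda_{\min}(D)}{\kappa+\lambda_{\min}(D)}\right)^2$ is strictly decreasing in $\lambda_{\min}(D) \in (0,1]$ (using $\kappa \geq 1$). The constraint $D \preceq I$ gives $\lambda_{\min}(D) \leq 1$, with equality iff $D = I$, so $\delta$ is minimized by $D = I$, yielding $\gamma^\star = \gamma^\star(I) = 2/(L+\mu)$ and the stated expression \eqref{rate-separation}. The main obstacle I expect is the identification $\lambda_{\max}(AB(I-C)^{-1}) = 1$, which requires the Rayleigh-quotient step above to extract $D\ones = \ones$ from the two scalar normalizations in Assumption~\ref{assum:conditions_convergence_no_reg}ii; the rest of the proof is a routine monotonicity argument.
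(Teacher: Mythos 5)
Your proof is correct and follows essentially the same route as the paper's: establish $\lambda_{\max}(AB(I-C)^{-1})=1$ from $AB\preceq I-C$ together with $(I-C)^{-1/2}AB(I-C)^{-1/2}\ones=\ones$, then use the monotonicity of $q(D,\gamma)$ in $\gamma$ and maximize $\lambda_{\min}(D)$ subject to $0\prec D\preceq I$ and $\ones^\top D\ones=m$ to get $D=I$. Your Rayleigh-quotient argument extracting $D\ones=\ones$ (hence $AB\ones=\ones$) from the two scalar normalizations is a detail the paper asserts without proof, and it is a welcome addition.
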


\begin{proof}
Since $(I-C)^{-1/2} AB (I-C)^{-1/2} \ones= \ones$ and $AB \preceq I-C,$ we have $\lambda_{\max} (AB(I-C)^{-1}) =1$, which together with \eqref{eq:def_lambda_Th15} yield  
  \eqref{eq:def_lambda_simp}. Eq. \eqref{eq:def_lambda_simp_2}
 follows readily from the decreasing property of $q(D,\gamma)$ on $\gamma\in (0,\gamma^\star(D)]$, for any given $0\prec D\preceq I$. Finally,  \eqref{rate-separation} is the result of the following optimization problem: $\max_{D\in \mathbb{S}^{m}} \lambda_{\min}(D)$, subject to  $0\prec \D\preceq\I$ [Assumption \ref{assum:conditions_convergence_no_reg}(i)]  and  $\ones^\top\D \ones=m$ [Assumption \ref{assum:conditions_convergence_no_reg}(ii)], whose solution is $D=I$. 
\end{proof}
\vspace{-0.4cm}

\subsection{The general case $G\neq 0$}\label{sec:sc_G}
We establish now  linear convergence of   Algorithm \eqref{alg:g-ABC} applied to Problem \eqref{prob:dop_nonsmooth_same}, with $G\neq 0$. 
 {We introduce  the following assumption similar to Assumption~\ref{assum:conditions_convergence_no_reg} for $G = 0$.}
\begin{assum}\label{assum:conditions_convergence} 
The weight matrices $A\in \mathbb{R}^{m\times m},\,  B,\,C\in \mathbb{S}^{m}$ and the stepsize $\gamma$ satisfy: \begin{itemize} 
  \item[i)] $A=BD$ with $0 \prec D \preceq I$ and $D\in \mathbb{S}^{m}$; 
  \item[ii)] $\ones^\top D \ones = m$ and $\ones^\top B = \ones^\top$; 
  \item[iii)] $0 \preceq C\prec I$ and $\Null{{C}}=\Span{\ones}$;
  \item[iv)] $B$ and $C$ commute;
  \item[v)]   ${q(D,\gamma )}^2 B^2 \prec (I-C)$    and   $0<\gamma\leq  \gamma^\star(D)$,
\end{itemize}
where   $q(D,\gamma)$ and $\gamma^\star(D)$ are defined in  \eqref{eq:contract_factor_T_f} and (\ref{eq:opt_ss_D}), respectively.
\end{assum}

Condition  v) in Assumption \ref{assum:conditions_convergence} is slightly stronger than its counterpart in Assumption \ref{assum:conditions_convergence_no_reg} (as $BDB\prec B^2$). This is due to the complication of dealing with the nonsmooth function $G$ (the presence of the proximal operator $T_g$).  However, as shown in Corollary \ref{col:contraction_overall_optimal_choice} below, this does not affect the smallest achievable contraction rate, which coincides with the one attainable when $G=0$.   {Note that Assumption~\ref{assum:conditions_convergence} is satisfied by all the algorithms in Table \ref{tab:instances}.}



\begin{thm}[Linear rate for $T=T_C T_f T_g T_B$]
\label{thm:contraction_T_c_T_f}
 
  Consider Problem~\eqref{prob:dop_nonsmooth_same} under Assumption~\ref{assum:smooth_strong_reg} with $\mu>0$, whose optimal solution  is $x^\star$. Let $\{(\x^k,\z^k,Y^k)\}_{k\geq 0}$ be the sequence generated by Algorithm \eqref{alg:g-ABC} under Assumption~\ref{assum:conditions_convergence}. 
Then $\norm{\x^k-\ones x^{\star\top}}^2=\mathcal{O}(\delta^k)$, with    
\begin{equation} \label{eq:def_lambda_reg}
\begin{aligned}
\delta \triangleq  \max \left({q(D,\gamma)}^2\lambda_{\text{max}} (B^2(I-C)^{-1}), ~1-\lambda_2(C) \right), 
 \end{aligned}
\end{equation}
where $q(D,\gamma)$ is defined in~\eqref{eq:contract_factor_T_f}. 
\end{thm}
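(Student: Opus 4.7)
My plan is to parallel the proof of Theorem~\ref{thm:contraction_T_c_T_f_T_B}, now applied to the operator $T = T_C\,T_f\,T_g\,T_B$ identified in Proposition~\ref{lm:transform}. By that proposition, the auxiliary iterates obey $\widetilde U^{k+1} = T\,\widetilde U^k$ with fixed point $\widetilde U^\star = [(\widetilde Z^\star)^\top,\,(\sqrt C\,\widetilde Y^\star)^\top]^\top$, and $\x^k = \prox{\gamma g}{B\widetilde Z^k}$, $\x^\star = \prox{\gamma g}{B\widetilde Z^\star}$. Nonexpansiveness of the proximal operator together with $I-C\succ 0$ (Assumption~\ref{assum:conditions_convergence}(iii)) gives
\[
\norm{\x^k - \ones x^{\star\top}}^2 \le \norm{B(\widetilde Z^k - \widetilde Z^\star)}^2 \le \frac{\lambda_{\max}(B^2)}{\lambda_{\min}(I-C)}\,\norm{\widetilde U^k - \widetilde U^\star}^2_{\Lambda_{I-C}},
\]
so it suffices to show that $T$ is a $\delta$-contraction in the $\norm{\cdot}_{\Lambda_{I-C}}$ norm on the $T$-invariant subspace $\mathcal X \triangleq \{V : V_\ell \in \Span{\sqrt C}\}$, which contains every iterate (for $k\ge 1$) and the fixed point, since $T_B, T_g, T_f$ all fix the lower block while $T_C$ sends it to an element of $\Span{\sqrt C}$ (using that $\sqrt C$ and $I-C$ commute).

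Next I chain the four operator bounds on any $X, Y \in \mathcal X$. Lemma~\ref{lem:contraction_T_c} converts the $\Lambda_{I-C}$-norm into the $V_{I-C}$-norm of the image of $T_f T_g T_B$; Lemma~\ref{lem:contraction_T_f} replaces the unweighted upper block by $q(D,\gamma)^2\,\norm{\cdot}^2_D$ applied to $T_g T_B(X) - T_g T_B(Y)$ and leaves the $(I-C)$-weighted lower block untouched; using $D \preceq I$ I relax $\norm{\cdot}^2_D \le \norm{\cdot}^2$, so Lemma~\ref{lem:contraction_T_g} removes $T_g$ via unweighted nonexpansiveness; finally Lemma~\ref{lem:contraction_T_B} evaluates the upper block as $\norm{(X-Y)_u}^2_{B^2}$. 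Collecting,
\[
\norm{TX - TY}^2_{\Lambda_{I-C}} \le q(D,\gamma)^2\,\norm{(X-Y)_u}^2_{B^2} + \norm{(X-Y)_\ell}^2_{I-C}.
\]
Since $B$ and $C$ are symmetric and commute (Assumption~\ref{assum:conditions_convergence}(iv)), $B^2(I-C)^{-1}$ is symmetric and $\norm{v}^2_{B^2} \le \lambda_{\max}(B^2(I-C)^{-1})\,\norm{v}^2_{I-C}$; and since $\Span{\sqrt C} = \Span{\ones}^\perp$, restricting $I-C$ to $\ones^\perp$ gives $\norm{(X-Y)_\ell}^2_{I-C} \le (1-\lambda_2(C))\,\norm{(X-Y)_\ell}^2$. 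Each of the two contributions is then dominated by $\delta\,\norm{X-Y}^2_{\Lambda_{I-C}}$ with $\delta$ as in~\eqref{eq:def_lambda_reg}; iterating, with $\delta<1$ guaranteed by Assumption~\ref{assum:conditions_convergence}(v), closes the argument.

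The main difficulty is the interaction between the nonlinear proximal operator $T_g$ and the $D$-weighted metric produced by $T_f$. Since $\prox{\gamma g}{\cdot}$ does not commute with $D$, its nonexpansiveness cannot be invoked directly in the $D$-weighted norm and one must first absorb $D \preceq I$; this relaxation replaces the sharper factor $BDB = AB$ appearing in Theorem~\ref{thm:contraction_T_c_T_f_T_B} by $B^2$, which is precisely the strengthening of Assumption~\ref{assum:conditions_convergence}(v) relative to Assumption~\ref{assum:conditions_convergence_no_reg}(v). For ATC-type schemes with $D = I$ the two conditions coincide and the rate obtained for composite $G \neq 0$ recovers that of the smooth case.
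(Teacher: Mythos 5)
Your proposal is correct and follows essentially the same route as the paper: reduce to contractivity of $T=T_C T_f T_g T_B$ in the $\Lambda_{I-C}$-norm on the invariant set where the lower block lies in $\Span{\sqrt C}$, chain Lemmas~\ref{lem:contraction_T_c}--\ref{lem:contraction_T_B} in that order, insert the relaxation $D\preceq I$ before invoking the (unweighted) nonexpansiveness of $T_g$, and finish with $\norm{\cdot}^2_{B^2}\le\lambda_{\max}(B^2(I-C)^{-1})\norm{\cdot}^2_{I-C}$ for the upper block and $1-\lambda_2(C)$ for the lower block. Your closing observation — that the inability to apply $\prox{\gamma g}{\cdot}$ in the $D$-weighted metric is exactly what replaces $AB$ by $B^2$ in Assumption~\ref{assum:conditions_convergence}(v) — is the same point the paper makes when contrasting it with Assumption~\ref{assum:conditions_convergence_no_reg}(v).
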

 {The proof of Theorem~\ref{thm:contraction_T_c_T_f} is similar to that of Theorem~\ref{thm:contraction_T_c_T_f_T_B} and is  provided in the  Appendix.}

\begin{col}
\label{col:contraction_overall_optimal_choice}
Consider the setting of Theorem~\ref{thm:contraction_T_c_T_f}, and further assume  $B^2\preceq I-C$.  Then, the same conclusions 
as in   Corollary \ref{col:optimal_D}  hold for Algorithm \eqref{alg:g-ABC}.\vspace{-0.3cm} 
\end{col}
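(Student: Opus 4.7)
The plan is to reduce the rate expression $\delta$ in Theorem~\ref{thm:contraction_T_c_T_f} to the simpler form \eqref{eq:def_lambda_simp} obtained in Corollary~\ref{col:optimal_D}, so that the subsequent optimization over $\gamma$ and $D$ can be reused verbatim. The key claim to establish is that the additional assumption $B^2 \preceq I-C$ forces $\lambda_{\max}(B^2(I-C)^{-1}) = 1$.

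First I would observe that $B^2(I-C)^{-1}$ is similar to the symmetric matrix $(I-C)^{-1/2} B^2 (I-C)^{-1/2}$, and hence has real eigenvalues bounded above by $1$ under the hypothesis $B^2 \preceq I-C$. Then I would exhibit $\mathbf{1}$ as an eigenvector of eigenvalue exactly $1$: symmetry of $B$ together with $\mathbf{1}^\top B = \mathbf{1}^\top$ (Assumption~\ref{assum:conditions_convergence}) yields $B\mathbf{1}=\mathbf{1}$ and thus $B^2\mathbf{1}=\mathbf{1}$, while $\mathrm{Null}(C)=\mathrm{Span}(\mathbf{1})$ implies $C\mathbf{1}=0$ and therefore $(I-C)^{-1/2}\mathbf{1}=\mathbf{1}$. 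Combining the two bounds identifies the top eigenvalue as $1$, so the first argument of the max in \eqref{eq:def_lambda_reg} collapses to $q(D,\gamma)^2$, matching \eqref{eq:def_lambda_simp}.

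Once $\delta = \max(q(D,\gamma)^2,\,1-\lambda_2(C))$ is in hand, the remaining statements follow by recycling the optimization from Corollary~\ref{col:optimal_D} with no modification: for each admissible $D$, the map $\gamma\mapsto q(D,\gamma)$ is decreasing on $(0,\gamma^\star(D)]$ by Lemma~\ref{lem:contraction_T_f}, so $\gamma=\gamma^\star(D)$ minimizes $\delta$ and produces \eqref{eq:def_lambda_simp_2}; then the optimization $\max \lambda_{\min}(D)$ subject to $0\prec D\preceq I$ and $\mathbf{1}^\top D\mathbf{1}=m$ has $D=I$ as its unique maximizer, giving $\gamma=2/(L+\mu)$ and the smallest achievable contraction factor $\delta^\star$ of \eqref{rate-separation}.

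I do not anticipate a serious obstacle here: the nonsmooth term $G$ enters the analysis only through the proximal operator $T_g$, whose non-expansiveness (Lemma~\ref{lem:contraction_T_g}) has already been absorbed into Theorem~\ref{thm:contraction_T_c_T_f}, which is why condition v) of Assumption~\ref{assum:conditions_convergence} replaces $q(D,\gamma)^2 AB \prec I-C$ by the slightly stronger $q(D,\gamma)^2 B^2 \prec I-C$. The only subtlety is the spectral argument identifying $\mathbf{1}$ as the Perron-like eigenvector of $B^2(I-C)^{-1}$, which parallels the one used in Corollary~\ref{col:optimal_D} for $AB(I-C)^{-1}$. The conceptual takeaway is that, under the mild spectral and commutation conditions on $B$ and $C$, the presence of a nonsmooth $G$ does not degrade the contraction rate compared with the smooth case.
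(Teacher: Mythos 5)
Your proposal is correct and follows exactly the argument the paper intends: it mirrors the proof of Corollary~\ref{col:optimal_D}, replacing $AB(I-C)^{-1}$ by $B^2(I-C)^{-1}$ and using $B\mathbf{1}=\mathbf{1}$, $C\mathbf{1}=0$ together with $B^2\preceq I-C$ to pin $\lambda_{\max}(B^2(I-C)^{-1})=1$, after which the optimization over $\gamma$ and $D$ is unchanged. No gaps.
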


\subsection{Discussion}\label{sec:discussion} 

\subsubsection{Unified convergence conditions}  Theorems~\ref{thm:contraction_T_c_T_f_T_B} and \ref{thm:contraction_T_c_T_f}    
offer a unified platform for the analysis and design of a gamut of linearly convergence algorithms--all the schemes, new and old,  that can be written in the form \eqref{alg:ABC_no_G} and \eqref{alg:g-ABC}  satisfying Assumption \ref{assum:conditions_convergence_no_reg} and \ref{assum:conditions_convergence}, respectively. For instance, our convergence results 
embrace   {\it both} ATC and CTA algorithms, solving  either smooth ($G=0$)  or {\it composite} ($G\neq 0$) optimization problems. This improves on \cite{alghunaim2019linearly} and \cite{alghunaim2019decentralized} 
and contrasts the majority of the literature, wherein  proposed algorithms have been  generally studied in isolation, resulting in ad-hoc convergence conditions and rates. 
Our  results are instead  widely applicable--e.g., to all the algorithms listed in Table~\ref{tab:optim_linear_rate}--and tighter than existing rate expressions; see  Sec.~\ref{subsec_comparison}. \smallskip 

\subsubsection{On the rate expression}

 
We comment  the  expression of  the rate  focusing on Theorem \ref{thm:contraction_T_c_T_f}   and Corollary \ref{col:contraction_overall_optimal_choice} ($G\neq 0$); same conclusions can be drawn for Algorithm \eqref{alg:ABC_no_G} (Theorem \ref{thm:contraction_T_c_T_f_T_B}  and Corollary \ref{col:optimal_D}).   Theorem \ref{thm:contraction_T_c_T_f}    provides the explicit expression of the linear rate provably achievable by Algorithm \eqref{alg:g-ABC}, for a given choice of  the weight matrices $A$, $B$, and $C$ and stepsize $\gamma$ (satisfying Assumption \ref{assum:conditions_convergence}).   In general,   this rate depends on both optimization parameters ($L$ and $\mu$) and network-related quantities ($A$, $B$, and $C$); furthermore,  feasible stepsize values and network parameters are coupled by Assumption \ref{assum:conditions_convergence}v). {\bf CTA-based schemes:} This is consistent with existing convergence results of CTA-based algorithms (known only for $G=0$), which are special cases of Algorithm \eqref{alg:ABC_no_G}. For instance, consider  EXTRA~\cite{shi2015extra} and DIGing~\cite{nedich2016achieving} (corresponding to Algorithm \eqref{alg:ABC_no_G} with  $B=I$, cf. Table~\ref{tab:optim_linear_rate}): $\gamma$,  $C$ and $D$ are coupled via   the condition ${q(D,\gamma )}^2\prec (I-C)$, instrumental to achieve linear rate. 
{\bf ATC-based schemes:} For algorithms in the ATC form, i.e.,   $A=B$, less restrictive conditions are required.  For instance, when Assumption \ref{assum:conditions_convergence}v) is satisfied by $B^2\prec I-C$--a condition that is met by several algorithms in Table~\ref{tab:optim_linear_rate}--the stepsize can be chosen  in the larger region $[0,\gamma^\star(D)]$, 
resulting in the smaller rate  $\max(q(D,\gamma),1-\lambda_2(C))\geq \max(q^\star(D),1-\lambda_2(C))$ (recall that, in such a case,  $\lambda_{\text{max}} (B^2(I-C)^{-1})=1$), where the lower bound is achieved when $\gamma=\gamma^\star(D)$ (cf. Corollary \ref{col:contraction_overall_optimal_choice}).

 On the other hand, when the algorithm parameters can be freely designed, Corollary \ref{col:optimal_D} offers the ``optimal'' choice, resulting in the smallest contraction factor, as in (\ref{rate-separation}).  
   This  instance enjoys two desirable properties, namely: 
   
 \textbf{(i) Network-independent stepsize:} The    stepsize $\gamma^\star$ in Corollary \ref{col:optimal_D} does not depend on the network parameters but only on the optimization and its value coincides with the optimal stepsize of the centralized proximal-gradient algorithm. This is a major advantage over current distributed schemes applicable to \eqref{prob:dop_nonsmooth_same} (but with $G\neq 0$) and complements the results in  \cite{li2017decentralized}, whose algorithm  however  cannot deal with  the non-smooth term  $G$ and use more stringent stepsize.
  
 \textbf{(ii) Rate-separation:}  The   rate (\ref{rate-separation}) is determined by the worst rate  between the one  due to the communication $[1-\lambda_2(\C)]$ and that of the optimization $[((\kappa-1)/(\kappa+1))^2]$. This separation is the key enabler   for our distributed scheme to achieve the convergence rate of the centralized proximal gradient algorithm-we elaborate on this property  next. 

   \subsubsection{Balancing computation and communications}\label{sec:tradeoff} Note  that    $\rho_{\texttt{opt}}\triangleq (\kappa-1)/(\kappa+1)$  is the  rate of the centralized proximal-gradient algorithm applied to   \eqref{prob:dop_nonsmooth_same}, under Assumption 1.
This means that if the network is ``sufficiently connected'', specifically   $1-\lambda_2(\C)\leq \rho_{\texttt{opt}}^2$, the proposed algorithm   converges at  the \emph{desired}  linear rate $\rho_{\texttt{opt}}$. 
 On the other hand, when  
$1-\lambda_2(\C)>\rho_{\texttt{opt}}^2$, one can still achieve the centralized rate $\rho_{\texttt{opt}}$   by enabling multiple (finite) rounds of communications per proximal gradient evaluations. 
 Two strategies are:  1) performing multiple rounds of  consensus using each time the same weight matrix; or 2) employing acceleration via Chebyshev polynomials. 
\textbf{1) Multiple rounds of consensus:}
Given a weight matrix $W\in \mathcal{W}_{\mathcal{G}}$, as concrete example,  consider the case $W\in\mathbb{S}^m_{++}$ and    $\A=\B=\I-\C=W^K$, with $K\geq 1$, which implies $\B^2\preceq \I-\C$ (cf. Corollary~\ref{col:optimal_D}).  The resulting algorithm   will require 
$K$ rounds of  communications  (each of them using  $\W$) per gradient evaluation. Denote 
$\rho_{\texttt{com}}\triangleq\lambda_{\max}(\W-\J)$; we have $1-\lambda_2(\C)=\lambda_{\max}(W^K-\J)=\rho^K_{\texttt{com}}$. The value of $K$ is chosen to minimize the resulting rate $\lambda$ [cf. (\ref{rate-separation})], i.e., such that $\rho_{\texttt{com}}^K\leq\rho_{\texttt{opt}}^2$, which leads to $K=\lceil\log_{\rho_{\texttt{com}}}({\rho^2_{\texttt{opt}}})\rceil$.  
\noindent \textbf{2) Chebyshev acceleration:}
To further reduce the  communication cost, we can leverage Chebyshev acceleration~\cite{auzinger2011iterative}. As specific example, consider the case $\W\in\mathbb{S}^m$ is invertible;  we set   $\A=\P_K(\W)$ and $P_K(1)=1$ (the latter is to ensure the double stochasticity of $\A$), with $P_K\in\mathbb{P}_K$. 
This leads to $1-\lambda_2(C)=\lambda_{\max}(\A^2-\J)$.  {The idea of Chebyshev acceleration is to find the ``optimal'' polynomial $P_K$ such that $\lambda_{\max}(\A^2-\J)$ is minimized, 
 i.e.,
$\rho_C\triangleq\min_{P_K\in \mathbb{P}_K,P_K(1)=1}\max_{t\in[-\rho_{\texttt{com}},\rho_{\texttt{com}}]} \abs{P_K(t)}$.
The optimal solution of this problem  is  $P_K(x)={T_K(\frac{x}{\rho_{\texttt{com}}})}/{T_K(\frac{1}{\rho_{\texttt{com}}})}$    \cite[Theorem 6.2]{auzinger2011iterative}, with $\alpha'=-\rho_{\texttt{com}}$, $\beta'=\rho_{\texttt{com}},\gamma'=1$ (which are certain parameters therein),
where $T_K$ is the $K$-order Chebyshev polynomials that can be computed in a distributed manner via the following  iterates \cite{auzinger2011iterative,scaman17optimal}:
$T_{k+1}(\xi) =2\xi \, T_k(\xi)-T_{k-1}(\xi),$  $k\geq 1$,
with $T_0(\xi)=1$, $T_1(\xi)=\xi$. 
  Also, invoking \cite[Corollary 6.3]{auzinger2011iterative}, we have
$\rho_C=\frac{2c^K}{1+c^{2K}}$,
where $c=\frac{\sqrt{\vartheta}-1}{\sqrt{\vartheta}+1},\vartheta=\frac{1+\rho_{\texttt{com}}}{1-\rho_{\texttt{com}}}$. Thus, the minimum value of $K$ that leads to $\rho_C\leq\rho^2_{\texttt{opt}}$ can be obtained as $K=\ceil{\log_c\bracket{ 1/\rho^2_{\texttt{opt}}+\sqrt{1/\rho^4_{\texttt{opt}}-1}   }}$. Note that to be used, $A$ must be returned as nonsingular. More details of Chebyshev acceleration applied to the $ABC$-Algorithm along with some numerical results can be found in \cite{XuSunScutariJ,XUCAMSAP_arxiv}.

\subsubsection{Improvement upon existing results and tuning recommendations}\label{subsec_comparison} 
Theorems~\ref{thm:contraction_T_c_T_f_T_B} and \ref{thm:contraction_T_c_T_f}    improve upon existing convergence conditions and rate bounds. A comparison with   notable  distributed algorithms in the literature is presented in Table~\ref{tab:optim_linear_rate}. Since all the schemes therein are special cases of  Algorithm \eqref{alg:ABC_no_G} [with the exception of \cite{alghunaim2019linearly}  that is an instance of Algorithm \eqref{alg:g-ABC}]  (cf. Table \ref{tab:instances})  and satisfy  Assumption \ref{assum:conditions_convergence_no_reg} (or Assumption \ref{assum:conditions_convergence}), one can readily apply Theorem \ref{thm:contraction_T_c_T_f_T_B} (or Theorem \ref{thm:contraction_T_c_T_f}) and determine, for each of them,  a new  stepsize range  and achievable rate: the column ``Stepsize/this paper (optimal, Corollary \ref{col:optimal_D})'' reports   the stepsize value $\gamma^\star(D)$ for the different algorithms  (i.e., given  $B$, $C$ and $D$) while the column ``Rate/$\delta$ this paper'' shows the resulting provably  rate, as given in \eqref{eq:def_lambda_simp_2}.   A direct comparison with the columns ``Stepsize/literature (upper bound)'' and ``Rate/$\delta$, literature'' respectively,  shows that our theorems provide strictly larger ranges for the stepsize of EXTRA~\cite{shi2015extra}   NEXT~\cite{di2016next}/AugDGM~\cite{xu2015augmented,xu2017convergence}   
and Exact Diffusion~\cite{yuan2018exact_p1}, and faster linear rates for {\it all} the algorithms in the table. 

Furthermore, since the rates in the column ``Rate/$\delta$ this paper'' are obtained for the optimal  stepsize value (in the sense of Corollary \ref{col:optimal_D}) of the associated algorithm,  Table~\ref{tab:optim_linear_rate}  also serves as comparison of the convergence rates {\it provably achievable} by the different algorithms. For instance, we notice that, 
 although EXTRA and NIDS both require one communication per gradient evaluation,   NIDS is provably faster, achieving a linear rate of $\delta^\star \log (1/\epsilon)$, with $\delta^\star$ defined in (\ref{rate-separation}), versus the linear rate $(\kappa/(1-\rho)) \log (1/\epsilon)$ of EXTRA. In Sec. \ref{sec:sc_simul} we show that the ranking based on our theoretical findings in  Table~\ref{tab:optim_linear_rate} is   reflected by our numerical experiments--see Fig. \ref{fig_sc_pd}

 \subsubsection{Generalizing existing algorithms to the case $G\neq 0$}
  All the algorithms listed in   Table I but \cite{di2016next} and \cite{alghunaim2019linearly}  are designed for Problem \eqref{prob:dop_nonsmooth_same}  with $G= 0$. Since they are special cases of our general framework and Algorithm (\ref{alg:g-ABC}) can deal with the case  $G\neq 0$, they  inherit the same feature. Their ``proximal'' extension is given by \eqref{eq:g-pd-ATC_eliminate_y}, with the matrices $A$, $B$, and $C$ as in original algorithm (cf. Table II).  Theorem~\ref{thm:contraction_T_c_T_f} and Corollary~\ref{col:contraction_overall_optimal_choice} show that these new algorithms enjoy the same convergence rates of their ``no-proximal'' counterpart.    
For instance, consider  {AugDGM},  corresponding to Algorithm \eqref{alg:ABC_no_G} with $A=B=W^2,\,D=I,\,C=(I-W)^2$; it   clearly satisfies  Assumption~\ref{assum:conditions_convergence} for $W\succ 0$. Its extension to the general optimization with $G\neq 0$ comes readily  substituting these choices of $A,B,C$ into \eqref{eq:g-pd-ATC_eliminate_y} (or Algorithm \ref{alg:ABC_no_G}), yielding  
\begin{equation}\label{eq:g-gradtrack-ATC_eliminate_y_prox}
\begin{aligned}
& \x^{k+1}=\prox{\gamma g}{\z^{k+1}}, \\
& \z^{k+2}=(2\W-\W^2)\z^{k+1}+W^2(\x^{k+1}-\x^k)\nonumber\\
&\quad\quad\quad  -\gamma \,W^2(\nabla f(\x^{k+1})-\nabla f(\x^k)).
\end{aligned}
\end{equation}
As second example, consider   the primal-dual scheme such as NIDS and Exact Diffusion; they correspond to   Algorithm \eqref{alg:ABC_no_G} with  $A=B=\frac{I+W}{2},C=\frac{I-W}{2}$.  Similarly,  we can introduce  their ``proximal'' version as follows:
\begin{equation}\label{eq:g-gradtrack-ATC_eliminate_y_prox}
\begin{aligned}
 \x^k&=\prox{\gamma g}{\z^k}, \\
 \z^{k+2}&=\frac{\I+\W}{2}\left(\z^{k+1}+\x^{k+1}-\x^k\nonumber\right.\\
&\quad \left. -\gamma(\nabla f(\x^{k+1})-\nabla f(\x^k)\right).
\end{aligned}
\end{equation}

\subsection{Application to statistical learning} 
We customize our rate results to the instance of \eqref{prob:dop_nonsmooth_same} modeling    statistical learning tasks over networks. This is an example where the local strong convexity and smoothness constants of the agent functions are different; sill, we will show that, when the data  sets across the agents are sufficiently similar, the rate achieved by the proposed algorithm is within $\widetilde{\mathcal{O}}(1/\sqrt{n})$ the rate of the centralized gradient algorithm.

Suppose each agent $i$ has access to $n$ $i.i.d.$ samples $\{z_j\}_{j \in \mathcal{D}_i}$ following  the distribution $\mathcal{P}.$  
The goal is to learn a model parameter $x$ using  the samples from all the agents; mathematically, we aim at solving the following empirical risk minimization problem:  $\min_{x\in \mathbb{R}^d} \sum_{i\in[m]} \sum_{j\in \mathcal{D}_i} \ell(x;z_j),$ where $\ell(x;z_j)$ is the  loss function measuring the fitness of the statistical model parameterized by $x$ to sample $z_j$; we  assume each $\ell(x;z_j)$ to be quadratic  in $x$ and satisfy $\widetilde{\mu}I \preceq \nabla^2 \ell(x;z) \preceq \widetilde{L}I$, for all $z$. 
This problem is an instance of Problem~\eqref{prob:dop_nonsmooth_same} with $f_i(x) \triangleq \sum_{j\in \mathcal{D}_i} \ell(x;z_j)$.
 Denote the largest and the smallest eigenvalues of $\nabla^2 f_i(x)$ (resp. $\nabla^2 F(x)$) as $L_i$ and $\mu_i$ (resp. $\bar{L}$  and $\bar{\mu}$).
  Then,  each $f_i(x)$ is $\mu \triangleq \min_{i\in [m]}\mu_i$-strongly convex and $L \triangleq \max_{i\in [m]} L_i$-smooth.  Recalling $\kappa = L/\mu,$ the rate in \eqref{rate-separation} reduces to $\left( ({\kappa-1})/({\kappa+1}) \right)^2$, when   $1-\lambda_2(C) \leq \left( ({\kappa-1})/({\kappa+1}) \right)^2$ (possibly using multiple rounds of communications), resulting in      $\mathcal{O}\left( \kappa \log\left({1}/{\epsilon}\right)\right)$ overall number of gradient evaluations.  
  On the other hand, the complexity of the 
 centralized  gradient descent algorithm reads $\mathcal{O}\left( \frac{\bar{L}}{\bar{\mu}}\log\left(\frac{1}{\epsilon}\right)\right)$. To compare these two quantities, compute
\begin{align*}
& \abs{\frac{L}{\mu} - \frac{\bar{L}}{\bar{\mu}} }= \frac{\abs{L \bar{\mu} - \bar{L}\mu}}{\mu \bar{\mu}} \leq \frac{\abs{L- \bar{L}}\bar{\mu} + \bar{L}\abs{\bar{\mu}- \mu}}{\widetilde{\mu}^2} \\
&  \leq \frac{1}{\widetilde{\mu}^2} \left(\bar{\mu} \max_{i\in[m]}\abs{L_i - \bar{L} }+ \bar{L}\max_{i\in[m]} \abs{\mu_i - \bar{\mu}} \right) \\
& \stackrel{(a)}{\leq}  \frac{\bar{\mu} + \bar{L}}{\widetilde{\mu}^2}  \sqrt{\frac{32\widetilde{L}^2 \log (dm/\delta) }{n},  } \quad \text{with probability $1-\delta$}\\
&  \leq 8\sqrt{2}\, \frac{\widetilde{L}^2}{\widetilde{\mu}^2}  \sqrt{\frac{ \log (dm/\delta) }{n},  }
\end{align*}
where in (a) we used the following two facts:  \cite[Corollary~6.3.8]{horn2012matrix} \begin{align}\label{eq:pert_theo}
\max_{i\in[m]} \left(\abs{\mu_i-\bar{\mu}},~\abs{L_i- \bar{L}} \right) \leq  \norm{\nabla^2 f_i(x) - \nabla^2 f(x)},
\end{align}
and \cite[Lemma~2]{shamir2014communication}
\begin{align}\label{eq:concentra}
\max_{i\in[m]} \norm{\nabla^2 f_i(x) - \nabla^2 f(x)}\leq \sqrt{\frac{32\widetilde{L}^2 \log (dm/\delta) }{n}  }
\end{align}with probability at least $1-\delta$.
Therefore, the complexity of our algorithm becomes $\mathcal{O}\left( \left(\frac{\bar{L}}{\bar{\mu}}+ \widetilde{\mathcal{O}} \left( \frac{\bar{L}^2}{\bar{\mu}^2} \frac{1}{\sqrt{n}} \right)\right)\cdot\log\left(\frac{1}{\epsilon}\right)\right)$, with $\widetilde{\mathcal{O}}$ hiding the factor $\log(dm/\delta)$. This shows that when agents have enough data locally ($n$ is large), the above rate is of the same order of that of the centralized  gradient descent algorithm.

\section{Sublinear Convergence (convex case)}\label{sec:sublinear}
We consider now Problem~\eqref{prob:dop_nonsmooth_same} when $f_i$'s are assumed to be convex ($\mu=0$) but not strongly-convex.
We study the sublinear convergence for two splitting schemes, namely: i)   $T=T_CT_fT_B$ applied to  \eqref{prob:dop_nonsmooth_same} with $G=0$;  and ii)  $T=T_CT_gT_fT_B$ applied to \eqref{prob:dop_nonsmooth_same} with $G\neq 0$.\vspace{-0.3cm}
\subsection{Convergence under $G=0$}
We establish  sublinear convergence of  Algorithm~\eqref{alg:ABC_no_G} (corresponding to $T=T_CT_fT_B$)  under the following assumption. 
\begin{assum}\label{assum:sublinear} 
The weight matrices $A\in \mathbb{R}^{m\times m}$, $B,\,C\in \mathbb{S}^{m}$ satisfy: 
\begin{itemize} 
  \item[i)] $A=BD$, with $B \succeq 0$ and $0\prec D\in \mathbb{S}^{m}$; 
  \item[ii)] $ D \ones = \ones$ and $\ones^\top B = \ones^\top$; 
  \item[iii)]  $C \succeq 0$ and  $\Null{{C}}=\Span{\ones}$;
  \item[iv)] $B$ and $C$ commute;
  \item[v)]    $I-\frac{1}{2}C- \sqrt{B}D\sqrt{B} \succeq 0$ \newline ($\Leftrightarrow I-\frac{1}{2}C- A \succeq 0$, if $B$ commutes with $D$).
\end{itemize}

\end{assum}

We quantify the progress of algorithms towards optimality in this setting using the following merit function:
\[
M(\x)\triangleq\max \left\{\norm{(I-J)\x}\norm{\nabla f(X^\star)},\abs{f(\x)-f(X^\star)}\right\},
\]
where $X^\star \triangleq \ones (x^\star)^\top$. Note that the first term encodes consensus errors while the second term measures the optimality gap.

We begin by rewriting Algorithm~\eqref{alg:ABC_no_G} in an equivalent form given in Lemma~\ref{lem:ABC_equiv_form}, which does not have a mixing matrix multiplied to the gradient term.
\begin{lem}~\label{lem:ABC_equiv_form}
	Suppose Assumption~\ref{assum:B_C_commute} holds. Then, Algorithm~\eqref{alg:ABC_no_G} can be rewritten as
	\begin{subequations}\label{eq:ABC_alg_equi} 
		\begin{align}
		& X^k = B\underline{X}^k, \label{eq:ABC_alg_equi_X}\\
		& \underline{X}^{k+1}= D X^k-\gamma (\nabla f(X^k)+\underline{Y}^k), \label{eq:ABC_alg_equi_z}\\
		& \underline{Y}^{k+1}=\underline{Y}^k+\frac{1}{\gamma}C\underline{X}^{k+1} .\label{eq:ABC_alg_equi_Y}
		\end{align}
	\end{subequations}
\end{lem}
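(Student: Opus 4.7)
The plan is a direct substitution-and-induction argument exploiting the two structural conditions in Assumption~\ref{assum:B_C_commute}, namely $A = BD$ and $BC = CB$. I would introduce auxiliary sequences $\{\underline{X}^k\}$ and $\{\underline{Y}^k\}$ defined so that $X^k = B\underline{X}^k$ and $Y^k = \gamma B \underline{Y}^k$, initialize $\underline{Y}^0 = 0$ (consistent with $Y^0 = 0$), and pick $\underline{X}^0$ to be any preimage of $X^0$ under $B$.

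The key identity to drive the induction is to factor $B$ out of each update of \eqref{alg:ABC_no_G}. Using the inductive hypothesis $X^k = B\underline{X}^k$ and $Y^k = \gamma B\underline{Y}^k$ together with $A = BD$ in the primal update gives
\[
X^{k+1} = BD X^k - \gamma B\nabla f(X^k) - \gamma B\underline{Y}^k = B\bigl(D X^k - \gamma(\nabla f(X^k) + \underline{Y}^k)\bigr),
\]
which identifies the bracketed quantity as $\underline{X}^{k+1}$ and reproduces \eqref{eq:ABC_alg_equi_z} along with $X^{k+1} = B\underline{X}^{k+1}$. For the dual, the commutation $CB = BC$ yields
\[
Y^{k+1} = Y^k + C X^{k+1} = \gamma B\underline{Y}^k + C B\underline{X}^{k+1} = \gamma B\bigl(\underline{Y}^k + \tfrac{1}{\gamma} C \underline{X}^{k+1}\bigr),
\]
so the bracketed quantity is the required $\underline{Y}^{k+1}$, closing the induction.

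I do not anticipate any substantive obstacle: the argument is purely algebraic, and the two conditions of Assumption~\ref{assum:B_C_commute} are exactly what is needed for the two $B$-factorizations above. The only minor care concerns the initialization when $B$ is singular, in which case $\underline{X}^0$ is non-unique; however, any valid choice produces the same primal sequence $\{X^k\}_{k \geq 1}$, because these iterates depend on $\underline{X}^0$ only through $B\underline{X}^0 = X^0$, and the original recursion \eqref{alg:ABC_no_G} is fed only the products $B\underline{X}^k$ and $\gamma B\underline{Y}^k$.
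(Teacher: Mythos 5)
Your proposal is correct and follows essentially the same route as the paper: the paper's proof also argues by induction that $\Span{X^k},\Span{Y^k}\subset\Span{B}$ for $k\geq 1$ (using $A=BD$ and $Y^0=0$, plus the commutation of $B$ and $C$) and then sets $X^k=B\underline{X}^k$, $Y^k=\gamma B\underline{Y}^k$ to read off the equivalent recursion. Your added remark on the non-uniqueness of $\underline{X}^0$ when $B$ is singular is harmless but unnecessary, since \eqref{eq:ABC_alg_equi_z} at $k=0$ only consumes $X^0$ itself and \eqref{eq:ABC_alg_equi_X} is only needed for $k\geq 1$.
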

\begin{proof}
	since $Y^0 = 0$, we know $\Span{X^1} , \Span{Y^1}\subset \Span{B}$.  It is easy then to deduce from induction that $\Span{X^k}, \, \Span{Y^k} \subset \Span{B},\, \forall k.$  Setting $Y^k=\gamma B\underline{Y}^k$ and $X^k=B\underline{X}^k$ leads to this equivalent form.
\end{proof}

Define $\phi(X,Y) = f(X)+\innprod{Y}{X}.$  In  Lemma~\ref{lem:basic_inequality_I_sublinear} and~\ref{lem:basic_inequality_II_sublinear} below, we  establish  two fundamental inequalities on $\phi(X^k,Y)$ and $\phi(X,Y)$ for $X\in \Span{\ones}$ and $Y\in\Span{C}$, instrumental to prove the sublinear rate. The proofs can be found in the Appendix.

\begin{lem}\label{lem:basic_inequality_I_sublinear}
	Consider the setting of Theorem~\ref{thm:unified_alg_sublinear_no_reg}, let $\{ X^k, \underline{X}^k, \underline{Y}^k\}_{k \in \mathbb{N}_+}$ be the sequence generated by Algorithm~\eqref{eq:ABC_alg_equi} under Assumption~\ref{assum:sublinear}.   Then,  it holds: 
	\begin{equation}\label{eq:basic_inequality_I_sublinear}
	\begin{aligned}
	&\phi(X^{k+1},Y) &\\
	&\leq \phi(X,Y)-\frac{1}{\gamma}\norm{\underline{X}^{k+1}}_{B-BC-AB}^2 \\
	& ~~~ -\frac{1}{\gamma}\innprod{X^{k+1}-X^k}{X^{k+1}-X}_D\\
	&~~~-\gamma\innprod{\underline{Y}^{k+1}-Y}{\underline{Y}^{k+1}-\underline{Y}^k}_{B'}+\frac{L}{2}\norm{X^{k+1}-X^k}^2,
	\end{aligned}
	\end{equation}
	for all $ \,X\in\Span{\ones}$ and $ Y\in\Span{C}$, where $B'=(C+bJ)^{-1}B$,   $b\geq 2$.
\end{lem}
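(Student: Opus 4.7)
The plan is to combine the standard smooth-convex upper bound on $f$ with the algorithm's primal update to expose a ``gradient + multiplier'' decomposition, rearrange using the fact that $B$ and $D$ fix the consensual space $\Span{\ones}$, and finally rewrite the dual cross-term as an inner product of dual increments by exploiting the singular relation $\gamma(\underline{Y}^{k+1}-\underline{Y}^k) = C\underline{X}^{k+1}$.

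First I would combine $L$-smoothness at $X^k$ with convexity of $f$ at the same point to obtain the one-line bound
\[
f(X^{k+1}) - f(X) \leq \innprod{\nabla f(X^k)}{X^{k+1}-X} + \frac{L}{2}\|X^{k+1}-X^k\|^2.
\]
Adding $\innprod{Y}{X^{k+1}-X}$ to both sides and substituting $\gamma\nabla f(X^k) = DX^k - \underline{X}^{k+1} - \gamma\underline{Y}^k$ from \eqref{eq:ABC_alg_equi_z} splits the resulting bound on $\phi(X^{k+1},Y)-\phi(X,Y)$ into a primal cross-term $\gamma^{-1}\innprod{DX^k-\underline{X}^{k+1}}{X^{k+1}-X}$ and a dual cross-term $\innprod{Y-\underline{Y}^k}{X^{k+1}-X}$, which I handle separately.

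For the primal cross-term, I would use $B\ones = D\ones = \ones$ (a consequence of Assumption~\ref{assum:sublinear}(ii) and the symmetry of $B,D$), which gives $BX = DX = X$ for every $X\in\Span{\ones}$. Combined with $X^{k+1}=B\underline{X}^{k+1}$ and the identity $BDB = AB$, a short computation using the polarization-type identity $\innprod{X^{k+1}-X^k}{X^{k+1}-X}_D = \|X^{k+1}\|_D^2 - \innprod{X^k}{X^{k+1}}_D - \innprod{X^{k+1}-X^k}{X}$ reduces this cross-term to
\[
\tfrac{1}{\gamma}\innprod{DX^k-\underline{X}^{k+1}}{X^{k+1}-X} = -\tfrac{1}{\gamma}\|\underline{X}^{k+1}\|_{B-AB}^2 - \tfrac{1}{\gamma}\innprod{X^{k+1}-X^k}{X^{k+1}-X}_D.
\]
The dual cross-term needs more care. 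Induction on $\gamma(\underline{Y}^{k+1}-\underline{Y}^k) = C\underline{X}^{k+1}$ with $\underline{Y}^0=0$ shows $\underline{Y}^k\in\Span{C}$ for every $k$, and since also $Y\in\Span{C}$ while $X\in\Span{\ones}\perp\Span{C}$, the $X$-component drops out. Because $B$ commutes with $C$ it preserves $\Span{C}$, so only the projection $(I-J)\underline{X}^{k+1}$ of $\underline{X}^{k+1}$ onto $\Span{C}$ contributes, giving $\innprod{Y-\underline{Y}^k}{X^{k+1}-X} = \innprod{Y-\underline{Y}^k}{(I-J)\underline{X}^{k+1}}_B$.

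The critical step is now to rewrite $(I-J)\underline{X}^{k+1}$ as a dual increment. Direct inversion of $C$ is impossible because $C$ is singular on $\Span{\ones}$; I would therefore use the regularized inverse $(C+bJ)^{-1}$, observing that $C+bJ$ acts as $C$ on $\Span{C}$ and as $bI$ on $\Span{\ones}$, hence $C(C+bJ)^{-1}=I-J$ for any $b>0$. Combined with the dual update this produces $(I-J)\underline{X}^{k+1} = \gamma(C+bJ)^{-1}(\underline{Y}^{k+1}-\underline{Y}^k)$; commuting $B$ past $(C+bJ)^{-1}$ (legal because $B$ commutes with both $C$ and $J$) introduces $B' = (C+bJ)^{-1}B$ and the identity $\innprod{Y-\underline{Y}^k}{X^{k+1}-X} = \gamma\innprod{Y-\underline{Y}^k}{\underline{Y}^{k+1}-\underline{Y}^k}_{B'}$. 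Finally, writing $\underline{Y}^k = \underline{Y}^{k+1} - (\underline{Y}^{k+1}-\underline{Y}^k)$ converts this into the target $-\gamma\innprod{\underline{Y}^{k+1}-Y}{\underline{Y}^{k+1}-\underline{Y}^k}_{B'}$ plus a residual $\gamma\|\underline{Y}^{k+1}-\underline{Y}^k\|_{B'}^2$, which collapses (via $CB'C=(I-J)BC=BC$, using $JC=0$) to $\gamma^{-1}\|\underline{X}^{k+1}\|_{BC}^2$. Assembling the two primal terms $-\gamma^{-1}\|\underline{X}^{k+1}\|_{B-AB}^2$ and $\gamma^{-1}\|\underline{X}^{k+1}\|_{BC}^2$ yields $-\gamma^{-1}\|\underline{X}^{k+1}\|_{B-BC-AB}^2$ and proves \eqref{eq:basic_inequality_I_sublinear}. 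The main obstacle is precisely this last step: tracking which projection acts on each factor and verifying that the commutation relations permit the clean emergence of $B'$; the particular value of $b$ plays no role here (any $b>0$ suffices), and the stronger requirement $b\geq 2$ will only be invoked in the subsequent telescoping argument used to prove the sublinear rate.
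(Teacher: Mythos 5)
Your proof is correct and follows essentially the same route as the paper's: the smoothness-plus-convexity bound, substitution of the primal update for $\gamma\nabla f(X^k)$, the identity $(C+bJ)^{-1}C=I-J$ to introduce $B'$, and collection of the quadratic forms via $BDB=AB$. The only difference is bookkeeping order — the paper shifts $\underline{Y}^k\to\underline{Y}^{k+1}$ early, carrying $(I-C)\underline{X}^{k+1}$ in the primal part, whereas you shift late and recover the $\frac{1}{\gamma}\|\underline{X}^{k+1}\|_{BC}^2$ correction from the dual residual $\gamma\|\underline{Y}^{k+1}-\underline{Y}^k\|_{B'}^2$; both yield the same final expression, and your observation that $b\geq 2$ is only needed later is also accurate.
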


\begin{lem}\label{lem:basic_inequality_II_sublinear}
	Under the same conditions as in Lemma~\ref{lem:basic_inequality_I_sublinear}, if $\gamma\leq \frac{\lambda_{\min}(D)}{L}$, then 
	\begin{equation}\label{eq:basic_inequality_II_sublinear}
	\begin{aligned}
	& \phi(\widehat{X}^{k},Y)-\phi(X,Y)\\
	& \leq \frac{1}{2k}\Big(\frac{1}{\gamma}\norm{X^0-X}^2_D+\gamma\frac{\rho(B-J)}{\lambda_2(C)}\norm{Y}^2\Big),
	\end{aligned}
	\end{equation}
	for all $ X\in\Span{\ones}$ and $ Y\in\Span{C}$, where $\widehat{X}^{k} \triangleq \frac{1}{k} \sum_{t=1}^k X^t$.
\end{lem}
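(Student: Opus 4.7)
The plan is to start from the one-step inequality \eqref{eq:basic_inequality_I_sublinear} of Lemma~\ref{lem:basic_inequality_I_sublinear}, sum it over a block of iterations, and pass to the averaged iterate via convexity of $\phi(\cdot,Y)$. The inner-product terms on the right-hand side of \eqref{eq:basic_inequality_I_sublinear} are telescopic once rewritten with the identity $2\langle a-b, a-c\rangle_H = \|a-b\|_H^2 + \|a-c\|_H^2 - \|b-c\|_H^2$. Applying this to $-\tfrac{1}{\gamma}\langle X^{t+1}-X^t, X^{t+1}-X\rangle_D$ and to $-\gamma\langle \underline{Y}^{t+1}-Y, \underline{Y}^{t+1}-\underline{Y}^t\rangle_{B'}$ and summing over $t=0,\ldots,k-1$, the cross iterates cancel and only the initial gap terms $\tfrac{1}{2\gamma}\|X^0-X\|_D^2$ and $\tfrac{\gamma}{2}\|\underline{Y}^0-Y\|_{B'}^2$, the final (non-positive) gap terms, and the residuals $-\tfrac{1}{2\gamma}\|X^{t+1}-X^t\|_D^2$ and $-\tfrac{\gamma}{2}\|\underline{Y}^{t+1}-\underline{Y}^t\|_{B'}^2$ survive. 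Since the algorithm is initialized with $Y^0=0$ (and hence $\underline{Y}^0=0$), the $Y$-side boundary contribution becomes $\tfrac{\gamma}{2}\|Y\|_{B'}^2$.

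Next I would absorb the Lipschitz term $\tfrac{L}{2}\|X^{t+1}-X^t\|^2$ into the quadratic residual. Using the stepsize hypothesis $\gamma \leq \lambda_{\min}(D)/L$, one has $\tfrac{L}{2}\|X^{t+1}-X^t\|^2 \leq \tfrac{1}{2\gamma}\|X^{t+1}-X^t\|_D^2$, so that the $\frac{L}{2}$-term is dominated by $-\tfrac{1}{2\gamma}\|X^{t+1}-X^t\|_D^2$ and can be dropped. The term $-\tfrac{1}{\gamma}\|\underline{X}^{t+1}\|_{B-BC-AB}^2$ is likewise non-positive under Assumption~\ref{assum:sublinear}v) (used in deriving Lemma~\ref{lem:basic_inequality_I_sublinear}) and can be discarded. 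Dividing through by $k$ and invoking convexity of $X\mapsto \phi(X,Y) = f(X) + \langle Y,X\rangle$ to get $\phi(\widehat{X}^k,Y) \leq \tfrac{1}{k}\sum_{t=1}^{k}\phi(X^t,Y)$, I arrive at
\[
\phi(\widehat{X}^k,Y) - \phi(X,Y) \leq \frac{1}{2k}\left(\frac{1}{\gamma}\|X^0-X\|_D^2 + \gamma\|Y\|_{B'}^2\right).
\]

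The last step---and the main technical point of the proof---is to control $\|Y\|_{B'}^2 = Y^\top (C+bJ)^{-1}B\,Y$ in terms of $\|Y\|^2$ with the stated constants. Because $B$ commutes with $C$ by Assumption~\ref{assum:sublinear}iv), and both $J$ and $C$ share $\mathbf{1}$ as an eigenvector (with $J\mathbf{1}=\mathbf{1}$, $C\mathbf{1}=0$, $B\mathbf{1}=\mathbf{1}$), the matrices $B$, $C$, $J$ are simultaneously diagonalizable in an orthonormal basis $\{v_i\}_{i=1}^m$ with $v_1=\mathbf{1}/\sqrt{m}$. For $Y\in\Span{C}\subseteq \Span{\mathbf{1}}^\perp$, $Y$ has no component along $v_1$, so $JY=0$ and hence $(C+bJ)^{-1}Y = C^\dagger Y$, where $C^\dagger$ is the Moore--Penrose pseudoinverse; similarly $(B-J)Y = BY$ on this subspace. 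Expanding $Y=\sum_{i\geq 2}y_i v_i$ in the common eigenbasis and using $Bv_i = b_i v_i$ (with $b_i\geq 0$ since $B\succeq 0$) and $Cv_i = c_i v_i$ (with $c_i\geq \lambda_2(C)$),
\[
\|Y\|_{B'}^2 = \sum_{i\geq 2}\frac{b_i}{c_i}\,y_i^2 \;\leq\; \frac{\max_{i\geq 2} b_i}{\min_{i\geq 2} c_i}\,\|Y\|^2 \;=\; \frac{\rho(B-J)}{\lambda_2(C)}\,\|Y\|^2,
\]
since $\rho(B-J) = \max_{i\geq 2} b_i$. Plugging this bound into the previous display yields \eqref{eq:basic_inequality_II_sublinear}.

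The routine bookkeeping is the telescoping and the stepsize-based cancellation of the Lipschitz term; the genuine obstacle is the last conversion of $\|Y\|_{B'}^2$ into $\|Y\|^2$ with the correct network-dependent constant. This relies crucially on (i) the commutativity of $B$ with $C$ (and implicitly with $J$), (ii) the identification of the restriction of $C^{-1}$ to $\Span{\mathbf{1}}^\perp$ with $C^\dagger$, so that the auxiliary parameter $b\geq 2$ in the definition of $B'$ drops out for dual variables lying in $\Span{C}$, and (iii) the identity $B-J=B$ on $\Span{\mathbf{1}}^\perp$, which produces $\rho(B-J)$ as the correct numerator.
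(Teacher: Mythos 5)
Your overall route is the same as the paper's (telescoping via the three-point identity, absorbing the Lipschitz term with $\gamma\le \lambda_{\min}(D)/L$, bounding $\norm{Y}^2_{B'}$ by $\tfrac{\rho(B-J)}{\lambda_2(C)}\norm{Y}^2$ using commutativity and $Y\perp\ones$), and your final eigenbasis argument for the $\norm{Y}_{B'}^2$ bound is a correct, slightly more explicit version of the paper's two-step estimate $\norm{Y}^2_{B'}\le \lambda_{\max}((C+bJ)^{-1})\norm{Y}^2_{B}=\tfrac{1}{\lambda_2(C)}\norm{Y}^2_{B-J}\le\tfrac{\rho(B-J)}{\lambda_2(C)}\norm{Y}^2$. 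However, there is one genuine gap: you discard the term $-\tfrac{1}{\gamma}\norm{\underline{X}^{t+1}}^2_{B-BC-AB}$ on the grounds that it is non-positive under Assumption~\ref{assum:sublinear}v). That assumption only gives $I-\tfrac12 C-\sqrt{B}D\sqrt{B}\succeq 0$, i.e. $B-\tfrac12 BC-AB=\sqrt{B}\bigl(I-\tfrac12 C-\sqrt{B}D\sqrt{B}\bigr)\sqrt{B}\succeq 0$; it does \emph{not} give $B-BC-AB\succeq 0$. Indeed, taking $B=I$ and $D=I-\tfrac12 C$ satisfies v) with equality, yet $B-BC-AB=-\tfrac12 C\preceq 0$, so the quadratic form you want to drop is then non-negative and cannot be discarded.

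The missing half of $BC$ must be harvested from the dual residual that you instead throw away separately: since $\underline{Y}^{t+1}-\underline{Y}^t=\tfrac{1}{\gamma}C\underline{X}^{t+1}$ and $B'C^2=(C+bJ)^{-1}C^2B=CB$, one has $\tfrac{\gamma}{2}\norm{\underline{Y}^{t+1}-\underline{Y}^t}^2_{B'}=\tfrac{1}{2\gamma}\norm{\underline{X}^{t+1}}^2_{BC}$, and combining this with $-\tfrac{1}{\gamma}\norm{\underline{X}^{t+1}}^2_{B-BC-AB}$ produces $-\tfrac{1}{\gamma}\norm{\underline{X}^{t+1}}^2_{B-\frac12 BC-AB}$, which is non-positive precisely by Assumption~\ref{assum:sublinear}v). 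This is exactly what the paper's proof does in step (a) of \eqref{eq:phi_k}. With that one repair your argument goes through unchanged.
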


We  now  prove the sublinear convergence rate. 
\begin{thm}[Sublinear rate for $T_CT_fT_B$]
	\label{thm:unified_alg_sublinear_no_reg}
	Consider Problem~\eqref{prob:dop_nonsmooth_same} under Assumption~\ref{assum:smooth_strong_reg} with $\mu=0$ and $G=0$; and let $x^\star$ be an optimal solution.  Let $\{(\x^k,Y^k)\}_{k\in \mathbb{N}_+}$ be the sequence generated by Algorithm~\eqref{alg:ABC_no_G} under Assumptions~\ref{assum:sublinear}. Then, if $0<\gamma \leq \frac{\lambda_{\min}(D)}{L}$, we have   
	\begin{equation}\label{eq:sublinear_rate_nonsepa} 
	\begin{aligned}
	&M(\widehat{\x}^{k})  \leq \frac{1}{{k}}\Big(\frac{1}{2\gamma}\norm{\x^0-\x^\star}^2_D+ 2 \gamma \,\frac{\rho(B-J)}{ \lambda_2(C)}\norm{\nabla f(\x^\star) }^2 \Big),
	\end{aligned}
	\end{equation}
	where $\widehat{X}^{k}=\frac{1}{{k}}\sum_{t=1}^{{k}}X^t$.
\end{thm}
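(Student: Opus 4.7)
The plan is to apply Lemma~\ref{lem:basic_inequality_II_sublinear} with the primal choice $X = X^\star \triangleq \ones(x^\star)^\top$, sweep the dual variable $Y$ over a ball in $\mathrm{Span}\{C\}$, and combine with a standard convexity estimate. Three structural observations will drive the argument: since $C \succeq 0$ and $\mathrm{Null}(C) = \mathrm{Span}\{\ones\}$, we have $\mathrm{Range}(C) = \mathrm{Span}\{\ones\}^\perp$ with orthogonal projector $I - J$, so in particular $\langle Y, X^\star\rangle = 0$ for every $Y \in \mathrm{Span}\{C\}$ and therefore $\phi(X^\star, Y) = f(X^\star)$; the centralized optimality condition $\sum_i \nabla f_i(x^\star) = 0$ gives $\nabla f(X^\star) \in \mathrm{Range}(C)$ and $\langle \nabla f(X^\star), X^\star\rangle = 0$; and $\sup\{\langle Y, \widehat{X}^k\rangle : Y \in \mathrm{Span}\{C\},\, \|Y\| \leq R\} = R\,\|(I-J)\widehat{X}^k\|$.

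I would first handle the consensus term $\|(I-J)\widehat{X}^k\|\cdot\|\nabla f(X^\star)\|$ of the merit function. Specializing Lemma~\ref{lem:basic_inequality_II_sublinear} to $X = X^\star$ yields, for every $Y \in \mathrm{Span}\{C\}$,
\[
f(\widehat{X}^k) - f(X^\star) + \langle Y, \widehat{X}^k\rangle \;\leq\; \tfrac{1}{2k\gamma}\|X^0 - X^\star\|_D^2 + \tfrac{\gamma}{2k}\tfrac{\rho(B-J)}{\lambda_2(C)}\|Y\|^2.
\]
Taking the supremum of both sides over $\{Y \in \mathrm{Span}\{C\} : \|Y\|\leq R\}$ turns $\langle Y, \widehat{X}^k\rangle$ into $R\|(I-J)\widehat{X}^k\|$ by the third observation. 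Combined with the convexity inequality $f(X^\star) - f(\widehat{X}^k) \leq -\langle \nabla f(X^\star), \widehat{X}^k - X^\star\rangle \leq \|\nabla f(X^\star)\|\cdot\|(I-J)\widehat{X}^k\|$ (where the $J$-component of $\widehat{X}^k$ is discarded using the second observation), this gives
\[
(R - \|\nabla f(X^\star)\|)\,\|(I-J)\widehat{X}^k\| \;\leq\; \tfrac{1}{2k\gamma}\|X^0 - X^\star\|_D^2 + \tfrac{\gamma R^2}{2k}\tfrac{\rho(B-J)}{\lambda_2(C)},
\]
and the choice $R = 2\,\|\nabla f(X^\star)\|$ produces exactly the right-hand side of \eqref{eq:sublinear_rate_nonsepa}.

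For the optimality-gap term $|f(\widehat{X}^k) - f(X^\star)|$, the upper direction is the $Y = 0$ instance of Lemma~\ref{lem:basic_inequality_II_sublinear} and is trivially dominated by the target bound, while the lower direction $f(X^\star) - f(\widehat{X}^k) \leq \|\nabla f(X^\star)\|\cdot\|(I-J)\widehat{X}^k\|$ has just been controlled by the previous step. Taking the maximum of the two estimates closes the argument. I expect the only delicate point to be the post-hoc tuning of the free radius $R$: plugging in a fixed $Y$ proportional to $\nabla f(X^\star)$ into Lemma~\ref{lem:basic_inequality_II_sublinear} cannot cancel the convexity term, and it is the supremum-over-a-ball formulation followed by the choice $R = 2\,\|\nabla f(X^\star)\|$ that makes the two summands on the right-hand side of \eqref{eq:sublinear_rate_nonsepa} emerge with exactly the stated coefficients.
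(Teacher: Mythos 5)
Your proposal is correct and follows essentially the same route as the paper: specialize Lemma~\ref{lem:basic_inequality_II_sublinear} to $X=X^\star$, exploit a dual variable of norm $2\|\nabla f(X^\star)\|$ aligned with $(I-J)\widehat{X}^k$, and close with the first-order convexity bound $f(X^\star)-f(\widehat{X}^k)\le\|\nabla f(X^\star)\|\,\|(I-J)\widehat{X}^k\|$. Your supremum-over-a-ball formulation with the post-hoc choice $R=2\|\nabla f(X^\star)\|$ is just a repackaging of the paper's explicit substitution $Y=-2\|Y^\star\|\,(I-J)\widehat{X}^k/\|(I-J)\widehat{X}^k\|$, which is precisely the maximizer over that ball.
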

\begin{proof}
Setting $X = X^\star$ in  \eqref{eq:basic_inequality_I_sublinear},  it holds
\[
\begin{aligned}
& \phi(\widehat{X}^{k},Y)-\phi(X^\star,Y) = f(\widehat{X}^{k}) - f(X^\star) - \innprod{\widehat{X}^{k} - X^\star}{Y}\\ 
& =   f(\widehat{X}^{k}) - f(X^\star) - \innprod{\widehat{X}^{k} }{Y} \leq h(\norm{Y}),
\end{aligned}
\]
for $Y \in \Span{C}$, where $h(\cdot)=\frac{1}{2k}\Big(\frac{1}{\gamma}\norm{X^0-X^\star}^2_D+\gamma\frac{\rho(B-J)}{\lambda_2(C)}(\cdot)^2\Big)$.
Setting $Y=-2\frac{(I-J)\widehat{X}^k}{\norm{(I-J)\widehat{X}^k}}\norm{Y^\star }$, with $Y^\star = -\nabla f(X^\star)$, we have
\[
f(\widehat{X}^k)-f(X^\star)+2\norm{Y^\star}\norm{(I-J)\widehat{X}^k}\leq h(2\norm{Y^\star}).
\]
By the convexity of $f$, $f(\widehat{X}^k)-f(X^\star)+\innprod{(I-J)\widehat{X}^k}{Y^\star}=f(\widehat{X}^k)-f(X^\star)+\innprod{\widehat{X}^k}{Y^\star}\geq 0$, we have $f(\widehat{X}^k)-f(X^\star)\geq -\norm{Y^\star}\norm{(I-J)\widehat{X}^k}$.  Combining the above two relations, we have 
$M(\widehat{X}^k)\leq h(2\norm{Y^\star}).$ This completes the proof.
\end{proof}

Finally, we provide the  choice of $\gamma$ that optimizes the rate given  in Theorem~\ref{thm:unified_alg_sublinear_no_reg}.

\begin{col}\label{col:unified_alg_sublinear_no_reg}
Consider  the setting of Theorem~\ref{thm:unified_alg_sublinear_no_reg}. The stepsize that minimizes the right hand side of~\eqref{eq:sublinear_rate_nonsepa} is  
\begin{equation}\label{eq:ss_sublinear}
\gamma =  \min\left(\frac{\lambda_{\min}(D)}{L},\, \frac{1}{2}\sqrt{\frac{\lambda_2(C)}{\rho(B-J)}} \frac{\norm{\x^0-\x^\star}_D}{\norm{\nabla f(\x^\star)}}\right),
\end{equation}
leading to a sublinear rate
\begin{equation}\label{eq:sublinear_rate}
\begin{aligned}
&M(\widehat{\x}^k) \leq \frac{1}{k}\max\bigg\{\frac{L  \norm{\x^0-\x^\star}_D^2}{\lambda_{\min}(D)},\\
&~~~~~~~~~~~~~2\sqrt{\frac{\rho(B-J)}{\lambda_2(C)}}\norm{\x^0-\x^\star}_D\norm{\nabla f(\x^\star)} \bigg\}.
\end{aligned}
\end{equation}
\end{col}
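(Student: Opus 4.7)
The plan is to treat the right-hand side of \eqref{eq:sublinear_rate_nonsepa} as an elementary one-variable optimization problem in $\gamma$. Abbreviating $a \triangleq \tfrac{1}{2}\|\x^0-\x^\star\|_D^2$ and $b \triangleq 2\,\tfrac{\rho(B-J)}{\lambda_2(C)}\|\nabla f(\x^\star)\|^2$, the upper bound on $M(\widehat{\x}^k)$ provided by Theorem~\ref{thm:unified_alg_sublinear_no_reg} can be written as $\frac{1}{k}\,\phi(\gamma)$, where $\phi(\gamma)\triangleq a/\gamma + b\,\gamma$ is to be minimized over the feasible interval $\gamma \in (0, \lambda_{\min}(D)/L]$.

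First, I would note that $\phi$ is strictly convex on $(0,\infty)$ with unique unconstrained minimizer $\gamma_\star = \sqrt{a/b}$ and minimum value $\phi(\gamma_\star) = 2\sqrt{ab}$. Substituting back the expressions for $a$ and $b$ yields
\[
\gamma_\star = \tfrac{1}{2}\sqrt{\tfrac{\lambda_2(C)}{\rho(B-J)}}\,\tfrac{\|\x^0-\x^\star\|_D}{\|\nabla f(\x^\star)\|},\qquad 2\sqrt{ab}=2\sqrt{\tfrac{\rho(B-J)}{\lambda_2(C)}}\,\|\x^0-\x^\star\|_D\,\|\nabla f(\x^\star)\|,
\]
which already matches the second branch of \eqref{eq:ss_sublinear} and the second quantity inside the $\max$ in \eqref{eq:sublinear_rate}.

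Next, I would split into two cases according to whether the unconstrained minimizer lies in the feasible interval. If $\gamma_\star \leq \lambda_{\min}(D)/L$, the optimal choice on the feasible set coincides with $\gamma_\star$ and gives the bound $\frac{1}{k}\cdot 2\sqrt{ab}$. If instead $\gamma_\star > \lambda_{\min}(D)/L$, then $\phi$ is strictly decreasing on $(0,\lambda_{\min}(D)/L]$, so the constrained optimum is $\gamma = \lambda_{\min}(D)/L$; moreover, at this $\gamma$ we have $\gamma \leq \gamma_\star$, which is equivalent to $b\gamma \leq a/\gamma$, so $\phi(\gamma) \leq 2a/\gamma = L\|\x^0-\x^\star\|_D^2/\lambda_{\min}(D)$, recovering the first quantity inside the $\max$.

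Combining the two cases, the optimal stepsize is exactly the one stated in \eqref{eq:ss_sublinear} and the resulting bound is dominated by the maximum of the two expressions in \eqref{eq:sublinear_rate}, completing the proof. There is no genuine obstacle here: the only subtlety is the case analysis forced by the feasibility constraint $\gamma \leq \lambda_{\min}(D)/L$ inherited from Theorem~\ref{thm:unified_alg_sublinear_no_reg}, and the slightly loose but convenient bound $\phi(\gamma) \leq 2a/\gamma$ in the second case, which is what lets us express the final rate as a single clean $\max$ rather than a sum of two terms.
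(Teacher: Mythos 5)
Your proof is correct and is exactly the elementary constrained one-dimensional minimization the paper leaves implicit (no proof of this corollary is given in the paper); the case split at $\gamma_\star$ versus $\lambda_{\min}(D)/L$ and the bound $\phi(\gamma)\leq 2a/\gamma$ in the constrained case are precisely what turn the sum in \eqref{eq:sublinear_rate_nonsepa} into the $\max$ in \eqref{eq:sublinear_rate}.
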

Note that the stepsize in~\eqref{eq:ss_sublinear} depends on  $\norm{\x^0-\x^\star}_D/\norm{\nabla f(\x^\star)}$, an information that is not generally available;  we discuss this issue in  Sec. ~\ref{sec:discussion_sublinear}. \vspace{-0.2cm}

\subsection{Convergence under $G\neq 0$}
We  consider now Problem~\eqref{prob:dop_nonsmooth_same} with $G\neq 0$   and  $\mu=0$. We study convergence of   a variation of the general scheme \eqref{alg:g-ABC}, where the proximal operator is employed before $T_f$,  yielding the operator decoposiiton $T_CT_gT_fT_B$.\footnote{It is not difficult to check that any fixed point of $T_CT_gT_fT_B$ has the same fixed-points of the operator in \eqref{eq:T_operator}.} 
This scheme reads 
\begin{equation}
\label{alg:ABC_sublinear_prox}
\begin{aligned}
\underline{X}^{k+1}&= DX^k-\gamma ( \nabla f(X^k)+\underline{Y}^k), \\
 X^{k+1} & = \prox{\gamma g}{B\underline{X}^{k+1}},\\
\underline{Y}^{k+1}&=\underline{Y}^k+\frac{1}{\gamma} CX^{k+1}.
\end{aligned}
\end{equation}
Note that a key difference between \eqref{alg:g-ABC} and the above algorithm is that the former uses in the update of the dual variable $Y$ the variable  $Z$, the variable  before the operator $\prox{\gamma g}{\cdot}$,  while the latter uses the variable $X$, i.e., the variable after the operator $\prox{\gamma g}{\cdot}$.
It is not difficult to check that \eqref{alg:ABC_sublinear_prox}  subsumes many existing proximal-gradient methods, \jm{such as PG-EXTRA~\cite{shi2015proximal} or ID-FBBS~\cite{xu2018bregman} (with $A=\W,B=\I,C=\I-\W$)}.  We present a unified result of the sublinear convergence for the algorithm~\eqref{alg:ABC_sublinear_prox}, under the following assumption.


\begin{assum}\label{assum:sublinear_prox} 
The weight matrices $B,\,C,\,D\in \mathbb{S}^{m}$ satisfy: 
\begin{itemize} 
  \item[i)] $B=I$; 
  \item[ii)] $\ones^\top D \ones = m$; 
  \item[iii)]  $C \succeq 0$ and  $\Null{{C}}=\Span{\ones}$;
  \item[iv)] $0\prec D \preceq \I-\frac{C}{2}$.
\end{itemize}

\end{assum}

Note that the above assumption is,  indeed, a customization of Assumption~\ref{assum:sublinear}. The condition $B=I$ is introduced to deal with   the complication of the proximal operator $T_g$.  

 {We study convergence of Algorithm \eqref{alg:ABC_sublinear_prox} using the following merit function measuring the progresses of the algorithms from consensus and optimality. Define
\[M(\x)\triangleq
\max \left\{\norm{(I-J)\x}\norm{Y^\star},\,\abs{(f+g)(\x)-(f+g)(X^\star)}\right\}.
\]
where $Y^\star = -\left( \nabla f(X^\star) + \ones (\xi^\star)^\top \right)$, for some $\xi^\star \in \partial G(x^\star)$ such that $\xi^\star + \nabla F(x^\star)=\xi^\star + \frac{1}{m} \sum_{i=1}^m \nabla f(x^\star) =0$. Note that, since $\ones^\top Y^\star = 0$, we have $Y^\star \in \Span{C}.$ }

We are now ready to state   our convergence  result, whose proof is left to the Appendix due to its similarity to that of Theorem~\ref{thm:unified_alg_sublinear_no_reg}.

\begin{thm}[Sublinear rate for $T=T_CT_gT_fT_B$]
\label{thm:sublinear_prox} 
Consider Problem~\eqref{prob:dop_nonsmooth_same} under Assumption~\ref{assum:smooth_strong_reg} with $\mu=0$; and let $x^\star$ be an optimal solution.  Let $\{(\x^k,Y^k)\}_{k\geq 0}$ be the sequence generated by Algorithm~\eqref{alg:ABC_sublinear_prox} under Assumptions~\ref{assum:sublinear_prox}.
Then, if $\gamma < \frac{\lambda_{\min}(D)}{L}$,  we have 
\begin{equation}\label{eq:prox_sublinear}
\begin{aligned}
M(\widehat{\x}^k)  \leq \frac{1}{k}\Big(\frac{1}{2\gamma}\norm{\x^0-\x^\star}^2_D+ 2 \gamma \,\frac{1}{ \lambda_2(C)}\norm{\nabla f(\x^\star) }^2 \Big),
\end{aligned}
\end{equation}
where $\widehat{X}^{k}=\frac{1}{{k}}\sum_{t=1}^{{k}}X^t$. 
\end{thm}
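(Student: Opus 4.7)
The plan is to mirror the three-step strategy used for Theorem~\ref{thm:unified_alg_sublinear_no_reg} (a basic per-iterate inequality, a telescoped saddle-point bound, and a merit-function conversion), adapting each step to accommodate the proximal operator $T_g$ induced by $G\neq 0$. Define the Lagrangian-like function $\phi(X,Y)\triangleq f(X)+g(X)+\innprod{Y}{X}$. Since $B=I$ by Assumption~\ref{assum:sublinear_prox}(i), the update $X^{k+1}=\prox{\gamma g}{\underline{X}^{k+1}}$ is equivalent to the inclusion $\underline{X}^{k+1}-X^{k+1}\in\gamma\,\partial g(X^{k+1})$, which gives a clean handle on $g$. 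As in Lemma~\ref{lem:ABC_equiv_form}, I will first verify that $\Span{Y^k}\subset\Span{C}$ (which still holds because the $Y$-update is $CX^{k+1}/\gamma$), so that the substitution $Y^k=\gamma\underline{Y}^k$ with $\underline{Y}^k\in\Span{C}$ is well-defined.

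The first main step is a composite analogue of Lemma~\ref{lem:basic_inequality_I_sublinear}: for all $X\in\Span{\ones}$ and $Y\in\Span{C}$,
\begin{equation*}
\phi(X^{k+1},Y)-\phi(X,Y)\leq -\tfrac{1}{\gamma}\innprod{X^{k+1}-X^k}{X^{k+1}-X}_{D}-\gamma\innprod{\underline{Y}^{k+1}-Y}{\underline{Y}^{k+1}-\underline{Y}^k}_{B'}+\tfrac{L}{2}\norm{X^{k+1}-X^k}^2-\tfrac{1}{\gamma}\norm{X^{k+1}}_{D-C/2}^2+\text{cross terms},
\end{equation*}
with $B'=(C+bJ)^{-1}$. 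To obtain this, I will combine three ingredients: (a) the descent lemma for the $L$-smooth part $f$ applied between $X^k$ and $X^{k+1}$; (b) the subdifferential inequality for $g$ at $X^{k+1}$ using $\underline{X}^{k+1}-X^{k+1}\in\gamma\partial g(X^{k+1})$, which yields $g(X^{k+1})-g(X)\leq\tfrac{1}{\gamma}\innprod{\underline{X}^{k+1}-X^{k+1}}{X^{k+1}-X}$; and (c) the dual update $C X^{k+1}=\gamma(\underline{Y}^{k+1}-\underline{Y}^k)$ together with the primal identity $\underline{X}^{k+1}=DX^k-\gamma(\nabla f(X^k)+\underline{Y}^k)$ to turn the inner products above into the quadratic forms appearing on the right-hand side. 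The condition $D\preceq I-C/2$ from Assumption~\ref{assum:sublinear_prox}(iv) is precisely what kills the non-positive quadratic term arising from the $g$-piece (analogous to $I-C/2-A\succeq 0$ in Assumption~\ref{assum:sublinear}(v)), and the stepsize constraint $\gamma\leq\lambda_{\min}(D)/L$ neutralizes the $(L/2)\norm{X^{k+1}-X^k}^2$ term against $\tfrac{1}{\gamma}\norm{X^{k+1}-X^k}_D^2$.

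The second step telescopes this inequality from $t=0$ to $k-1$, uses Jensen's inequality on the convex function $\phi(\cdot,Y)$ at the running average $\widehat{X}^k=\tfrac{1}{k}\sum_{t=1}^k X^t$, and bounds the dual quadratic $\norm{Y}_{B'}^2\leq \norm{Y}^2/\lambda_2(C)$ (valid since $Y\in\Span{C}$ and $B=I$, so $B'=(C+bJ)^{-1}$ acts as $1/\lambda_2(C)$ on $\Span{C}$). This produces a saddle-point bound
\begin{equation*}
\phi(\widehat{X}^k,Y)-\phi(X,Y)\leq \tfrac{1}{2k}\Big(\tfrac{1}{\gamma}\norm{X^0-X}_D^2+\gamma\tfrac{1}{\lambda_2(C)}\norm{Y}^2\Big),\qquad \forall\,X\in\Span{\ones},\ Y\in\Span{C}.
\end{equation*}

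The final step converts this saddle-point bound into a bound on the merit function $M(\widehat{X}^k)$. Set $X=X^\star=\ones(x^\star)^\top$, so that by the KKT condition $Y^\star=-(\nabla f(X^\star)+\ones(\xi^\star)^\top)\in\Span{C}$ and $\innprod{\widehat{X}^k-X^\star}{Y^\star}=\innprod{\widehat{X}^k}{Y^\star}$. Plug in $Y=-2\tfrac{(I-J)\widehat{X}^k}{\norm{(I-J)\widehat{X}^k}}\norm{Y^\star}$ (which lies in $\Span{C}$ because $(I-J)\widehat{X}^k\perp\Span{\ones}$ and $\Null C=\Span{\ones}$), giving
\begin{equation*}
(f+g)(\widehat{X}^k)-(f+g)(X^\star)+2\norm{Y^\star}\norm{(I-J)\widehat{X}^k}\leq \tfrac{1}{2k}\Big(\tfrac{1}{\gamma}\norm{X^0-X^\star}_D^2+\tfrac{4\gamma\norm{Y^\star}^2}{\lambda_2(C)}\Big).
\end{equation*}
Using convexity of $f+g$ together with the KKT relation $\xi^\star+\nabla F(x^\star)=0$ to obtain the lower bound $(f+g)(\widehat{X}^k)-(f+g)(X^\star)\geq -\norm{Y^\star}\norm{(I-J)\widehat{X}^k}$ (the argument parallels the final step of the proof of Theorem~\ref{thm:unified_alg_sublinear_no_reg}), both the consensus error and the absolute optimality gap are controlled by the right-hand side, yielding~\eqref{eq:prox_sublinear}. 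The main obstacle will be step one: producing the correct per-iteration inequality in the presence of $\partial g$, and in particular identifying the precise algebraic combination of the $g$-subgradient inequality, the descent lemma for $f$, and the coupling through $\underline{Y}^{k+1}-\underline{Y}^k=\tfrac{1}{\gamma}CX^{k+1}$ so that the residual quadratic is dominated by Assumption~\ref{assum:sublinear_prox}(iv) rather than by a more restrictive $B^2\preceq I-C$–type condition.
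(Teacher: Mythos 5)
Your proposal follows essentially the same route as the paper's own proof: the same per-iteration inequality built from the descent lemma for $f$, the prox-subgradient inequality $g(X^{k+1})-g(X)\leq\frac{1}{\gamma}\innprod{\underline{X}^{k+1}-X^{k+1}}{X^{k+1}-X}$, and the dual update $\underline{Y}^{k+1}-\underline{Y}^k=\frac{1}{\gamma}CX^{k+1}$, followed by the same telescoping/Jensen step and the same choice $Y=-2\frac{(I-J)\widehat{X}^k}{\norm{(I-J)\widehat{X}^k}}\norm{Y^\star}$ in the merit-function conversion. The only slip is the subscript in your first display, which should be $I-\frac{C}{2}-D$ (the matrix that Assumption~\ref{assum:sublinear_prox}(iv) makes positive semidefinite) rather than $D-C/2$; since you state the correct condition in words immediately afterward, this does not affect the argument, which matches the paper's appendix lemmas step for step.
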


\begin{col}\label{col:unified_alg_sublinear_reg}
Consider the setting of Theorem~\ref{thm:sublinear_prox}. The stepsize that minimizes the right hand side of~\eqref{eq:prox_sublinear} is
\begin{equation}\label{eq:ss_sublinear_prox}
\gamma =  \min\left(\frac{\lambda_{\min}(D)}{L},\, \frac{1}{2} \sqrt{\lambda_2(C)} \frac{\norm{\x^0-\x^\star}_D}{\norm{\nabla f(\x^\star)}}\right),
\end{equation}
leading to a sublinear rate
\begin{equation}\label{eq:sublinear_G}
\begin{aligned}
&M(\widehat{\x}^k) \leq \frac{1}{k}\max\bigg\{\frac{L  \norm{\x^0-\x^\star}_D^2}{\lambda_{\min}(D)},\\
&~~~~~~~~~~~~~2\sqrt{\frac{1}{\lambda_2(C)}}\norm{\x^0-\x^\star}_D\norm{\nabla f(\x^\star)} \bigg\}.
\end{aligned}
\end{equation}\vspace{-0.4cm}
\end{col}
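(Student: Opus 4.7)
The plan is to treat the right-hand side of \eqref{eq:prox_sublinear} as a scalar function of the stepsize $\gamma$ and minimize it over the admissible interval $(0,\lambda_{\min}(D)/L]$. Writing the bound as $\tfrac{1}{k}\bigl(a/\gamma+b\gamma\bigr)$ with
\[
a\triangleq \tfrac{1}{2}\norm{\x^0-\x^\star}_D^2,\qquad b\triangleq \tfrac{2}{\lambda_2(C)}\norm{\nabla f(\x^\star)}^2,
\]
the function $\gamma\mapsto a/\gamma+b\gamma$ is convex on $(0,\infty)$, strictly decreasing on $(0,\gamma_o)$ and strictly increasing on $(\gamma_o,\infty)$, where the unconstrained minimizer is $\gamma_o=\sqrt{a/b}$ with minimum value $2\sqrt{ab}$. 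A direct computation gives
\[
\gamma_o=\tfrac{1}{2}\sqrt{\lambda_2(C)}\,\frac{\norm{\x^0-\x^\star}_D}{\norm{\nabla f(\x^\star)}},\qquad 2\sqrt{ab}=\tfrac{2}{\sqrt{\lambda_2(C)}}\norm{\x^0-\x^\star}_D\norm{\nabla f(\x^\star)}.
\]

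Next I would split into two cases according to whether the unconstrained minimizer is feasible. If $\gamma_o\le \lambda_{\min}(D)/L$, the optimal feasible choice is $\gamma=\gamma_o$ and the resulting bound is exactly the second argument of the $\max$ in \eqref{eq:sublinear_G}. If instead $\gamma_o>\lambda_{\min}(D)/L$, monotonicity of $a/\gamma+b\gamma$ on $(0,\gamma_o)$ forces the constrained minimum to lie at the boundary $\gamma=\lambda_{\min}(D)/L$. Plugging this value in gives $a/\gamma=\tfrac{L}{2\lambda_{\min}(D)}\norm{\x^0-\x^\star}_D^2$, and since $\gamma\le\gamma_o$ also implies $b\gamma\le a/\gamma$, the sum is upper bounded by $2(a/\gamma)=\tfrac{L}{\lambda_{\min}(D)}\norm{\x^0-\x^\star}_D^2$, which is the first argument of the $\max$ in \eqref{eq:sublinear_G}.

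Combining the two cases into the compact formula \eqref{eq:ss_sublinear_prox} for $\gamma$ and the $\max$ upper bound \eqref{eq:sublinear_G} concludes the argument. There is no genuine obstacle here beyond bookkeeping: the only mildly delicate point is the boundary case, where one must note that taking $\gamma=\lambda_{\min}(D)/L$ is in general suboptimal for the two-term expression but still yields a rate that is dominated by the stated $\max$, so that a single uniform bound suffices regardless of which regime $\gamma_o$ falls in.
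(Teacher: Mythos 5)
Your proposal is correct and is exactly the argument the paper intends: the paper omits an explicit proof of this corollary but states in Sec.~\ref{sec:discussion_sublinear} that the optimal stepsize is the one "balancing" the two terms of \eqref{eq:prox_sublinear}, which is precisely your unconstrained minimizer $\gamma_o=\sqrt{a/b}$ truncated at the feasibility bound $\lambda_{\min}(D)/L$. Your handling of the boundary case (using $b\gamma\le a/\gamma$ to absorb the sum into $2a/\gamma$, and noting that whichever regime holds, the stated $\max$ dominates) is the right bookkeeping and matches how the analogous Corollary~\ref{col:unified_alg_sublinear_no_reg} is obtained.
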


\subsection{Discussion}
\label{sec:discussion_sublinear}
\subsubsection{On rate seperation}

Differently from most of the existing works, such as~\cite{shi2015extra,qu2017harnessing,aybat2017distributed}, the above convergence results (Corollary~\ref{col:unified_alg_sublinear_no_reg} and \ref{col:unified_alg_sublinear_reg}) establish the explicit dependency of the rate on the network parameter as well as the properties of the cost functions.  Specifically, the rate coefficients  in \eqref{eq:sublinear_rate} and \eqref{eq:sublinear_G} show an explicit dependence on the network and optimization parameters, with the first term on the RHS  corresponding to the rate of the centralized optimization algorithm while the second term related to both the communication network and the heterogeneity of the cost functions of the agents (i.e., $\norm{\nabla f(x^\star)}$).  The smaller $\norm{\nabla f(x^\star)}$,  the more similar the objective functions agents have. For instance, when $f_i$'s share a common minimizer, i.e., $\norm{\nabla f(x^\star)}=0$, the  rate will reduce to the centralized one.   The term $\sqrt{\frac{\rho(B-J)}{\lambda_2(C)}}$  accounts for the network effect on the rate. For instance, set $C=I-B$, so that 
  $\lambda_2(C) = 1-\rho(B-J).$ If $\rho(B-J)\to0$ (meaning a network tending to a fully connected graph), $\sqrt{\frac{\rho(B-J)}{\lambda_2(C)}}\to 0$, leading to the rate of the centralized gradient algorithm [cf. (\ref{eq:sublinear_rate})].  On the other hand, if $\rho(B-J)\to 1$ (poorly connected network),
 $\sqrt{\frac{\rho(B-J)}{\lambda_2(C)}}\to +\infty$, deteriorating the overall rate. 
  As a result,   when the agents have similar cost functions (i.e., small value of $\norm{\nabla f(x^\star)}$) or the network is well connected, 
  the first term will dominate the second, leading to the centralized performance.  The tightness of the rate expression (Corollary~\ref{col:unified_alg_sublinear_no_reg}) is validated by our numerical results--see Sec.~\ref{sec:sim_smooth}.

\subsubsection{On the choice of stepsize}

The optimal stepsize, as indicated in~\eqref{eq:ss_sublinear} (resp. \eqref{eq:ss_sublinear_prox}), is such that the two terms in \eqref{eq:sublinear_rate_nonsepa} (resp. \eqref{eq:prox_sublinear}) are balanced.
Albeit \eqref{eq:ss_sublinear} and \eqref{eq:ss_sublinear_prox} generally are not implementable, due to the unknown quantity $\norm{\x^0-\x^\star}_D/\norm{\nabla f(\x^\star)}$, the result is interesting on the theoretical side, showing that  the ``optimal'' stepsize is not necessarily $1/L$ but depends on the the network and the degree of heterogeneity of the cost functions as well. In particular, the optimal choice is $1/L$ when the network is well connected and agents share similar ``interests'', i.e., $\norm{\nabla f(x^\star)}$ is small. On the other hand,   as the connectivity of the network becomes worse and/or the heterogeneity of local cost functions becomes larger,  stepsize values  smaller  than $1/L$  ensure better performance. 
This observation provides recommendations on stepsize tuning and it is validated by our numerical experiments.

\section{Numerical Results}\label{sec:numerical}

We present some numerical results on strongly convex and convex instances of \eqref{prob:dop_nonsmooth_same}, supporting  our theoretical findings. 
The obtained rates bounds  are shown to predict well the practical behavior of the algorithms. For instance, the ATC-based schemes exhibit a clear rate separation [as predicted by (\ref{rate-separation})]: the convergence rate  cannot be continuously improved  by unilaterally decreasing the difficulty of the problem or increasing the connectivities of the communication matrices. \vspace{-0.2cm}

\subsection{Strongly convex problems}\label{sec:sc_simul}
We consider a regularized least squares problem over an undirected graph consisting of $50$ nodes, generated through the Erdos-Renyi model with activating probability of $0.05$ for each edge. The problem reads 
\begin{align}\label{p:elastic_net}
\min_{x\in \mathbb{R}^d} \left(\frac{1}{50} \sum_{i=1}^{50}\norm{U_i x - v_i}^2\right) + \rho \norm{x}_2^2 + \lambda \norm{x}_1.
\end{align}
where $U_i\in \mathbb{R}^{r\times d}$ and $\v_i\in \mathbb{R}^{r\times 1}$ are the feature vector and labels, respectively, only accessible by node $i$. 
For brevity, we denote $U = [U_1; U_2;\cdots; U_{50}] \in \mathbb{R}^{50r \times d}$ and $v = [v_1; v_2;\cdots; v_{50}] \in \mathbb{R}^{50r \times 1}$ and use ${M}_{:,i}$ (resp. ${M}_{i,:}$)  to denote the $i$-th column (resp. row) of a matrix ${M}$.  In the simulation, we set $r = 20$, $d = 40$, $\rho=20$ and $\lambda = 1.$   We generate the matrix $U$ of the feature vectors according to the following  procedure, proposed in \cite{agarwal2010fast}:  we first generate an innovation matrix $Z$ with each entry  i.i.d. drawn from $\mathcal{N}(0,1).$  Using a control parameter $\omega \in [0,1)$, we then generate columns of $U$  such that the first column is  $U_{:,1} = {Z}_{:,1}/\sqrt{1 - \omega^2}$ and the rest are recursively set as $U_{:,i} = \omega U_{:,i-1} + Z_{:,i}$, for $i = 2,\ldots, d$. As a result, each row $U_{i,:}\in \mathbb{R}^d$ is a Gaussian random vector and its covariance matrix $\Sigma = \text{cov}(U_{:,i})$ is the identity matrix if $\omega = 0$ and becomes extremely ill-conditioned as $\omega \to 1.$  Finally, we generate $x_0 \in \mathbb{R}^d$ with sparsity level $0.3$ and each nonzero entry   i.i.d. drawn  from $\mathcal{N}(0,1)$, and set $v= Ux_0 + \xi,$ where each component of the noise $\xi$ is  i.i.d. drawn from $\mathcal{N}(0,0.04).$  By changing $\omega$ one can control the conditional number  $\kappa$ of the smooth objective in (\ref{p:elastic_net}). \\\  
$\bullet$ \textbf{Validating \eqref{rate-separation}:} We simulated the following instances of Algorithm~\ref{alg:g-ABC}. We set  $A = B = (\frac{I+W}{2})^K$ and $C=I-B$, where $W$ is a weight matrix generated using the Metropolis-Hastings rule~\cite{xiao2004fast}, and  $K\geq 1$ is the number of inner consensus steps.  When Chebyshev acceleration is employed in the inner consensus steps, we instead used  $A=B=({I+P_K(\widetilde{W})})/{2}$ and $C=I-B$ (condition of Corollary~\ref{col:contraction_overall_optimal_choice} is satisfied).  In Figure~\ref{fig_sc_pd}, we plot the number of iterations (gradient evaluations) needed by the algorithm to reach an accuracy of $10^{-8}$, versus the number of inner consensus $K$, for different values of $\kappa$; solid (resp. dashed) line-curves refer to non-accelerated (Chebyshev) consensus steps. The markers (diamond symbol) correspond to the number of iterations predicted by  \eqref{rate-separation} for the max in  \eqref{rate-separation} to achieve the minimum value, that is,   $\ceil{{2\log (\frac{\kappa-1}{\kappa+1})}/{\log (\frac{1+\lambda_{m-1}(W)}{2})}}$. The following comments are in order. \textbf{(i)} As $K$ increases, the number of iterations needed to reach the desired solution accuracy decreases till it reaches a plateau; further communication rounds do not improve the performance, as the optimization component becomes the bottleneck [as predicted by \eqref{rate-separation}]. \textbf{(ii)} Less number of iterations are needed when  $\kappa$ becomes smaller (simpler problem). Finally,  \textbf{(iii)} Chebyshev  acceleration further reduces the number of iterations. This was all predicted by our theoretical findings.



\begin{figure}[t]
\centering
\includegraphics[width=0.42\textwidth]{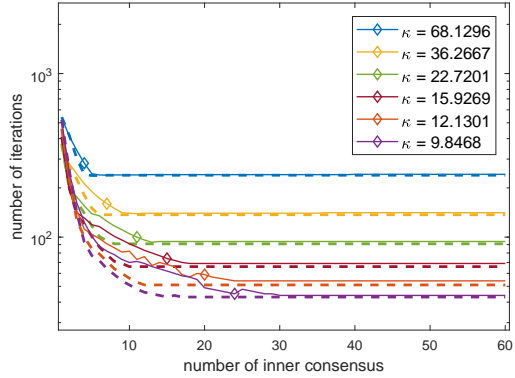}\vspace{-0.2cm}
\caption{ LASSO problem: Number of iterations (gradient evaluations) needed to reach an accuracy of $10^{-8}$ by Algorithm~\ref{alg:g-ABC} employing Chebyshev acceleration (dashed lines) and multiple rounds of consensus (solid lines).}
\label{fig_sc_pd}\vspace{-0.3cm}
\end{figure}

\noindent$\bullet$ \textbf{Validating Table~\ref{tab:optim_linear_rate}: Comparison of the ``prox''-versions of existing algorithms. }
In Fig.~\ref{fig:ranking_good_graph} we compare the ``prox'' version of several existing algorithms, applied to  \eqref{p:elastic_net}: we plot the optimality gap $\norm{\x^k-\ones x^{\star\top}}$ versus the overall number of iterations (gradient evaluations). The setting   is the same as in the previous example, except that now we set $\omega = 0.8.$  The stepsize of each algorithm is chosen according to \eqref{eq:opt_ss_D}. The network is simulated according to the Erdos-Renyi model with a connection probability of $0.25$; in this setting, the max in  \eqref{rate-separation} is achieved at $(\kappa-1)/(\kappa+1)$. It follows from the figure that ATC-based schemes, such as Prox-NEXT/AugDGM, Prox-NIDS, outperform non-ATC ones, such as Prox-EXTRA and Prox-DIGing, validating the ranking established in (the last column of) Table~\ref{tab:optim_linear_rate}.\vspace{-0.3cm}

\begin{figure}[t]
\centering
\includegraphics[width=0.38\textwidth]{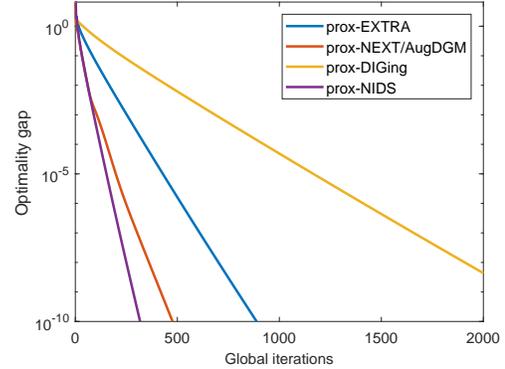}\vspace{-0.3cm}
\caption{Performance comparison of the proximal extensions of some existing algorithms--the schemes are all new and instantiations of (\ref{alg:g-ABC}) with the weight-matrices as in Table II for their counterparts applied to \eqref{prob:dop_nonsmooth_same} with $G=0$. 
}
\label{fig:ranking_good_graph}\vspace{-0.5cm}
\end{figure}

\subsection{Non-strongly-convex problems}
\label{sec:sim_smooth}

To illustrate the results for non-strongly convex problems, we report here a logistic regression problem using the Ionosphere Data Set as follows \cite{Dua:2019}:
\[
\min_{x\in \mathbb{R}^{34}} \frac{1}{50} \sum_{i=1}^{50} \sum_{k=7(i-1)+1}^{7i} \log(1+\exp(-v_k u_k^\top x)),
\]
where $u_k\in \mathbb{R}^{34}$ and $v_k\in \{-1,1\}$ are respectively the feature vector and label of the $k$-th sample.  We use $U = [u_1, u_2, \cdots, u_{350}]^\top$ to denote the feature matrix.  We construct several problems with different Lipschitz constant by multiplying the feature matrix $U$ with different scaling factors. In particular, given the original problem with an $L$-smooth objective function $f$, one can multiply $U$ by a scalar $0<\alpha <1$ to construct a new $\alpha^2 L$-smooth objective function $f_\alpha(\cdot)$.
In the simulation, we consider the polynominal method and thus set $A = B = \widetilde{W}^K$ and $C=I-B$.  The stepsize of the algorithm is chosen\footnote{This choice is not implementable in practice but only for illustration.} according to \eqref{eq:ss_sublinear}. Figure~\ref{fig_sublinear} plots the number of iterations (gradient evaluations) needed by the algorithm to reach an accuracy of $10^{-4}$ in solving different problems with different difficulty versus the number of inner loop of consensus. It follows from the figure that, similar as with the strongly convex case, the number of iterations needed is decreasing with the number of inner loops of consensus, until it reaches to a turning point which appears later as the Lipschitz constant $L$ decreases. This observation verifies the result as shown in
 \eqref{eq:sublinear_rate} where the two quantities is to be properly balanced with multiple communication steps.\vspace{-0.3cm}

\begin{figure}[t]\vspace{-0.4cm}
\centering
\includegraphics[width=0.4\textwidth]{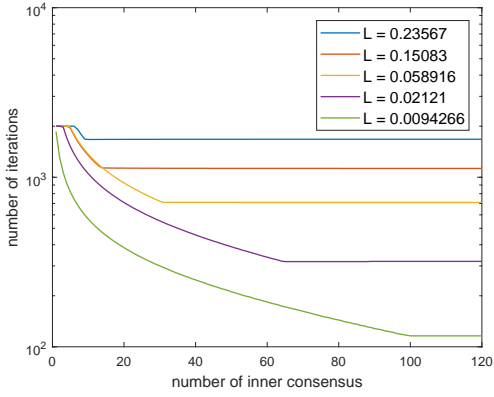}
\caption{Logistic regression problem:  Number of iterations (gradient evaluations) needed to reach an accuracy of $10^{-4}$ by Algorithm~\ref{alg:ABC_no_G} (equivalently Algorithm~\ref{eq:ABC_alg_equi}) employing multiple rounds of consensus.}
\label{fig_sublinear}
\end{figure}

\section{Conclusion} 
We proposed a unified distributed algorithmic framework for composite optimization problems over   networks; the framework subsumes many existing schemes. When the agents' functions are strongly convex, linear convergence is proved      leveraging an operator contraction-based analysis. With a proper choice of the design parameters, the rate dependency on the network and cost functions can be decoupled, which permits  to achieve the rate of the  centralized (proximal)-gradient methods  using a finite number of communications per gradient evaluations. Our convergence conditions and rate bounds   improve on existing ones. Furthermore, thanks to our unified framework and analysis, a fair comparison and ranking of the different   (including existing) schemes were provided. 
 When the functions of the agents are (not strongly) convex, a sublinear convergence rate was established, shading  light on the dependency of the convergence  on the connectivity of the network and the heterogeneity of the cost functions.

\appendix
\subsection{Proof of Lemma~\ref{lem:iff_fixed_point}}\label{sec:pf_lm2}

$(\Leftarrow):$ Suppose Assumption~\ref{assum:cond_A_B} hold. First, for any $\x\in\mathcal{S}_\texttt{Fix}$, we have $\Span{\x} \subset \Null{\C}=\Span{\ones}$ and so $\ones^\top(I-\A)\x=0$.  Then we have $\ones^\top\nabla f(\x)=\ones^\top\B\nabla f(\x)\in - \ones^\top\partial g(\x),$ i.e., $\exists \,  \xi \in \partial g(\x)$ such that $\Span{\nabla f(\x)  + \xi }\perp \Span{\ones} = \Null{\sqrt{\C}},$  which implies that $\Span{\nabla f(\x)  + \xi} \subset \Span{\sqrt{C}}.$  Therefore, $\exists\, \y \in  \mathbb{R}^{m\times d}$ such that $\nabla f(\x)  + \xi = -\sqrt{C} \y,$ i.e., $\nabla f(X)+  \sqrt{{C}}\y \in-\partial g(\x).$  Hence, $\x\in\mathcal{S}_\texttt{KKT}.$  Secondly, for any $\x\in\mathcal{S}_\texttt{KKT}$, we have $\Span{\x} \subset \Span{\ones}$ and so $\ones^\top(I-\A)\x+\gamma \ones^\top\B\nabla f(\x) = \gamma \ones^\top \left( \nabla f(X)+  \sqrt{{C}}\y \right) \in -\gamma \ones^\top\partial g(\x)$, i.e., $\x\in\mathcal{S}_\texttt{Fix}$.\\
$(\Rightarrow:)$ $\mathcal{S}_\texttt{KKT}=\mathcal{S}_\texttt{Fix}$ implies that, for any arbitrarily given $f$, $g$ and $\x$, if $\Span{\x} \subset\Span{\ones}$ and $\ones^\top\nabla f(\x)\in -\ones^\top\partial g(\x)$, it must be $\ones^\top(I-\A)\x+\gamma \ones^\top\B\nabla f(\x)\in-\gamma \ones^\top\partial g(\x)$, which, due to the arbitrary nature  of $f$, $g$, and $\x$, further implies   $\ones^\top(I-\A)\ones=0$ and $\ones^\top\B = \ones^\top.$

\section{Supporting Proofs of Linear Convergence Rate}

\subsection{Proof of Lemma~\ref{lem:contraction_T_f}}\label{sec:pf_tf}

Since $ 0\prec D \preceq I$, we have
\begin{equation}\label{eq:pf_tf_1}
\begin{aligned}
&\norm{DX-\gamma\nabla f(X)-DY+\gamma\nabla f(Y)}^2\\
& \leq \norm{DX-\gamma\nabla f(X)-DY+\gamma\nabla f(Y)}^2_{D^{-1}} \\
&=\norm{X-Y}^2_{D}-2\gamma\innprod{X-Y}{\nabla f(X)-\nabla f(Y)}\\
&~~~+\gamma^2\norm{\nabla f(X)-\nabla f(Y)}^2_{D^{-1}}.
\end{aligned}
\end{equation}
Then we proceed to lower bound $\innprod{X-Y}{\nabla f(X)-\nabla f(Y)}.$
Let $\x' =\sqrt{D}\x, ~\tilde{f}(\x)=f(\sqrt{D^{-1}}\x)$.  Given any two points $X,Y\in\mathbb{R}^{m\times d}$, we have
\[
\begin{aligned}
&\innprod{X-Y}{\nabla f(X)-\nabla f(Y)}\\
&=\innprod{\sqrt{D^{-1}}X'-\sqrt{D^{-1}}Y'}{\nabla f(\sqrt{D^{-1}}X')-\nabla f(\sqrt{D^{-1}}Y')}\\
&=\innprod{X'-Y'}{\nabla \tilde{f}(X')-\nabla \tilde{f}(Y')}\\
&\overset{(*)}{\geq}\frac{L'\mu'}{L'+\mu'}\norm{X'-Y'}^2+\frac{1}{L'+\mu'}\norm{\nabla \tilde{f}(X')-\nabla \tilde{f}(Y')}^2\\
&=\frac{L'\mu'}{L'+\mu'}\norm{X-Y}^2_{D}+\frac{1}{L'+\mu'}\norm{\nabla f(X)-\nabla f(Y)}^2_{D^{-1}}
\end{aligned}
\]
where $(*)$ is due to \cite[Theorem~2.1.12]{nesterov2013introductory}, with $L' = \frac{L}{\lambda_{\min}(D)}$ and $\mu' = \frac{\mu}{\lambda_{\max}(D)}.$ Thus, knowing that $0<\gamma \leq \frac{2\lambda_{\min}(D)}{L+\mu \cdot \eta(D)} =\frac{2}{L'+\mu'}$ and continuing from \eqref{eq:pf_tf_1}, we have
\[
\begin{aligned}
&\norm{DX-\gamma\nabla f(X)-DY+\gamma\nabla f(Y)}^2\\
&\leq \left( 1- 2\gamma \frac{L'\mu'}{L'+\mu'} \right)\norm{X-Y}^2_{D} \\
& ~~~ -(\frac{2\gamma}{L'+\mu'}-\gamma^2)\norm{\nabla f(X)-\nabla f(Y)}^2_{D^{-1}}\\
&\leq \left( 1- 2\gamma \frac{L'\mu'}{L'+\mu'} \right)\norm{X-Y}^2_{D}.
\end{aligned}
\]
In particular, if we set $\gamma=\gamma^\star$, we have $1- 2\gamma^\star \frac{L'\mu'}{L'+\mu'} = \left( \frac{L'-\mu'}{L'+\mu'}\right)^2= \left( \frac{\kappa-\eta(D)}{\kappa+\eta(D)} \right)^2$.

\subsection{Proof of Theorem \ref{thm:contraction_T_c_T_f}}
This proof is similar to that of Theorem~\ref{thm:contraction_T_c_T_f_T_B}, except that in the following chain of inequalities, we need to tackle the additional operator $T_g.$  For $\forall \, X, Y\in\mathbb{R}^{2m\times d}$,  $X_\ell,Y_\ell \in \Span{\sqrt{C}}$,
\begin{align*}
& \norm{T\,X- T\,Y}_{\Lambda_{I-C}}^2 \\
&= \norm{T_c \circ T_f \circ T_g \circ T_B\,(X)- T_c \circ T_f \circ T_g \circ T_B\,(Y)}_{\Lambda_{I-C}}^2 \\
& \stackrel{Lm.~\ref{lem:contraction_T_c}} {=}\norm{T_f \circ T_g \circ T_B\,(X)- T_f \circ T_g \circ T_B\,(Y)}_{V_{I-C}}^2 \\
& \stackrel{Lm.~\ref{lem:contraction_T_f}}{\leq} \norm{ T_g \circ T_B\,(X)- T_g \circ T_B\,(Y)}_{V_{I-C} \Lambda_{{q(D,\gamma)}^2 D}}^2 \\ 
& ~~\, \leq ~~ \norm{ T_g \circ T_B\,(X)- T_g \circ T_B\,(Y)}_{V_{I-C} \Lambda_{{q(D,\gamma)}^2 I}}^2 \\ 
&  \stackrel{Lm.~\ref{lem:contraction_T_g}}{\leq} \norm{ T_B\,(X)- T_B\,(Y)}_{V_{I-C} \Lambda_{{q(D,\gamma)}^2 I} }^2 \\
& \stackrel{Lm.~\ref{lem:contraction_T_B}}{=} \norm{X- Y}_{V_{I-C} \,\Lambda_{{q(D,\gamma)}^2 B^2} }^2 \stackrel{(*)}{\leq} \delta \, \norm{X- Y}_{\Lambda_{I-C}}^2,
\end{align*}
where (*) is due to: i) for all $\,(Z)_u \in \mathbb{R}^{m\times d},$
\begin{align*}
& \|(Z)_u\|^2_{B^2}= \|(I-C)^{\frac{1}{2}}(Z)_u\|^2_{B^2(I-C)^{-1}}  \\
&  \leq  \lambda_{\text{max}} (B^2(I-C)^{-1})\|(I-C)^{\frac{1}{2}}(Z)_u\|^2  \\
& =\lambda_{\text{max}} (B^2(I-C)^{-1})\norm{(Z)_u}^2_{I-C} ; 
\end{align*}
and ii) $X_\ell,Y_\ell \in \Span{\sqrt{C}}$. 

\section{Supporting Proofs of Sublinear Convergence Rate}

\subsection{Proof of Lemma~\ref{lem:basic_inequality_I_sublinear}}\label{sec:pf_ine_I}

Since $f$ is $L$-smooth, we have
\begin{equation}\label{eq:L_smooth_inequalitY}
\begin{aligned}
& f(X^{k+1})\\
& \leq f(X^k)+\innprod{\nabla f(X^k)}{X^{k+1}-X^k}+\frac{L}{2}\norm{X^{k+1}-X^k}^2 \\
&\overset{(a)}{\leq} f(X) + \innprod{\nabla f(X^k)}{X^k-X} + \innprod{\nabla f(X^k)}{X^{k+1}-X^k} \\
& ~~~ +\frac{L}{2}\norm{X^{k+1}-X^k}^2 \\
& = f(X)+\innprod{\nabla f(X^k)}{X^{k+1}-X}+\frac{L}{2}\norm{X^{k+1}-X^k}^2.
\end{aligned}
\end{equation}
where $(a)$ is due to the fact that $f(\x)\geq f(\x^k)+\innprod{\nabla f(\x^k)}{\x-\x^k}$ from the convexity of $f$. 

Then, we relate the gradient term $\nabla f(X^k)$ to other quantities using \eqref{eq:ABC_alg_equi_z}  as follows
\[
\begin{aligned}
& \innprod{\nabla f(X^k)}{X^{k+1}-X} =-\frac{1}{\gamma}\innprod{ \underline{X}^{k+1}}{X^{k+1}-X}\\
&~~~+\frac{1}{\gamma}\innprod{DX^k-\gamma \underline{Y}^{k}}{X^{k+1}-X}\\
& = -\frac{1}{\gamma}\innprod{(I-C)\underline{X}^{k+1}}{X^{k+1}-X}\\
&~~~+\frac{1}{\gamma}\innprod{DX^k-\gamma \underline{Y}^{k+1}}{X^{k+1}-X},
\end{aligned}
\]
where we have used \eqref{eq:ABC_alg_equi_Y} to obtain the last relation.
Now, substituting the above relation into \eqref{eq:L_smooth_inequalitY}, we further have
\begin{align}
\begin{split}
& f(X^{k+1})\leq f(X)-\frac{1}{\gamma}\innprod{(I-C)\underline{X}^{k+1}}{X^{k+1}-X}\\
&~~~~~ +\frac{1}{\gamma}\innprod{D X^k}{X^{k+1}-X}-\innprod{\underline{Y}^{k+1}}{X^{k+1}-X} \\
&~~~~~ +\frac{ L}{2}\norm{X^{k+1}-X^k}^2
\end{split}
\end{align}
Adding $\innprod{Y}{X^{k+1}-X}$, with $X\in\Span{\ones}$ and $Y\in \Span{C}$, to both sides of the above equation and noticing $(C+bJ)^{-1}C = I- J$  yields
\begin{align*}
& \phi(X^{k+1},Y)\leq \phi(X,Y)-\frac{1}{\gamma}\innprod{(I- C)\underline{X}^{k+1}}{B(\underline{X}^{k+1}-X)}\\
&~~~+\frac{1}{\gamma}\innprod{D B\underline{X}^k}{B(\underline{X}^{k+1}-X)} +\frac{L}{2}\norm{X^{k+1}-X^k}^2\\
&~~~-\innprod{(C+2J)^{-1}C(\underline{Y}^{k+1}-Y)}{B(\underline{X}^{k+1}-X)}\\
& = \phi(X,Y)-\frac{1}{\gamma}\innprod{(I- C)\underline{X}^{k+1}}{B(\underline{X}^{k+1}-X)}\\
& ~~~ +\frac{1}{\gamma}\innprod{D B\underline{X}^k}{B(\underline{X}^{k+1}-X)} +\frac{L}{2}\norm{X^{k+1}-X^k}^2 \\
&~~~-\innprod{\underline{Y}^{k+1}-Y}{C\underline{X}^{k+1}}_{B'} \\
& = \phi(X,Y)-\frac{1}{\gamma}\innprod{(I-C-DB)\underline{X}^{k+1}}{B(\underline{X}^{k+1}-X)} \\
& ~~~ +\frac{1}{\gamma}\innprod{D B(\underline{X}^k-\underline{X}^{k+1})}{B(\underline{X}^{k+1}-X)}\\
&~~~-\gamma\innprod{\underline{Y}^{k+1}-Y}{\underline{Y}^{k+1}-\underline{Y}^k}_{B'}+\frac{L}{2}\norm{X^{k+1}-X^k}^2,
\end{align*}
where we have used \eqref{eq:ABC_alg_equi_Y} to obtain the last relation.
Knowing that $\x=\B\underline{\x}$ from \eqref{eq:ABC_alg_equi_X}, we complete the proof.

\subsection{Proof of Lemma~\ref{lem:basic_inequality_II_sublinear}}
Invoking Lemma~\ref{lem:basic_inequality_I_sublinear} and using the identity 
\[
2\innprod{a-b}{a-c}=\norm{a-b}^2-\norm{b-c}^2+\norm{a-c}^2,
\]
we have that
\begin{align}
& \phi(X^{k+1},Y) \notag\\
& \leq \phi(X,Y)-\frac{1}{2\gamma}\left(\norm{X^{k+1}-X}^2_{D}-\norm{X^k-X}^2_{D} \right) \notag\\
& ~~~ -\frac{1}{\gamma}\norm{\underline{X}^{k+1}}_{B-BC-AB}^2 -\norm{X^{k+1}-X^k}_{\frac{1}{2\gamma}D - \frac{L}{2}I }^2 \notag\\
&~~~-\frac{\gamma}{2}(\norm{\underline{Y}^{k+1}-Y}^2_{B'}-\norm{\underline{Y}^k-Y}^2_{B'}+\norm{\underline{Y}^{k+1}-\underline{Y}^k}^2_{B'})\notag\\
&\overset{(a)}{=} \phi(X,Y)-\frac{1}{2\gamma}\Big(\norm{X^{k+1}-X}^2_D-\norm{X^k-X}^2_D\Big)\notag\\
& ~~~ -\frac{1}{\gamma}\norm{\underline{X}^{k+1}}_{ B- \frac{1}{2}BC-A B}^2 -\norm{X^{k+1}-X^k}_{\frac{1}{2\gamma}D - \frac{L}{2}I }^2\notag\\
&~~~-\frac{\gamma}{2}\Big(\norm{\underline{Y}^{k+1}-Y}^2_{B'}-\norm{\underline{Y}^k-Y}^2_{B'}\Big) \notag\\
&\overset{(b)}{ \leq} \phi(X,Y)-\frac{1}{2\gamma}\Big(\norm{X^{k+1}-X}^2_D-\norm{X^k-X}^2_D\Big)\notag\\
&~~~-\frac{\gamma}{2}\Big(\norm{\underline{Y}^{k+1}-Y}^2_{B'}-\norm{\underline{Y}^k-Y}^2_{B'}\Big) \label{eq:phi_k}
\end{align}
where $(a)$ is due to the fact that $\norm{\underline{Y}^{k+1}-\underline{Y}^k}^2_{B'}=\frac{1}{\gamma^2 }\norm{\underline{X}^{k+1}}^2_{BC}$ since $\underline{Y}^{k+1}-\underline{Y}^k=1/\gamma C\underline{X}^{k+1}$ and $B' C^2 = (C+bJ)^{-1}C^2B = CB$; $(b)$ comes from that $\gamma\leq \frac{\lambda_{\min}(D)}{L}$ and $B-\frac{1}{2}BC-AB = \sqrt{B}\left(I - \frac{1}{2}C - \sqrt{B}D \sqrt{B} \right)\sqrt{B}  \succeq 0.$

Then, averaging \eqref{eq:phi_k} over $k$ from $0$ to $t-1$, we have
\begin{align}\label{eq:ave_ine}
\begin{split}
& \frac{1}{t}\sum_{k=0}^{t-1}\Big(\phi(X^{k+1},Y)-\phi(X,Y)\Big)\\
&\leq-\frac{1}{2\gamma t} \left(\norm{X^t-X}^2_D-\norm{X^0-X}^2_D \right)\\
& ~~~ -\frac{\gamma}{2 t} \left(\norm{\underline{Y}^t-Y}^2_{B'}-\norm{\underline{Y}^0-Y}^2_{B'}\right)\\
&  \overset{(a)}{\leq} \frac{1}{2t}\Big(\frac{1}{\gamma}\norm{X^0-X}^2_D+ \gamma \frac{1}{ \lambda_2(C)}\norm{Y}^2_B\Big) \\
&  \overset{(b)}{=} \frac{1}{2t}\Big(\frac{1}{\gamma}\norm{X^0-X}^2_D+ \gamma \frac{1}{ \lambda_2(C)}\norm{Y}^2_{B-J}\Big) \\
&  \leq \frac{1}{2t}\Big(\frac{1}{\gamma}\norm{X^0-X}^2_D+ \gamma \frac{\rho(B-J)}{ \lambda_2(C)}\norm{Y}^2 \Big)
\end{split}
\end{align}
where we used: (a) $Y^0 = 0$ and $\lambda_{\max}\left((C+bJ)^{-1}\right) = 1/ \lambda_{\min}\left(C+bJ\right)= 1/\lambda_2(C)$ due to $C \preceq 2I$; (b) $Y\in\Span{\ones}^{\perp}$.  
Using the convexity of $\phi$ we complete the proof.

\subsection{Proof of Theorem~\ref{thm:sublinear_prox}}
Setting $B=I$, Algorithm~\eqref{alg:ABC_sublinear_prox} we study becomes
\begin{equation}\label{eq:ABC_prox}
\begin{aligned}
\underline{X}^{k+1}&= DX^k-\gamma ( \nabla f(X^k)+\underline{Y}^k), \\
X^{k+1} & = \prox{\gamma g}{\underline{X}^{k+1}},\\
\underline{Y}^{k+1}&=\underline{Y}^k+\frac{1}{\gamma} CX^{k+1}.
\end{aligned}
\end{equation}
The structure of this proof is similar to the proof of Theorem \ref{thm:unified_alg_sublinear_no_reg}.  We first establish two fundamental inequalities that are valid for any pair $(X,Y)$ such that $X\in\Span{\ones}$ and $Y\in\Span{C}$ (cf. Lemma~\ref{lem:basic_inequality_I_sublinear_prox} and Lemma~\ref{lem:basic_inequality_II_sublinear_prox}); and then apply these results with $X = X^\star$ and two choices of $Y$ to get the result of the sublinear convergence and rate separation.

\begin{lem}\label{lem:basic_inequality_I_sublinear_prox}
	Consider the setting of Theorem~\ref{thm:sublinear_prox}, let $\{ X^k, \underline{X}^k, \underline{Y}^k\}_{k \in \mathbb{N}_+}$ be the sequence generated by Algorithm~\eqref{eq:ABC_prox} under Assumption~\ref{assum:sublinear_prox}.   Then for all $ \,X\in\Span{\ones}$ and $ Y\in\Span{C}$ it holds
	\[
	\begin{aligned}
	& \phi(X^{k+1},Y) \leq \phi(X,Y) +  \frac{1}{\gamma} \innprod{X^k - X^{k+1}}{X^{k+1}-X}_D \\
	& ~~~ -  \innprod{\underline{Y}^{k+1}-Y}{X^{k+1}-X} + \frac{L}{2}\norm{X^{k+1}-X^k}^2  \\
	& ~~~ - \frac{1}{\gamma} \norm{X^{k+1}}^2_{I-C-D}.
	\end{aligned}
	\]
\end{lem}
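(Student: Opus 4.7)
The proof will mirror that of Lemma~\ref{lem:basic_inequality_I_sublinear}, but it must accommodate the nonsmooth term $g$ and the fact that the proximal step in \eqref{alg:ABC_sublinear_prox} is applied \emph{after} the gradient step (rather than before, as in the $G=0$ analysis). The strategy is to combine three elementary inequalities: the descent inequality from $L$-smoothness of $f$, the subgradient inequality for $g$ at $X^{k+1}$ coming from the optimality condition of the proximal step, and the algebraic identity that expresses $\nabla f(X^{k})$ in terms of the remaining iterates via the dynamics in \eqref{eq:ABC_prox}. Throughout, I will use the shorthand $\phi(X,Y)=f(X)+g(X)+\innprod{Y}{X}$ adapted to the composite setting.

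First, I would combine $L$-smoothness of $f$ at $X^{k+1}$ with convexity of $f$ to get $f(X^{k+1})\leq f(X)+\innprod{\nabla f(X^{k})}{X^{k+1}-X}+\tfrac{L}{2}\|X^{k+1}-X^{k}\|^{2}$, for any $X$; exactly as in \eqref{eq:L_smooth_inequalitY}. Second, since $X^{k+1}=\text{prox}_{\gamma g}(\underline{X}^{k+1})$, the optimality condition yields $\underline{X}^{k+1}-X^{k+1}\in\gamma\,\partial g(X^{k+1})$, so convexity of $g$ gives $g(X^{k+1})\leq g(X)+\tfrac{1}{\gamma}\innprod{\underline{X}^{k+1}-X^{k+1}}{X^{k+1}-X}$. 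Adding the two bounds and then $\innprod{Y}{X^{k+1}-X}$ to both sides produces an inequality whose right-hand side contains $\innprod{\nabla f(X^{k})}{X^{k+1}-X}$ together with the proximal remainder.

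Next, I would eliminate $\nabla f(X^{k})$ using the first line of \eqref{eq:ABC_prox}, namely $\gamma\nabla f(X^{k})=DX^{k}-\underline{X}^{k+1}-\gamma\underline{Y}^{k}$, and then substitute $\underline{Y}^{k}=\underline{Y}^{k+1}-\tfrac{1}{\gamma}CX^{k+1}$ from the dual update. Collecting terms yields the combination
\[
\tfrac{1}{\gamma}\innprod{DX^{k}-X^{k+1}}{X^{k+1}-X}+\tfrac{1}{\gamma}\innprod{CX^{k+1}}{X^{k+1}-X}-\innprod{\underline{Y}^{k+1}-Y}{X^{k+1}-X},
\]
where the proximal remainder has already merged with the $-\underline{X}^{k+1}/\gamma$ contribution. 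Splitting $DX^{k}-X^{k+1}=D(X^{k}-X^{k+1})-(I-D)X^{k+1}$ isolates the desired quantity $\tfrac{1}{\gamma}\innprod{X^{k}-X^{k+1}}{X^{k+1}-X}_{D}$, and the remaining pieces collapse into $-\tfrac{1}{\gamma}\|X^{k+1}\|^{2}_{I-C-D}$ \emph{provided} the cross terms $\innprod{(I-D)X^{k+1}}{X}$ and $\innprod{CX^{k+1}}{X}$ vanish for $X\in\Span{\ones}$.

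The main (mild) obstacle is verifying these two cross terms. That $\innprod{CX^{k+1}}{X}=0$ follows immediately from Assumption~\ref{assum:sublinear_prox}iii). For $\innprod{(I-D)X^{k+1}}{X}=0$, I would show $D\ones=\ones$ as a consequence of the assumptions: Assumption~\ref{assum:sublinear_prox}iv) combined with $C\ones=0$ gives $\ones^{\top}(I-C/2-D)\ones=m-\ones^{\top}D\ones=0$ by Assumption~\ref{assum:sublinear_prox}ii); since $I-C/2-D\succeq 0$, the null-space characterization of PSD matrices forces $(I-C/2-D)\ones=0$, and using $C\ones=0$ once more yields $D\ones=\ones$. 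With this, both cross terms vanish on $\Span{\ones}$, the algebra closes, and the claimed bound follows. I expect the rest of the manipulation to be routine once this spectral observation has been recorded.
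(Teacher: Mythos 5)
Your proposal is correct and follows essentially the same route as the paper's proof: the same three ingredients (smoothness plus convexity of $f$, the prox subgradient inequality $g(X^{k+1})-g(X)\leq\tfrac{1}{\gamma}\innprod{\underline{X}^{k+1}-X^{k+1}}{X^{k+1}-X}$, and substitution of the primal and dual updates of \eqref{eq:ABC_prox}), followed by the same regrouping and the observation that the cross terms vanish on $\Span{\ones}$. The only difference is that you explicitly verify $D\ones=\ones$ via the PSD null-space argument applied to $I-\tfrac{C}{2}-D$ together with $\ones^\top D\ones=m$ and $C\ones=0$ — a detail the paper uses implicitly when it writes $-\tfrac{1}{\gamma}\norm{X^{k+1}}^2_{I-C-D}$ — and your derivation of it is valid.
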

\begin{proof}
The  proof is similar to that of Lemma~\ref{lem:basic_inequality_I_sublinear}.
\begin{align*}
& f(X^{k+1})\\
& \leq f(X)+\innprod{\nabla f(X^k)}{X^{k+1}-X}+\frac{L}{2}\norm{X^{k+1}-X^k}^2 \\
& =  f(X) + \frac{1}{\gamma} \innprod{DX^k}{X^{k+1}-X} -  \innprod{\underline{Y}^{k+1}}{X^{k+1}-X} \\
& ~~~ - \frac{1}{\gamma} \innprod{\underline{X}^{k+1}-C X^{k+1}}{X^{k+1}-X}+\frac{L}{2}\norm{X^{k+1}-X^k}^2 \\
& =  f(X) +  \frac{1}{\gamma} \innprod{D(X^k - X^{k+1})}{X^{k+1}-X} \\
& ~~~ -  \innprod{\underline{Y}^{k+1}}{X^{k+1}-X} + \frac{L}{2}\norm{X^{k+1}-X^k}^2  \\
& ~~~ - \frac{1}{\gamma} \innprod{\underline{X}^{k+1}-(C+D) X^{k+1}}{X^{k+1}-X}  .
\end{align*}
According to $X^{k+1} = \prox{\gamma g}{\underline{X}^{k+1}}$, we have $g(X^{k+1}) - g(X) \leq \frac{1}{\gamma} \innprod{ \underline{X}^{k+1} - X^{k+1}}{X^{k+1}-X}.$  We define $\phi(X,Y) = f(X) + g(X)+  \innprod{X}{Y}.$  Then we have for $X\in\Span{\ones}$ and $Y\in \Span{C}$,
\begin{align}\label{eq:prox_sublinear_I}
& \phi(X^{k+1},Y) \leq \phi(X,Y) +  \frac{1}{\gamma} \innprod{D(X^k - X^{k+1})}{X^{k+1}-X} \notag\\
& ~~~ -  \innprod{\underline{Y}^{k+1}-Y}{X^{k+1}-X} + \frac{L}{2}\norm{X^{k+1}-X^k}^2  \notag\\
& ~~~ - \frac{1}{\gamma} \innprod{(I-C-D) X^{k+1}}{X^{k+1}-X} \notag\\
& =\phi(X,Y) +  \frac{1}{\gamma} \innprod{X^k - X^{k+1}}{X^{k+1}-X}_D \notag\\
& ~~~ -  \innprod{\underline{Y}^{k+1}-Y}{X^{k+1}-X} + \frac{L}{2}\norm{X^{k+1}-X^k}^2  \notag\\
& ~~~ - \frac{1}{\gamma} \norm{X^{k+1}}^2_{I-C-D}
\end{align}
\end{proof}

\begin{lem}\label{lem:basic_inequality_II_sublinear_prox}
	Under the same conditions as Lemma~\ref{lem:basic_inequality_I_sublinear_prox}, if $\gamma\leq \frac{\lambda_{\min}(D)}{L}$, then for all $X\in\Span{\ones}$ and $Y\in\Span{C}$ it holds
	\begin{equation}\label{eq:ineq_prox_i}
	\begin{aligned}
	& \phi(\widehat{X}^{t},Y)-\phi(X,Y)\\
	& \leq \frac{1}{2t}\Big(\frac{1}{\gamma}\norm{X^0-X}^2_D+\gamma\frac{1}{\lambda_2(C)}\norm{Y}^2\Big)  .
	\end{aligned}
	\end{equation}
\end{lem}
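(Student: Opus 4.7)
The plan is to mimic the structure of the proof of Lemma~\ref{lem:basic_inequality_II_sublinear}, starting from the single-step bound in Lemma~\ref{lem:basic_inequality_I_sublinear_prox} and massaging each term into either a telescoping difference or a quantity that can be discarded using the hypotheses. First I would apply the polarization identity $2\innprod{a-b}{a-c}_M = \norm{a-b}^2_M + \norm{a-c}^2_M - \norm{b-c}^2_M$ (with $M=D$) to the term $\tfrac{1}{\gamma}\innprod{X^k-X^{k+1}}{X^{k+1}-X}_D$. This turns it into $\tfrac{1}{2\gamma}(\norm{X^k-X}^2_D-\norm{X^{k+1}-X}^2_D)$ (a telescoping piece) minus $\tfrac{1}{2\gamma}\norm{X^{k+1}-X^k}^2_D$. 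Combined with the $\tfrac{L}{2}\norm{X^{k+1}-X^k}^2$ term already in the bound, the quadratic in $X^{k+1}-X^k$ becomes $-\norm{X^{k+1}-X^k}^2_{D/(2\gamma)-(L/2)I}$, which is nonpositive under the stepsize condition $\gamma\leq\lambda_{\min}(D)/L$.

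Next I would handle the dual inner product $\innprod{\underline{Y}^{k+1}-Y}{X^{k+1}-X}$. Since $Y^0=0$ and the dual update only adds multiples of $CX$, both $\underline{Y}^{k+1}$ and $Y$ lie in $\Span{C}\subset\Span{\ones}^{\perp}$, so the $X\in\Span{\ones}$ piece drops and we can replace $X^{k+1}$ by $(I-J)X^{k+1}$. Working on $\Span{\ones}^{\perp}$, where $C$ is invertible with smallest eigenvalue $\lambda_2(C)$, the dual update $\underline{Y}^{k+1}-\underline{Y}^k=\tfrac{1}{\gamma}CX^{k+1}$ lets me write $(I-J)X^{k+1}=\gamma C^{\dagger}(\underline{Y}^{k+1}-\underline{Y}^k)$ (equivalently, introduce $(C+bJ)^{-1}$ as in the smooth case). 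Applying the polarization identity a second time in the $C^{\dagger}$-weighted inner product yields a telescoping piece $\tfrac{\gamma}{2}(\norm{\underline{Y}^k-Y}^2_{C^{\dagger}}-\norm{\underline{Y}^{k+1}-Y}^2_{C^{\dagger}})$ plus $\tfrac{1}{2\gamma}\norm{X^{k+1}}^2_C$. Absorbing the last piece into the remaining $-\tfrac{1}{\gamma}\norm{X^{k+1}}^2_{I-C-D}$ produces $-\tfrac{1}{\gamma}\norm{X^{k+1}}^2_{I-C/2-D}$, which is nonpositive precisely by Assumption~\ref{assum:sublinear_prox}(iv).

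What remains is a clean telescoping bound: summing over $k=0,\dots,t-1$, dividing by $t$, dropping the negative terminal $D$- and $C^{\dagger}$-weighted norms, and using $\underline{Y}^0=0$ gives
$\tfrac{1}{t}\sum_{k=0}^{t-1}(\phi(X^{k+1},Y)-\phi(X,Y))\leq \tfrac{1}{2t}(\tfrac{1}{\gamma}\norm{X^0-X}^2_D+\gamma\norm{Y}^2_{C^{\dagger}})$. Since $Y\in\Span{C}$, we have $\norm{Y}^2_{C^{\dagger}}\leq \tfrac{1}{\lambda_2(C)}\norm{Y}^2$, and the convexity of $\phi(\cdot,Y)$ together with $\widehat{X}^t=\tfrac{1}{t}\sum_{k=1}^{t}X^k$ upgrades the arithmetic mean on the left to $\phi(\widehat{X}^t,Y)$, yielding \eqref{eq:ineq_prox_i}.

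The main obstacle I expect is the bookkeeping in the dual term: unlike in Lemma~\ref{lem:basic_inequality_II_sublinear}, the inner product $\innprod{\underline{Y}^{k+1}-Y}{X^{k+1}-X}$ appears with $X^{k+1}$ (the post-prox iterate) rather than $\underline{X}^{k+1}$, so the clean identification via $(C+bJ)^{-1}$ needs the projection argument onto $\Span{\ones}^{\perp}$ to push through cleanly. Once that reduction is in place, the cancellation $-\tfrac{1}{\gamma}\norm{X^{k+1}}^2_{I-C/2-D}\leq 0$ under Assumption~\ref{assum:sublinear_prox}(iv) is what ultimately justifies choosing $B=I$ in the proximal variant and explains why the rate constant $\tfrac{1}{\lambda_2(C)}$ replaces the $\tfrac{\rho(B-J)}{\lambda_2(C)}$ of the smooth case.
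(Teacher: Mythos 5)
Your proof is correct and follows essentially the same route as the paper's: polarization of the $D$-weighted and dual-weighted inner products into telescoping differences, discarding the nonpositive remainders $-\norm{X^{k+1}-X^k}^2_{\frac{1}{2\gamma}D-\frac{L}{2}I}$ and $-\tfrac{1}{\gamma}\norm{X^{k+1}}^2_{I-C/2-D}$ via the stepsize bound and Assumption~\ref{assum:sublinear_prox}(iv), then averaging and invoking convexity of $\phi(\cdot,Y)$ (the paper works with the weight $(J+C)^{-1}$ where you use $C^{\dagger}$ restricted to $\Span{\ones}^{\perp}$, which coincides with it on the subspace where all the relevant vectors live). The only blemish is a sign slip: the extra piece produced by the dual polarization is $-\tfrac{1}{2\gamma}\norm{X^{k+1}}^2_C$, not $+\tfrac{1}{2\gamma}\norm{X^{k+1}}^2_C$, and your final absorbed form $-\tfrac{1}{\gamma}\norm{X^{k+1}}^2_{I-C/2-D}$ is precisely the one consistent with the correct negative sign, so the argument stands as intended.
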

\begin{proof}
Continuing from \eqref{eq:prox_sublinear_I}, we have
\begin{align*}
& \phi(X^{k+1},Y) \\
& \leq \phi(X,Y)  - \frac{1}{2\gamma} \left( \norm{X^{k+1}-X}_D^2 -  \norm{X^{k}-X}_D^2\right) \\ 
& ~~~ -  \innprod{\underline{Y}^{k+1}-Y}{CX^{k+1}}_{(J+C)^{-1}} - \norm{X^{k+1}-X^k}^2_{\frac{1}{2\gamma}D-\frac{L}{2}} \\
& ~~~ - \frac{1}{\gamma} \norm{X^{k+1}}_{I-C-D}\\
& = \phi(X,Y)  - \frac{1}{2\gamma} \left( \norm{X^{k+1}-X}_D^2 -  \norm{X^{k}-X}_D^2\right) \\
& ~~~ -\frac{\gamma}{2} \left(\norm{\underline{Y}^{k+1}-Y}^2_{(J+C)^{-1}} - \norm{\underline{Y}^{k}-Y}^2_{(J+C)^{-1}} \right) \\
& ~~~  - \frac{1}{\gamma} \norm{X^{k+1}}_{I-\frac{C}{2}-D}  - \norm{X^{k+1}-X^k}^2_{\frac{1}{2\gamma}D-\frac{L}{2}} \\
& \leq \phi(X,Y)  - \frac{1}{2\gamma} \left( \norm{X^{k+1}-X}_D^2 -  \norm{X^{k}-X}_D^2\right) \\
& ~~~ -\frac{\gamma}{2} \left(\norm{\underline{Y}^{k+1}-Y}^2_{(J+C)^{-1}} - \norm{\underline{Y}^{k}-Y}^2_{(J+C)^{-1}} \right),
\end{align*}
where the last step is due to that $I-\frac{C}{2}-D \succeq 0$ and $\gamma\leq \frac{\lambda_{\min}(D)}{L}.$  
Then, averaging the above over $k$ from $0$ to $t-1$, we have
\[
\begin{aligned}
& \frac{1}{t}\sum_{k=0}^{t-1}\Big(\phi(X^{k+1},Y)-\phi(X,Y)\Big)\\
&\leq-\frac{1}{2\gamma t} \left(\norm{X^t-X}^2_D-\norm{X^0-X}^2_D \right)\\
& ~~~ -\frac{\gamma}{2 t} \left(\norm{\underline{Y}^t-Y}^2_{(J+C)^{-1}}-\norm{\underline{Y}^0-Y}^2_{(J+C)^{-1}}\right)\\
& \leq \frac{1}{2t}\Big(\frac{1}{\gamma}\norm{X^0-X}^2_D+ \gamma \frac{1}{ \lambda_2(C)}\norm{Y}^2\Big) .
\end{aligned}
\]
Using the convexity of $\phi$  completes the proof.
\end{proof}

For notational simplicity, we set $r(X) = f(X)+g(X).$
From \eqref{eq:ineq_prox_i}, we have
\begin{equation}\label{eq:ineq_prox_ii}
\begin{aligned}
& \phi(\widehat{X}^{t},Y)-\phi(X^\star,Y)  = r(\widehat{X}^{t})  - r(X^\star)- \innprod{\widehat{X}^{t} - X^\star}{Y}\\ 
& =   r(\widehat{X}^{t})   - r(X^\star)- \innprod{\widehat{X}^{t} }{Y}  \leq h(\norm{Y}),
\end{aligned}
\end{equation}
where $h(\cdot)=\frac{1}{2t}\Big(\frac{1}{\gamma}\norm{X^0-X^\star}^2_D+\gamma\frac{1}{\lambda_2(C)}(\cdot)^2\Big)$.
Now setting $Y=-2\frac{(I-J)\widehat{X}^t}{\norm{(I-J)\widehat{X}^t}}\norm{Y^\star }$.  The rest of the proof is similar to that in Theorem \ref{thm:unified_alg_sublinear_no_reg}.



\bibliographystyle{IEEEtran}
\bibliography{IEEEabrv,reference}

\begin{thebibliography}{10}
\providecommand{\url}[1]{#1}
\csname url@samestyle\endcsname
\providecommand{\newblock}{\relax}
\providecommand{\bibinfo}[2]{#2}
\providecommand{\BIBentrySTDinterwordspacing}{\spaceskip=0pt\relax}
\providecommand{\BIBentryALTinterwordstretchfactor}{4}
\providecommand{\BIBentryALTinterwordspacing}{\spaceskip=\fontdimen2\font plus
\BIBentryALTinterwordstretchfactor\fontdimen3\font minus
  \fontdimen4\font\relax}
\providecommand{\BIBforeignlanguage}[2]{{%
\expandafter\ifx\csname l@#1\endcsname\relax
\typeout{** WARNING: IEEEtran.bst: No hyphenation pattern has been}%
\typeout{** loaded for the language `#1'. Using the pattern for}%
\typeout{** the default language instead.}%
\else
\language=\csname l@#1\endcsname
\fi
#2}}
\providecommand{\BIBdecl}{\relax}
\BIBdecl

\bibitem{XuSunScutariJ}
J.~Xu, Y.~Sun, Y.~Tian, and G.~Scutari, ``A unified contraction analysis of a
  class of distributed algorithms for composite optimization,'' \emph{IEEE
  International Workshop on Computational Advances in Multi-Sensor Adaptive
  Processing (CAMSAP)}, 2019.

\bibitem{XUCAMSAP_arxiv}
------, ``A unified contraction analysis of a class of distributed algorithms
  for composite optimization,'' \emph{arXiv:1910.09817}, Oct. 2019.

\bibitem{shi2015extra}
W.~Shi, Q.~Ling, G.~Wu, and W.~Yin, ``{EXTRA}: An exact first-order algorithm
  for decentralized consensus optimization,'' \emph{SIAM Journal on
  Optimization}, vol.~25, no.~2, pp. 944--966, 2015.

\bibitem{xu2015augmented}
J.~Xu, S.~Zhu, Y.~C. Soh, and L.~Xie, ``Augmented distributed gradient methods
  for multi-agent optimization under uncoordinated constant stepsizes,'' in
  \emph{Proceedings of the 54th IEEE Conference on Decision and Control}, 2015,
  pp. 2055--2060.

\bibitem{di2016next}
P.~Di~Lorenzo and G.~Scutari, ``Next: In-network nonconvex optimization,''
  \emph{IEEE Transactions on Signal and Information Processing over Networks},
  vol.~2, no.~2, pp. 120--136, 2016.

\bibitem{qu2017harnessing}
G.~Qu and N.~Li, ``Harnessing smoothness to accelerate distributed
  optimization,'' \emph{IEEE Transactions on Control of Network Systems},
  vol.~5, no.~3, pp. 1245--1260, 2017.

\bibitem{YingMAPR}
G.~Scutari and Y.~Sun, ``Distributed nonconvex constrained optimization over
  time-varying digraphs,'' \emph{Mathematical Programming}, vol. 176, no. 1--2,
  pp. 497--544, July 2019.

\bibitem{sun2019convergence}
Y.~Sun, A.~Daneshmand, and G.~Scutari, ``Convergence rate of distributed
  optimization algorithms based on gradient tracking,''
  \emph{arXiv:1905.02637}, 2019.

\bibitem{nedich2016achieving}
A.~Nedich, A.~Olshevsky, and W.~Shi, ``Achieving geometric convergence for
  distributed optimization over time-varying graphs,'' \emph{SIAM Journal on
  Optimization}, vol.~27, no.~4, pp. 2597--2633, 2017.

\bibitem{li2017decentralized}
Z.~Li, W.~Shi, and M.~Yan, ``A decentralized proximal-gradient method with
  network independent step-sizes and separated convergence rates,'' \emph{IEEE
  Transactions on Signal Processing}, vol.~67, no.~17, pp. 4494--4506, 2019.

\bibitem{yuan2018exact_p1}
K.~Yuan, B.~Ying, X.~Zhao, and A.~H. Sayed, ``Exact diffusion for distributed
  optimization and learning? part i: Algorithm development,'' \emph{IEEE
  Transactions on Signal Processing}, vol.~67, no.~3, pp. 708--723, 2018.

\bibitem{scaman17optimal}
K.~Scaman, F.~Bach, S.~Bubeck, Y.~T. Lee, and L.~Massouli{\'e}, ``Optimal
  algorithms for smooth and strongly convex distributed optimization in
  networks,'' in \emph{Proceedings of the 34th International Conference on
  Machine Learning}, 2017, pp. 3027--3036.

\bibitem{jakovetic2018unification}
D.~Jakoveti{\'c}, ``A unification and generalization of exact distributed
  first-order methods,'' \emph{IEEE Transactions on Signal and Information
  Processing over Networks}, vol.~5, no.~1, pp. 31--46, 2018.

\bibitem{mansoori2019general}
F.~Mansoori and E.~Wei, ``A general framework of exact primal-dual first order
  algorithms for distributed optimization,'' \emph{arXiv:1903.12601}, 2019.

\bibitem{wei2012distributed_admm}
E.~Wei and A.~E. Ozdaglar, ``Distributed alternating direction method of
  multipliers,'' in \emph{Proceedings of IEEE 51st Annual Conference on
  Decision and Control}, 2012, pp. 5445--5450.

\bibitem{alghunaim2019linearly}
S.~A. Alghunaim, K.~Yuan, and A.~H. Sayed, ``A linearly convergent proximal
  gradient algorithm for decentralized optimization,'' \emph{arXiv:1905.07996},
  2019.

\bibitem{shi2015proximal}
W.~Shi, Q.~Ling, G.~Wu, and W.~Yin, ``A proximal gradient algorithm for
  decentralized composite optimization,'' \emph{IEEE Transactions on Signal
  Processing}, vol.~63, no.~22, pp. 6013--6023, Nov 2015.

\bibitem{xu2018bregman}
J.~Xu, S.~Zhu, Y.~C. Soh, and L.~Xie, ``A {B}regman splitting scheme for
  distributed optimization over networks,'' \emph{IEEE Transactions on
  Automatic Control}, vol.~63, no.~11, pp. 3809--3824, 2018.

\bibitem{aybat2017distributed}
N.~S. Aybat, Z.~Wang, T.~Lin, and S.~Ma, ``Distributed linearized alternating
  direction method of multipliers for composite convex consensus
  optimization,'' \emph{IEEE Transactions on Automatic Control}, vol.~63,
  no.~1, pp. 5--20, 2017.

\bibitem{Scoy-canonical18}
A.~Sundararajan, B.~V. Scoy, and L.~Lessard, ``A canonical form for first-order
  distributed optimization algorithms,'' \emph{arXiv:1809.08709}, 2018.

\bibitem{Sundararajan17}
A.~Sundararajan, B.~Hu, and L.~Lessard, ``Robust convergence analysis of
  distributed optimization algorithms,'' in \emph{Proceedings of the 55th
  Annual Allerton Conference on Communication, Control, and Computing}, 2017,
  pp. 2740--2749.

\bibitem{xu2019accelerated}
J.~Xu, Y.~Tian, Y.~Sun, and G.~Scutari, ``Accelerated primal-dual algorithms
  for distributed smooth convex optimization over networks,'' in
  \emph{Proceedings of the 23rd International Conference on Artificial
  Intelligence and Statistics (AISTATS)}, 2020.

\bibitem{van2019distributed}
B.~Van~Scoy and L.~Lessard, ``Distributed optimization of nonconvex functions
  over time-varying graphs,'' \emph{arXiv:1905.11982}, 2019.

\bibitem{xu2017convergence}
J.~Xu, S.~Zhu, Y.~C. Soh, and L.~Xie, ``Convergence of asynchronous distributed
  gradient methods over stochastic networks,'' \emph{IEEE Transactions on
  Automatic Control}, vol.~63, no.~2, pp. 434--448, 2017.

\bibitem{nedic2017geometrically}
A.~Nedi{\'c}, A.~Olshevsky, W.~Shi, and C.~A. Uribe, ``Geometrically convergent
  distributed optimization with uncoordinated step-sizes,'' in \emph{2017
  American Control Conference}, 2017, pp. 3950--3955.

\bibitem{alghunaim2019decentralized}
S.~A. Alghunaim, E.~K. Ryu, K.~Yuan, and A.~H. Sayed, ``Decentralized proximal
  gradient algorithms with linear convergence rates,'' \emph{arXiv:1909.06479},
  2019.

\bibitem{yuan2018exact_p2}
K.~Yuan, B.~Ying, X.~Zhao, and A.~H. Sayed, ``Exact diffusion for distributed
  optimization and learning? part ii: Convergence analysis,'' \emph{IEEE
  Transactions on Signal Processing}, no.~3, pp. 724--739, 2018.

\bibitem{auzinger2011iterative}
A.~Wien, \emph{Iterative solution of large linear systems}.\hskip 1em plus
  0.5em minus 0.4em\relax Lecture Notes, TU Wien, 2011.

\bibitem{horn2012matrix}
R.~A. Horn and C.~R. Johnson, \emph{Matrix Analysis}.\hskip 1em plus 0.5em
  minus 0.4em\relax Cambridge University Press, 2012.

\bibitem{shamir2014communication}
O.~Shamir, N.~Srebro, and T.~Zhang, ``Communication-efficient distributed
  optimization using an approximate newton-type method,'' in \emph{Proceedings
  of International Conference on Machine Learning}, 2014, pp. 1000--1008.

\bibitem{agarwal2010fast}
A.~Agarwal, S.~Negahban, and M.~J. Wainwright, ``Fast global convergence rates
  of gradient methods for high-dimensional statistical recovery,'' in
  \emph{Proceedings of Advances in Neural Information Processing Systems},
  2010, pp. 37--45.

\bibitem{xiao2004fast}
L.~Xiao and S.~Boyd, ``Fast linear iterations for distributed averaging,''
  \emph{Systems \& Control Letters}, vol.~53, no.~1, pp. 65--78, 2004.

\bibitem{Dua:2019}
\BIBentryALTinterwordspacing
D.~Dua and C.~Graff, ``{UCI} machine learning repository,'' 2017. [Online].
  Available: \url{http://archive.ics.uci.edu/ml}
\BIBentrySTDinterwordspacing

\bibitem{nesterov2013introductory}
Y.~Nesterov, \emph{Introductory lectures on convex optimization: A basic
  course}.\hskip 1em plus 0.5em minus 0.4em\relax Springer Science \& Business
  Media, 2013, vol.~87.

\end{thebibliography}

\end{document}